\definecolor{refkey}{gray}{.75}
\definecolor{labelkey}{gray}{.75}
\theoremstyle{plain} 
\newtheorem{thm}{Theorem}[section]
\newtheorem{prop}[thm]{Proposition}
\newtheorem{lemma}[thm]{Lemma}
\newtheorem{cor}[thm]{Corollary}
\newtheorem{example}[thm]{Example}
\newtheorem{rmk}{Remark}
\theoremstyle{definition}
\theoremstyle{remark}
\numberwithin{equation}{section}
\newcommand{\mysectionname}{}
\newcommand{\newsection}[1]{\section{#1}\renewcommand{\mysectionname}{\uppercase{#1}}}
\newcommand{\MT}{\mathcal{M}_{\mathbb{T}}}  
\newcommand{\Mrealplus}{\mathcal{M}_{\mathbb{R}^+}}
\newcommand{\Mrealpplus}{\mathcal{M}_{\mathbb{R}^+}^*}
\newcommand{\g}[1]{\mathit{G}_{#1}}
\newcommand{\pp}[1]{\psi_{#1}}
\newcommand{\et}[1]{\eta_{#1} }
\begin{document}
\title{on the free convolution with a free multiplicative analogue of the normal distribution}
\author{Ping Zhong}
\address{Department of Mathematics, Rawles Hall, 831 East Third Street, Indiana University, Bloomington, Indiana 47405, U.S.A. }
\email{pzhong@indiana.edu}
\begin{abstract}
We obtain a formula for the density of the free convolution of an arbitrary probability measure on the unit circle
of $\mathbb{C}$ with the free multiplicative analogues of the normal distribution on the unit circle. This description relies on a characterization of 
the image of the unit disc under the subordination function, which also allows us to prove some regularity properties
of the measures obtained in this way. As an application, we give a new proof for Biane's classic result on the densities of the 
free multiplicative analogue of the normal distributions. 
We obtain analogue results for probability measures on $\mathbb{R}^+$.
Finally, we describe the density of the 
free multiplicative analogue of the normal distributions as an example and prove unimodality 
and some symmetry properties of 
these measures. 
\end{abstract}

\maketitle
\newsection{Introduction}
Given probability measures $\mu,\nu$ on either the unit circle 
$\mathbb{T} \subset \mathbb{C}$, or
the positive half line $\mathbb{R}^+=[0,+\infty)$, we denote by $\mu\boxtimes \nu$ their free
multiplicative convolution. Free multiplicative Brownian motion is a 
noncommutative stochastic process with stationary increments, which 
have the same distribution as the free multiplicative analogue of the normal distributions.
For $t>0$, we denote by $\lambda_t$ the free multiplicative analogue of the normal distribution on $\mathbb{T}$, 
with $\Sigma$-transform $\Sigma_{\lambda_t}(z)=e^{\frac{t}{2}\frac{1+z}{1-z}}$. Given a probability measure $\mu$ on 
$\mathbb{T}$,
the free Brownian motion with initial distribution $\mu$ is the noncommutative 
stochastic process with distribution $\{\mu\boxtimes\lambda_t: t\geq 0 \}$.
For $t>0$, we denote by $\sigma_t$ the free multiplicative analogue of the normal distribution of the positive half line
with $\Sigma$-transform $\Sigma_{\sigma_t}(z)=e^{\frac{t}{2}\frac{1+z}{z-1}}$. Given a probability measure $\nu$ on 
the positive half line, similarly, 
the free Brownian motion with initial distribution $\nu$ is the noncommutative stochastic process 
with distribution $\{\nu\boxtimes\sigma_t:t\geq 0 \}$.

Biane studied free multiplicative Brownian motion 
in his pioneering papers \cite{BianeBM, BianeJFA}. 
For example, in \cite{BianeBM} he proved that the free unitary Brownian motion
can be approximated by matrix valued Brownian motions, and calculated the moments of $\lambda_t$ and $\sigma_t$ (see also \cite{Rains, Singer}). In 
\cite{BianeJFA} he gave a description of the density of $\lambda_t$ and $\sigma_t$ (see also \cite{DH2011,Zhong1}
for different approaches). 
Voiculescu introduced free liberation processes by using free unitary Brownian motion in his study on free entropy and Fisher information theory \cite{DVV1999}. He also proved some regularity results of the spectral measure of free unitary Brownian motions with arbitrary initial conditions in the same paper. 
These results play important roles in recent works by
Collins and Kemp \cite{CK2014},
Izumi and Ueda \cite{IU2013}, where these authors partially proved so called Unification Conjecture in free entropy and information theory. 
Kemp \cite{2013Kemp1} and 
C{\'e}bron \cite{2013CG}  showed that free positive Brownian can also be approximated by matrix valued Brownian motions (see \cite{2013DHK, 2013Kemp2} for other relevant work). 
In this article we use analytic methods to study the densities of $\mu\boxtimes\lambda_t$ and $\nu\boxtimes\sigma_t$ 
for an arbitrary measure $\mu$ on $\mathbb{T}$
and an arbitrary measure $\nu$ on the positive half line.

We denote 
by $\MT$ the set of probability measures on $\mathbb{T}$,
and by $\mathcal{ID}(\boxtimes, \mathbb{T})$ the set of probability measures on $\mathbb{T}$ which are infinitely divisible with respect
to $\boxtimes$. We also denote by $\Mrealplus$ the set of probability measures on $\mathbb{R}^+$, and 
by $\mathcal{ID}(\boxtimes, \mathbb{R}^+)$ the set of probability measures on $\mathbb{R}^+$ which are infinitely divisible with respect to
$\boxtimes$.

Denote by $P_0$ the Haar measure on $\mathbb{T}$, we have that $P_0\boxtimes\lambda_t =P_0$,
and therefore we restrict ourselves to measures
$\mu\in\MT\backslash\{P_0 \}$. Given $\mu\in\MT\backslash\{P_0\}$, we set $\mu_t=\mu\boxtimes\lambda_t$.
The method which we use to study the density of 
$\mu_t$ relies on a characterization of the image $\Omega_{t, \mu}$ of the unit disc under the subordination map
of $\eta_{\mu_t}$ with respect to $\eta_{\mu}$.
Due to a result in \cite{BB2005}, $\Omega_{t, \mu}$ is simply connected and its boundary is a simple closed curve. We prove that 
the line segment $\{re^{i\theta}:0<r\leq 1\}$ only intersects
$\partial\Omega_{t, \mu}$ at one point for any $\theta\in [0,2\pi)$. We show that the measure $\mu_t$ 
is absolutely continuous with respect to $P_0$, and its
density is analytic whenever it is positive. In addition, we obtain a formula to describe the density of $\mu_t$, and we 
prove that the number of connected components of the support of $\mu_t$ is a non-increasing
function of $t$. The case when $\mu$ is the Dirac measure at $1$ 
yields a new proof for Biane's result on the description of the density of $\lambda_t$ in \cite{BianeJFA}.

We prove similar results for the positive half line. Given $\nu\in\Mrealplus$ which is not
concentrated at 0, set $\nu_t=\nu\boxtimes\sigma_t$. 
Denote by $\Gamma_{t, \nu}$ the image of the upper half plane under the subordination map of 
$\eta_{\nu_t}$ with respect to $\eta_{\nu}$. We show that for every $r>0$ the  semicircle  
$\{re^{i\theta}: 0\leq \theta\leq \pi\}$
intersects $\partial\Gamma_{t, \nu}$ at exactly two points, including one point on the negative half line. 
We give a formula for the density of $\nu_t$, and
prove, among other results, that the number of connected components of 
the support of $\nu_t$ is a non-increasing
function of $t$. When $\nu$ is the Dirac measure at $1$, we obtain a new proof for Biane's \cite{BianeJFA} description
of the density of $\sigma_t$. 
In addition, we prove some symmetry properties of the measure $\sigma_t$.

Our results are multiplicative analogues of results from \cite{Biane1997}, where the free additive
convolution with a semi-circular distribution was studied. We were also influenced by the nice exposition in \cite{CDDF}. 
Our reference for free probability theory is the classical book \cite{Basic} 
by Voiculescu, Dykema and Nica, and our 
reference for the properties of certain subordination functions is Belinschi and Bercovici \cite{BB2005}.
 
In a joint work with H.-W. Huang in \cite{HZ},
we extend our method to study infinitely divisible measures and regularity properties of measures in
partially defined semigroups relative to free multiplicative convolution. 

This article is organized as follows. After this introductory section, we review some preliminaries regarding multiplicative 
free convolution in Section 2. In Section 3, we study the distribution of free multiplicative Brownian motion on $\mathbb{T}$. 
In Section 4, we study the distribution of free multiplicative Brownian motion on $\mathbb{R}^+$.

\newsection{preliminaries}
\subsection{Free multiplicative convolution of measures on \texorpdfstring{$\mathbb{T}$}{}}
For $\mu\in\MT$, we define
\begin{equation}\nonumber
  \pp{\mu}(z)=\int_{\mathbb{T}}\frac{tz}{1-tz}d\mu(t),\, z\in\mathbb{D}
\end{equation}
where $\mathbb{D}$ denotes the open unit disc, and set $\et{\mu}(z)=\pp{\mu}(z)/(1+\pp{\mu}(z))$. 
Then $\et{\mu}(0)=0$ and $\et{\mu}'(0)$ equals the first moment of $\mu$. 
If $\mu$ has nonzero first moment, we define its $\Sigma$-transform by 
    \begin{equation}\nonumber
       \Sigma_{\mu}(z)=\frac{\et{\mu}^{-1}(z)}{z}
    \end{equation} 
for $z$ in a neighborhood of zero. The binary operation $\boxtimes$ on $\MT$, introduced in \cite{DVV1987, BV1992}, represents
the distribution of the product of free unitary random variables. 
If $\mu,\nu\in\MT$ both have nonzero first moment, then $\mu\boxtimes\nu$ is uniquely determined by
      \begin{equation}\nonumber
        \Sigma_{\mu\boxtimes\nu}(z)=\Sigma_{\mu}(z)\Sigma_{\nu}(z)
      \end{equation}
for $z$ in some domain where all three functions involved are defined.       
Given $\mu,\nu\in\MT$, it is proved in \cite[Theorem 3.5]{Biane1998} that
there exists an analytic map $\eta:\mathbb{D}\rightarrow\mathbb{D}$, which we call the
subordination function of $\eta_{\mu\boxtimes\nu}$ with respect to $\eta_{\mu}$,
such that $\eta(0)=0$ and 
\begin{equation}\nonumber
   \eta_{\mu\boxtimes\nu}(z)=\eta_{\mu}(\eta(z))  
\end{equation}
holds for all $z\in\mathbb{D}$. If both $\mu$ and $\nu$ have zero first moment,
then $\mu\boxtimes\nu=P_0$, the Haar measure on $\mathbb{T}$. We are interested
in the case when $\nu$ has nonzero first moment, and in particular $\nu=\lambda_t$. 
As it was pointed out in \cite[Example 3.5]{Zhong1}, when $\mu$ has zero fist moment, 
the subordination function of $\eta_{\mu\boxtimes\nu}$ with respect to $\eta_{\mu}$ is generally not unique.
However, if we require a subordination function satisfying one additional property, then there is 
a unique one, which we call the principal subordination function. 
The next result was first proved in \cite{Biane1998}, and then in \cite{BB2007new} by a different method. In 
\cite{Zhong1}, we proved the uniqueness of the subordination functions 
for the case when one of the measures has zero first moment.
\begin{thm}[\cite{BB2007new, Biane1998}]
 Given $\mu,\nu\in\MT$ such that $\mu$ is different from $P_0$ and
 $\nu$ has nonzero first moment, there exist two unique 
analytic functions $\omega_1,\omega_2:\mathbb{D}\rightarrow\mathbb{D}$ such that
 \begin{enumerate}[$(1)$]
   \item $\omega_1(0)=\omega_2(0)=0;$
   \item $\eta_{\mu\boxtimes\nu}(z)=\eta_{\mu}(\omega_1(z))=\eta_{\nu}(\omega_2(z));$
   \item $\omega_1(z)\omega_2(z)=z\eta_{\mu\boxtimes\nu}(z)$ for all $z\in\mathbb{D}$.
 \end{enumerate} 
\end{thm}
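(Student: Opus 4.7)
The plan is to proceed in three stages: construct $(\omega_1,\omega_2)$ via $\Sigma$-transforms when both measures have nonzero first moment, extend to general $\mu$ by a weak-$*$ approximation argument, and obtain uniqueness from condition~(3) combined with the injectivity of $\eta_\nu$ near the origin.

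\textbf{Stage 1: both first moments nonzero.} When $\mu_1\nu_1\neq 0$, the $\Sigma$-transforms $\Sigma_\mu, \Sigma_\nu, \Sigma_{\mu\boxtimes\nu}$ are analytic near $0$ and satisfy $\Sigma_{\mu\boxtimes\nu} = \Sigma_\mu \Sigma_\nu$. I set, on a neighborhood of $0$,
\[
\omega_1(z) = \eta_{\mu\boxtimes\nu}(z)\,\Sigma_\mu(\eta_{\mu\boxtimes\nu}(z)), \qquad \omega_2(z) = \eta_{\mu\boxtimes\nu}(z)\,\Sigma_\nu(\eta_{\mu\boxtimes\nu}(z)).
\]
Properties (1) and (2) are immediate from $\eta_\mu^{-1}(w) = w\Sigma_\mu(w)$ and its analogue for $\nu$, while property (3) is the direct computation
\[
\omega_1\omega_2 \;=\; \eta_{\mu\boxtimes\nu}^{2}\,\Sigma_{\mu\boxtimes\nu}(\eta_{\mu\boxtimes\nu}) \;=\; \eta_{\mu\boxtimes\nu}\cdot\eta_{\mu\boxtimes\nu}^{-1}(\eta_{\mu\boxtimes\nu}) \;=\; z\,\eta_{\mu\boxtimes\nu}.
\]
The subordination theorem cited just above furnishes global analytic maps $\mathbb{D}\to\mathbb{D}$ extending these local functions, and (3) then propagates to all of $\mathbb{D}$ by the identity principle.

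\textbf{Stage 2: general $\mu\neq P_0$.} I approximate $\mu$ by $\mu^{(n)} = (1-1/n)\mu + (1/n)\delta_1 \in \MT$, whose first moment $(1-1/n)\mu_1 + 1/n$ is nonzero for large $n$. Weak-$*$ continuity of $\boxtimes$ gives $\eta_{\mu^{(n)}\boxtimes\nu} \to \eta_{\mu\boxtimes\nu}$ locally uniformly on $\mathbb{D}$. The Stage~1 functions $\omega_j^{(n)}$ form a normal family of analytic self-maps of $\mathbb{D}$ fixing $0$; extracting a subsequence, I obtain analytic limits $\omega_j: \mathbb{D}\to\overline{\mathbb{D}}$ with $\omega_j(0)=0$, and (2), (3) survive the passage to the limit. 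Since $\mu\neq P_0$ and $\nu_1\neq 0$ force $\mu\boxtimes\nu\neq P_0$, the right-hand side of (3) is not identically zero, so neither $\omega_j$ is identically zero; the maximum modulus principle combined with $\omega_j(0)=0$ then forces $\omega_j(\mathbb{D})\subset\mathbb{D}$.

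\textbf{Stage 3: uniqueness.} Suppose $(\tilde\omega_1,\tilde\omega_2)$ also satisfies (1)--(3). Because $\nu_1\neq 0$, $\eta_\nu$ has nonzero derivative at $0$ and is injective on some neighborhood $U$ of $0$; since $\tilde\omega_2(0)=\omega_2(0)=0$, both take values in $U$ on a small disc around $0$, and $\eta_\nu(\tilde\omega_2(z)) = \eta_{\mu\boxtimes\nu}(z) = \eta_\nu(\omega_2(z))$ then forces $\tilde\omega_2=\omega_2$ there. The identity principle extends this to $\mathbb{D}$, and (3) yields $\tilde\omega_1=\omega_1$. The main obstacle is Stage~2: the Biane subordination function supplied by the previous theorem is \emph{a priori} not unique when $\mu_1=0$, and it is exactly condition (3) that singles out the correct limiting pair and prevents the approximants from collapsing to the trivial pair $(0,0)$ in the limit.
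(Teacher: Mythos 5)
The paper does not prove this theorem; it is stated as a citation to Biane~(1998) and Belinschi--Bercovici~(2007), with the uniqueness in the zero-first-moment case attributed to \cite{Zhong1}. So your proposal is being checked for correctness on its own terms rather than against an argument in the text.

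Your Stage~1 computation is correct: with $\omega_j(z)=\eta_{\mu\boxtimes\nu}(z)\Sigma_\bullet(\eta_{\mu\boxtimes\nu}(z))$ one has $\omega_j = \eta_\bullet^{-1}\circ\eta_{\mu\boxtimes\nu}$ near the origin, and the verification of $(3)$ via $\Sigma_{\mu\boxtimes\nu}=\Sigma_\mu\Sigma_\nu$ is clean; the globalization by Biane's subordination theorem plus the identity principle is also fine. Stage~3 is correct and is exactly the right observation: $\eta_\nu$ is locally injective because $\nu$ has nonzero first moment, which pins down $\omega_2$, and then $(3)$ pins down $\omega_1$ on the open set where $\omega_2\neq 0$ and hence everywhere. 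The approximation-plus-normal-family argument of Stage~2 is sound as a scheme; the passage of $(2)$ to the limit uses the Schwarz lemma to keep $\omega_1^{(n)}(K)$ in a fixed compact subset of $\mathbb{D}$, which you should say explicitly, but this is routine.

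The one genuine gap is the unsupported assertion that $\mu\neq P_0$ together with $\nu$ having nonzero first moment forces $\mu\boxtimes\nu\neq P_0$. This is what prevents the limit pair $(\omega_1,\omega_2)$ from collapsing to $(0,0)$ via $(3)$, and it is not immediate: subordination alone cannot exclude this, since $\omega_2\equiv 0$ is consistent with $\eta_{\mu\boxtimes\nu}\equiv 0$. The claim is true and can be proved by the moment--free-cumulant formula for $\boxtimes$: if $m_1(\mu)=\dots=m_{k-1}(\mu)=0$ and $m_k(\mu)\neq 0$, then $\kappa_1(\mu)=\dots=\kappa_{k-1}(\mu)=0$ and $\kappa_k(\mu)=m_k(\mu)$, and the only noncrossing partition of $\{1,\dots,k\}$ with all blocks of size $\geq k$ is the full block, whose Kreweras complement is discrete, whence $m_k(\mu\boxtimes\nu)=m_k(\mu)\,m_1(\nu)^k\neq 0$. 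Either include this computation or cite it. Alternatively, you can sidestep the claim entirely by noting that if $\mu\boxtimes\nu=P_0$, then any pair satisfying $(1)$--$(3)$ is forced to be $(0,0)$ (by local injectivity of $\eta_\nu$ for $\omega_2$, and by the discreteness of the zero set of $\eta_\mu\not\equiv 0$ for $\omega_1$), so the theorem holds trivially in that degenerate case and your Stage~2 argument need only be run under the standing assumption $\mu\boxtimes\nu\neq P_0$.
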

Given $\mu\in\MT\backslash P_0$, we set $\mu_t=\mu\boxtimes\lambda_t$.
We also denote by $\eta_t$ and $\zeta_t$ the unique subordination functions
satisfying the equations 
  \begin{enumerate}[$(1)$]
    \item $\eta_t(0)=\zeta_t(0)=0$,
    \item $\eta_{\mu_t}(z)=\eta_{\mu}(\eta_t(z))=\eta_{\lambda_t}(\zeta_t(z))$ and
    \item $\eta_t(z)\zeta_t(z)=z\eta_{\mu_t}(z)$.
  \end{enumerate} 
By a characterization of the 
$\eta$-transforms of measures on $\mathbb{T}$ in
\cite[Proposition 3.2]{BB2005}, for any $t>0$ there exists a unique measure $\rho_t\in\MT$ such that
   \begin{equation}\label{eq:2.1}
     \eta_t(z)=\eta_{\rho_t}(z)
   \end{equation}
holds for all $z\in\mathbb{D}$. We now recall a result in \cite[Lemma 3.4 and Corollary 3.13]{Zhong1}.
\begin{lemma}[\cite{Zhong1}]\label{lemma:2.2}
 The probability measure $\rho_t$ is $\boxtimes$-infinitely divisible and its $\Sigma$-transform is given by 
      \begin{equation}
        \Sigma_{\rho_t}(z)=\Sigma_{\lambda_t}(\eta_{\mu}(z))=\exp\left(\frac{t}{2}\int_{\mathbb{T}}\frac{1+\xi z}{1-\xi z}d\mu(\xi) \right).
      \end{equation}
\end{lemma}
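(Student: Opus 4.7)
\emph{Proof plan.} The strategy is to first derive the identity $\Sigma_{\rho_t}(w) = \Sigma_{\lambda_t}(\eta_\mu(w))$ as a purely algebraic consequence of the three defining properties of the subordination functions $\eta_t,\zeta_t$, and then to unfold the right-hand side so that the resulting expression falls into the Bercovici--Voiculescu classification of $\Sigma$-transforms of $\boxtimes$-infinitely divisible measures on $\mathbb{T}$.

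For the first step I would rewrite property (3) as $z/\eta_t(z) = \zeta_t(z)/\eta_{\mu_t}(z)$. Substituting $\eta_{\mu_t}(z) = \eta_{\lambda_t}(\zeta_t(z))$ from (2), the right-hand side becomes $\zeta_t(z)/\eta_{\lambda_t}(\zeta_t(z)) = \Sigma_{\lambda_t}(\eta_{\lambda_t}(\zeta_t(z))) = \Sigma_{\lambda_t}(\eta_{\mu_t}(z))$, directly from the definition of the $\Sigma$-transform. Since $\eta_t = \eta_{\rho_t}$ by \eqref{eq:2.1}, the left-hand side is $\Sigma_{\rho_t}(\eta_t(z))$. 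Setting $w = \eta_t(z)$ and using $\eta_{\mu_t}(z) = \eta_\mu(w)$ from (2) then yields $\Sigma_{\rho_t}(w) = \Sigma_{\lambda_t}(\eta_\mu(w))$ on a neighborhood of the origin.

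Next I would insert the given formula $\Sigma_{\lambda_t}(w) = \exp\bigl((t/2)(1+w)/(1-w)\bigr)$ and expand the ratio using $\eta_\mu = \psi_\mu/(1+\psi_\mu)$. A short calculation reduces $(1+\eta_\mu(w))/(1-\eta_\mu(w))$ to $1 + 2\psi_\mu(w)$, and then to $\int_\mathbb{T}(1+\xi w)/(1-\xi w)\,d\mu(\xi)$ by using $\mu(\mathbb{T}) = 1$. The resulting expression for $\Sigma_{\rho_t}$ is the exponential of a Herglotz-type integral with trivial phase and positive finite spectral measure $(t/2)\mu$, which is precisely the form characterizing $\Sigma$-transforms of $\boxtimes$-infinitely divisible measures on $\mathbb{T}$ (Bercovici--Voiculescu); hence $\rho_t \in \mathcal{ID}(\boxtimes,\mathbb{T})$.

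The only delicate point is ensuring local invertibility of $\eta_t$ at the origin, so that $\Sigma_{\rho_t}$ is actually defined in a neighborhood of $0$. When $\mu$ has nonzero first moment, differentiating (2) and (3) at $z=0$ yields $\eta_t'(0) = e^{-t/2} \neq 0$, and one is done. When $\mu$ has zero first moment the subordination function is not unique, and one must invoke the extra requirement selecting the principal subordination function discussed above (following the uniqueness result quoted from \cite{Zhong1}) to draw the same conclusion. This bookkeeping is the main, though minor, obstacle; the remainder of the argument is a mechanical manipulation of the three subordination identities.
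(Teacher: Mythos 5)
The paper does not prove this lemma; it is recalled verbatim from \cite[Lemma 3.4 and Corollary 3.13]{Zhong1}, so there is no in-paper argument to compare against. Evaluated on its own terms, your reconstruction is correct. Rearranging the third subordination identity to $z/\eta_t(z) = \zeta_t(z)/\eta_{\mu_t}(z)$, substituting $\eta_{\mu_t} = \eta_{\lambda_t}\circ\zeta_t$, and using $w/\eta_{\lambda_t}(w) = \Sigma_{\lambda_t}(\eta_{\lambda_t}(w))$ gives $z/\eta_t(z) = \Sigma_{\lambda_t}(\eta_{\mu_t}(z)) = \Sigma_{\lambda_t}(\eta_\mu(\eta_t(z)))$, which is precisely the relation the paper later records as \eqref{eq:2.4}. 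The reduction $(1+\eta_\mu)/(1-\eta_\mu) = 1+2\psi_\mu = \int_\mathbb{T}(1+\xi z)/(1-\xi z)\,d\mu(\xi)$ and the appeal to the Bercovici--Voiculescu L\'evy--Khin\v{c}in classification of $\boxtimes$-infinitely divisible measures on $\mathbb{T}$ are both sound.

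One tightening worth noting: the invertibility issue you flag at the end is in fact already resolved by the identity you derived. Since $\eta_{\mu_t}(z)\to 0$ as $z\to 0$ and $\Sigma_{\lambda_t}(0)=e^{t/2}$, letting $z\to 0$ in $z/\eta_t(z) = \Sigma_{\lambda_t}(\eta_{\mu_t}(z))$ gives $\eta_t'(0) = e^{-t/2} \neq 0$ directly, whether or not $\mu$ has nonzero first moment. (One only needs $\eta_t\not\equiv 0$ and $\eta_{\mu_t}\not\equiv 0$, i.e.\ $\rho_t,\mu_t\neq P_0$, and this follows from $\mu\neq P_0$ together with property (3).) So the case split on the first moment of $\mu$ and the renewed appeal to uniqueness of the principal subordination function are unnecessary for establishing local invertibility at the origin; that uniqueness is already built into having a well-defined $\eta_t = \eta_{\rho_t}$ to begin with.
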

Let $\Phi_{t, \mu}(z)=z\Sigma_{\rho_t}(z)$ and let $\Omega_{t, \mu}=\{z\in\mathbb{D}:|\Phi_{t, \mu}(z)|<1 \}$, then since 
$\rho_t\in\mathcal{ID}(\boxtimes, \mathbb{T})$, we have that $\Phi_{t, \mu}(\eta_{\rho_t}(z))=z$ for all $z\in \mathbb{D}$.
The function $\Phi_{t, \mu}$ satisfies the properties in \cite[Theorem 4.4, Proposition 4.5]{BB2005}, 
thus the function $\eta_t(z)=\eta_{\rho_t}(z)$ can be extended continuously to the boundary $\partial\mathbb{D}$.
\begin{prop}[\cite{BB2005}]\label{prop:2.3}
 \begin{enumerate}[$(1)$]
   \item The function $\eta_t$ is a conformal map with image $\Omega_{t, \mu}$, and 
   its inverse is the restriction of $\Phi_{t, \mu}$
    to $\Omega_{t, \mu}$. In addition, $\eta_t$ extends to be a continuous function on $\overline{\mathbb{D}}$ 
    and $\eta_t$ is one-to-one on $\overline{\mathbb{D}}$.
   \item $\Omega_{t, \mu}$ is a simply connected domain bounded by a simple closed curve.
   \item If $\xi\in\mathbb{T}$ satisfies $\eta_t(\xi)\in\mathbb{D}$, then $\eta_t$ can be continued analytically to a neighborhood of $\xi$.
 \end{enumerate}
\end{prop}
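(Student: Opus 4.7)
The plan is to apply the Belinschi--Bercovici theory of $\eta$-transforms of $\boxtimes$-infinitely divisible measures on $\mathbb{T}$ directly to $\Phi_{t,\mu}(z)=z\Sigma_{\rho_t}(z)$. First I would verify that $\Phi_{t,\mu}$ falls under the hypotheses of \cite[Theorem 4.4]{BB2005}: by Lemma \ref{lemma:2.2}, $\Sigma_{\rho_t}(z)=\exp\bigl(\tfrac{t}{2}h(z)\bigr)$ where $h(z)=\int_{\mathbb{T}}(1+\xi z)/(1-\xi z)\,d\mu(\xi)$ is a Herglotz function with $h(0)=1$ and $\operatorname{Re} h(z)>0$ on $\mathbb{D}$. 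Consequently $\Phi_{t,\mu}$ is analytic on $\mathbb{D}$, vanishes only at the origin, satisfies $\Phi_{t,\mu}(0)=0$ with $\Phi_{t,\mu}'(0)=e^{t/2}>0$, and obeys $|\Phi_{t,\mu}(z)|>|z|$ for $z\in\mathbb{D}\setminus\{0\}$. Applied to this $\Phi_{t,\mu}$, \cite[Theorem 4.4]{BB2005} yields that $\Omega_{t,\mu}$ is open, simply connected, contains a neighborhood of $0$, and that $\Phi_{t,\mu}$ is univalent on $\Omega_{t,\mu}$ with image $\mathbb{D}$; the inverse on $\mathbb{D}$ must coincide with $\eta_{\rho_t}=\eta_t$ in view of (\ref{eq:2.1}) and the identity $\Phi_{t,\mu}\circ\eta_{\rho_t}=\mathrm{id}_{\mathbb{D}}$ recorded just before the proposition. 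This delivers the interior conformal statement of part (1) and simple connectedness in part (2).

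Next I would invoke \cite[Proposition 4.5]{BB2005} to obtain the continuous, injective extension of $\eta_t$ to $\overline{\mathbb{D}}$, which via Carath\'eodory's theorem is equivalent to $\partial\Omega_{t,\mu}$ being a simple closed curve, completing part (2) and the boundary assertion of part (1). The argument there tracks $\Phi_{t,\mu}$ along $\partial\mathbb{D}$ using the Herglotz representation: non-tangential boundary values of $h$ exist almost everywhere on $\mathbb{T}$ by Fatou's theorem, and the exponential structure of $\Sigma_{\rho_t}$ together with the interior univalence on $\Omega_{t,\mu}$ prevents the resulting boundary curve from self-intersecting.

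For part (3), suppose $\xi\in\mathbb{T}$ and $\eta_t(\xi)\in\mathbb{D}$. Then $|\Phi_{t,\mu}(\eta_t(\xi))|=|\xi|=1$, so $\eta_t(\xi)\in\partial\Omega_{t,\mu}\cap\mathbb{D}$ and $\Phi_{t,\mu}$ is already analytic in a full open neighborhood of $\eta_t(\xi)$. The univalence of $\Phi_{t,\mu}$ on $\Omega_{t,\mu}$ forces $\Phi_{t,\mu}'(\eta_t(\xi))\neq 0$, since a vanishing derivative at a boundary point reachable from inside $\Omega_{t,\mu}$ would produce branch points in $\Omega_{t,\mu}$ contradicting injectivity there. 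The analytic inverse function theorem then produces a local holomorphic inverse of $\Phi_{t,\mu}$ on an open neighborhood of $\xi$ in $\mathbb{C}$, which must agree with $\eta_t$ on the inner side of $\mathbb{T}$ by the identity principle, giving the desired analytic continuation. The main obstacle is the boundary regularity step in paragraph two: one must establish that the conformal map $\eta_t:\mathbb{D}\to\Omega_{t,\mu}$ has boundary values on $\partial\mathbb{D}$ forming a continuous injective curve, which is precisely what the careful analysis of $\Phi_{t,\mu}$ near $\partial\mathbb{D}$ in \cite{BB2005} supplies; the remaining steps are routine verifications or standard conformal mapping theory.
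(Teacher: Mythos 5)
Your proposal is correct and takes essentially the same route as the paper, which supplies no detailed proof of Proposition~\ref{prop:2.3} but simply observes that $\Phi_{t,\mu}$ satisfies the hypotheses of \cite[Theorem 4.4, Proposition 4.5]{BB2005} and cites those results. Your write-up just makes explicit the verification that the paper leaves implicit (the Herglotz structure of $h$, the nonvanishing of $\Sigma_{\rho_t}$, $\Phi_{t,\mu}'(0)=e^{t/2}$, $|\Phi_{t,\mu}(z)|>|z|$) and unpacks the standard branch-point argument behind part (3).
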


We close this section with a formula which allows us to recover $\mu$ from its $\eta$-transform. 
For $\mu\in\MT$, we have that
   \begin{equation}\label{eq:2.46}
      \frac{1}{2\pi}\left(\frac{1+\et{\mu}(z)}{1-\et{\mu}(z)}\right)=\frac{1}{2\pi}
       \int^{2\pi}_{0}\frac{e^{i\theta}+z}{e^{i\theta}-z}d\mu(e^{-i\theta}), \,z\in\mathbb{D}.
   \end{equation}
The real part of this function is the Poisson integral of the measure $d\mu(e^{-i\theta})$,
and then $\mu$ can be recovered from (\ref{eq:2.46}) by the Stieltjes inversion formula. The functions
   \begin{equation}\label{eq:2.5}
     \frac{1}{2\pi}\Re{\left(\frac{1+\et{\mu}(re^{i\theta})}{1-\et{\mu}(re^{i\theta})} \right)}
       =\frac{1}{2\pi}\frac{1-|\et{\mu}(re^{i\theta})|^2}{|1-\et{\mu}(re^{i\theta})|^2}
   \end{equation}
converge to the density of $d\mu(e^{-i\theta})$ a.e. relative to Lebesgue measure $d\theta$, and they converge to infinity a.e. relative to the singular part of this measure.
\subsection{Free multiplicative convolution of measures on \texorpdfstring{$\mathbb{R}^+$}{}}
Given $\mu\in\Mrealplus$, we define
\begin{equation}\nonumber
  \psi_{\mu}(z)=\int_0^{+\infty}\frac{tz}{1-tz}d\mu(t),\,z\in \mathbb{C}\backslash \mathbb{R}^+
\end{equation}
and set $\et{\mu}(z)=\pp{\mu}(z)/(1+\pp{\mu}(z))$. 
The case when $\mu=\delta_0$, the 
Dirac measure at $0$, is trivial, we thus set
$\Mrealpplus=\mathcal{M}_{\mathbb{R}^+}\backslash\{\delta_0\}$.
Given $\mu\in\Mrealpplus$,
it is shown in \cite{BV1993} that the function $\et{\mu}$
is univalent in the left half plane $i\mathbb{C}^+$. 
The $\Sigma$-transform of $\mu$ is defined by
   \begin{equation}\nonumber
      \Sigma_{\mu}(z)=\frac{\et{\mu}^{-1}(z)}{z}
   \end{equation}
for $z$ in $\et{\mu}(i\mathbb{C}^+)$. Given $\mu,\nu\in\Mrealpplus$, 
the free multiplicative convolution of $\mu$ and $\nu$, denoted by $\mu\boxtimes\nu$,
is uniquely determined by 
    \begin{equation}
       \Sigma_{\mu\boxtimes\nu}(z)=\Sigma_{\mu}(z)\Sigma_{\nu}(z)
    \end{equation}
in some domain where all three functions involved are defined.
It is also known from \cite{BB2007new, Biane1998} that there exist two analytic functions, which
are called subordination functions,
$\omega_1,\omega_2:\mathbb{C}\backslash\mathbb{R}^+ \rightarrow \mathbb{C}\backslash\mathbb{R}^+$
such that
\begin{enumerate}[$(1)$]
  \item $\omega_j(0-)=0$ for $j=1,2$;
  \item for any $\lambda\in\mathbb{C}^+$, we have $\omega_j(\overline{\lambda})=
      \overline{\omega_j(\lambda)}$, $\omega_j(\lambda)\in\mathbb{C}^+$ 
      and $\arg\omega_j(\lambda)\geq \arg\lambda$ for $j=1,2$;
  \item $\et{\mu\boxtimes\nu}(z)=\eta_{\mu}\left(\omega_1(z) \right)
        =\eta_{\nu}\left(\omega_2(z)\right)$ for $z\in\mathbb{C}\backslash\mathbb{R}^+$.
\end{enumerate}

Fix $\nu\in\Mrealpplus$ and let $\omega_t$ be the subordination function of $\eta_{\nu\boxtimes\sigma_t}$
with respect to $\eta_{\nu}$, that is $\eta_{\nu\boxtimes\sigma_t}(z)=\eta_{\nu}(\omega_t(z))$. 
Due to \cite[Proposition 6.1]{BV1993}, there exists a measure $\tau_t\in\Mrealpplus$, such
that $\omega_t(z)=\eta_{\tau_t}(z)$ for $z\in\mathbb{C}\backslash\mathbb{R}^+$. We then recall
\cite[Proposition 4.3]{Zhong1} as follows. 
\begin{prop}
 The measure $\tau_t$ is $\boxtimes$-infinitely divisible and its $\Sigma$-transform is
given by
    \begin{equation}\label{eq:2.6}
      \Sigma_{\tau_t}(z)=\Sigma_{\sigma_t}\left(\eta_{\nu}(z) \right)
       =\exp \left( \frac{t}{2} \int_0^{+\infty}\frac{1+\xi z}{\xi z -1}d\nu(\xi) \right).
    \end{equation}
\end{prop}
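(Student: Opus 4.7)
The plan is to follow the template of Lemma~\ref{lemma:2.2}, which is the $\mathbb{T}$-analogue of the present proposition. The two key ingredients are the subordination identity $\eta_{\nu\boxtimes\sigma_t}(z)=\eta_{\nu}(\eta_{\tau_t}(z))$ that comes from $\omega_t=\eta_{\tau_t}$, and the multiplicativity $\Sigma_{\nu\boxtimes\sigma_t}=\Sigma_{\nu}\Sigma_{\sigma_t}$. First I would invert the subordination relation to obtain
\begin{equation*}
\eta_{\nu\boxtimes\sigma_t}^{-1}(w)=\eta_{\tau_t}^{-1}\bigl(\eta_{\nu}^{-1}(w)\bigr),
\end{equation*}
and then substitute $w=\eta_{\nu}(z)$. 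Using the tautology $\Sigma_{\nu}(\eta_{\nu}(z))=z/\eta_{\nu}(z)$ and dividing by $w$ on both sides, the factors $z/\eta_{\nu}(z)$ cancel, leaving the first equality $\Sigma_{\tau_t}(z)=\Sigma_{\sigma_t}(\eta_{\nu}(z))$ of~\eqref{eq:2.6}.

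For the integral form, I would substitute $\Sigma_{\sigma_t}(w)=\exp\bigl(\tfrac{t}{2}\tfrac{1+w}{w-1}\bigr)$ with $w=\eta_{\nu}(z)$ and use the identity $\eta_{\nu}=\psi_{\nu}/(1+\psi_{\nu})$ to compute
\begin{equation*}
\frac{1+\eta_{\nu}(z)}{\eta_{\nu}(z)-1}=-\bigl(1+2\psi_{\nu}(z)\bigr)=-\int_0^{+\infty}\frac{1+\xi z}{1-\xi z}\,d\nu(\xi)=\int_0^{+\infty}\frac{1+\xi z}{\xi z-1}\,d\nu(\xi),
\end{equation*}
which completes~\eqref{eq:2.6}.

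For the $\boxtimes$-infinite divisibility of $\tau_t$, I would appeal to the Bercovici--Voiculescu L\'evy--Khintchine characterization of $\mathcal{ID}(\boxtimes,\mathbb{R}^+)$ from~\cite{BV1993}: a measure in $\Mrealpplus$ is $\boxtimes$-infinitely divisible exactly when its $\Sigma$-transform admits an exponential representation whose exponent has precisely the shape $\gamma+\int_0^{+\infty}\frac{1+\xi z}{\xi z-1}d\sigma(\xi)$ for some $\gamma\in\mathbb{R}$ and finite positive Borel measure $\sigma$ on $[0,+\infty)$. Our formula matches this template with $\gamma=0$ and $\sigma=(t/2)\nu$, so $\tau_t\in\mathcal{ID}(\boxtimes,\mathbb{R}^+)$. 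The main technical obstacle is domain bookkeeping rather than algebra: on $\mathbb{R}^+$ the $\Sigma$-transform is a priori only defined on the conformal image $\eta_{\mu}(i\mathbb{C}^+)$, so I would first verify the identity on a small left neighborhood of $0$ in $i\mathbb{C}^+$ where univalence is guaranteed by~\cite{BV1993}, and then extend to $\mathbb{C}\setminus\mathbb{R}^+$ by analytic continuation, using properties (1)--(3) of the subordination functions listed above to ensure that $\eta_{\nu}(z)$ stays in the natural domain of $\Sigma_{\sigma_t}$.
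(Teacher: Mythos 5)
Note first that the paper does not supply its own argument for this proposition: it is recalled verbatim from \cite[Proposition 4.3]{Zhong1}, in parallel with how Lemma~\ref{lemma:2.2} is recalled on the circle. Your reconstruction is correct and is almost certainly the intended proof. Inverting the subordination relation $\eta_{\nu\boxtimes\sigma_t}=\eta_{\nu}\circ\eta_{\tau_t}$ to get $\eta_{\tau_t}^{-1}(z)=\eta_{\nu\boxtimes\sigma_t}^{-1}(\eta_{\nu}(z))$, then applying $\Sigma_{\nu\boxtimes\sigma_t}=\Sigma_{\nu}\Sigma_{\sigma_t}$ together with the tautology $\Sigma_{\nu}(\eta_{\nu}(z))=z/\eta_{\nu}(z)$, does cancel cleanly to $\Sigma_{\tau_t}(z)=\Sigma_{\sigma_t}(\eta_{\nu}(z))$, and the computation $\frac{1+\eta_\nu}{\eta_\nu-1}=-(1+2\psi_\nu)=\int_0^{+\infty}\frac{1+\xi z}{\xi z-1}\,d\nu(\xi)$ checks. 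For $\boxtimes$-infinite divisibility, invoking the Bercovici--Voiculescu L\'evy--Khintchine characterization from \cite{BV1993} is a legitimate route (you only need the sufficiency direction, that an exponent of this Nevanlinna shape forces ID); a slightly more self-contained alternative is to note that $s\mapsto\Sigma_{\sigma_s}\circ\eta_\nu$ is multiplicative in $s$, so once one knows each $\tau_s$ is a probability measure, $\tau_{t/n}$ is an $n$-th $\boxtimes$-root of $\tau_t$. Your closing remark about first establishing the identity on a neighborhood of $0$ in $i\mathbb{C}^+$, where univalence of $\eta_\nu$ from \cite{BV1993} is available, and then continuing analytically, is exactly the right caution and is what makes the argument rigorous.
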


Let $H_{t,\nu}(z)=z\Sigma_{\tau_t}(z)$, and let $\Gamma_{t,\nu}$ be the connected component of the 
set $\{z\in\mathbb{C}^+:\Im(H_{t,\nu}(z))>0 \}$ whose boundary contains the negative half line $(-\infty,0)$.
We then have that $H_{t,\nu}(\omega_t(z))=z$ for $z\in \mathbb{C}^+$
and that $\omega_t(H_{t,\nu}(z))=z$ for $z\in\Gamma_{t,\nu}$.
The following result deduces that this definition of $\Gamma_{t,\nu}$ agrees with 
the one given in the introduction.
\begin{prop}\label{prop:2.6}
  \begin{enumerate}[$(1)$]
    \item The restriction of $\omega_t$ to $\mathbb{C}^+$ is a conformal map with image $\Gamma_{t,\nu}$, and its
     inverse is the restriction of $H_{t,\nu}$ to $\Gamma_{t,\nu}$. In addition, $\omega_t$ extends to be a 
     continuous function on $\mathbb{C}^+\cup \mathbb{R}$, and $\omega_t$ is one to one on this set.
    \item If $\xi\in(0,+\infty)$ satisfies $\Im(\omega_t(\xi))>0$, then $\omega_t$ can be continued
    analytically to a neighborhood of $\xi$. 
  \end{enumerate}
\end{prop}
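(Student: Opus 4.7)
The plan is to mirror the proof of Proposition \ref{prop:2.3} for the unit disc, with $\mathbb{C}^+$ replacing $\mathbb{D}$ and $\Gamma_{t,\nu}$ replacing $\Omega_{t,\mu}$. The key input is the $\boxtimes$-infinite divisibility of $\tau_t$ together with the explicit formula \eqref{eq:2.6}: the integrand $(1+\xi z)/(\xi z-1)$ is analytic in $z$ on $\mathbb{C}\setminus\mathbb{R}^+$ for each $\xi\ge 0$ and is uniformly bounded on compact subsets, so $\Sigma_{\tau_t}$ extends to an analytic, zero-free function on $\mathbb{C}\setminus\mathbb{R}^+$, and consequently $H_{t,\nu}(z)=z\Sigma_{\tau_t}(z)$ is analytic there.

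First I would verify the two compositional identities. Since $\omega_t=\et{\tau_t}$, the definition of the $\Sigma$-transform yields $\Sigma_{\tau_t}(\omega_t(z))=z/\omega_t(z)$, so $H_{t,\nu}(\omega_t(z))=z$ on $\mathbb{C}^+$ (valid first near $0^-$ and then throughout $\mathbb{C}^+$ by analytic continuation). This already implies that $\omega_t$ is injective on $\mathbb{C}^+$ with non-vanishing derivative, hence is a conformal map onto its image, and that $\Im H_{t,\nu}>0$ on $\omega_t(\mathbb{C}^+)$.

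Next I would identify $\omega_t(\mathbb{C}^+)=\Gamma_{t,\nu}$. By Schwarz reflection and the fact that $\tau_t$ is supported on $[0,+\infty)$, $\omega_t$ extends analytically to $\mathbb{C}\setminus\mathbb{R}^+$ and carries $(-\infty,0)$ into $(-\infty,0)$. Hence $\omega_t(\mathbb{C}^+)$ is a connected open subset of $\{z\in\mathbb{C}^+:\Im H_{t,\nu}(z)>0\}$ whose boundary contains $(-\infty,0)$, so by definition it sits inside the component $\Gamma_{t,\nu}$. For the reverse inclusion, $\omega_t\circ H_{t,\nu}$ is analytic on $\Gamma_{t,\nu}$ and equals the identity near $0^-$, so by analytic continuation $\omega_t\circ H_{t,\nu}=\mathrm{id}$ on $\Gamma_{t,\nu}$. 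Thus $H_{t,\nu}$ maps $\Gamma_{t,\nu}$ injectively into $\omega_t(\mathbb{C}^+)$, and the two maps are mutually inverse conformal equivalences between $\mathbb{C}^+$ and $\Gamma_{t,\nu}$.

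The continuous and injective extension of $\omega_t$ to $\mathbb{C}^+\cup\mathbb{R}$ in part (1) follows from the half-line counterpart of the boundary-regularity results of \cite{BB2005} used in Proposition \ref{prop:2.3}: the Herglotz-type representation of $\Sigma_{\tau_t}$ gives $H_{t,\nu}$ a continuous extension to $\overline{\Gamma_{t,\nu}}$ with a homeomorphic boundary correspondence onto $\mathbb{R}\cup\{\infty\}$, and transporting the inverse correspondence produces the desired extension of $\omega_t$. Part (2) is then immediate from the inverse function theorem: if $\xi\in(0,+\infty)$ and $\Im\omega_t(\xi)>0$, then $\omega_t(\xi)\in\Gamma_{t,\nu}$, where $H_{t,\nu}$ is conformal and hence has non-vanishing derivative, so there is an analytic local inverse of $H_{t,\nu}$ near $\xi=H_{t,\nu}(\omega_t(\xi))$ which necessarily agrees with $\omega_t$ on the upper side by uniqueness of analytic continuation. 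The main obstacle is the boundary extension and the Jordan-curve character of $\partial\Gamma_{t,\nu}$, which requires careful importation of the half-plane analog of the arguments in \cite{BB2005}; the algebraic identification $\omega_t(\mathbb{C}^+)=\Gamma_{t,\nu}$ and part (2) are essentially formal consequences.
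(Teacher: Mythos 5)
Your argument for part (2) has a genuine error. You write that $\omega_t(\xi)\in\Gamma_{t,\nu}$ when $\xi\in(0,+\infty)$ and $\Im\omega_t(\xi)>0$, and then invoke conformality of $H_{t,\nu}$ on $\Gamma_{t,\nu}$ to get a non-vanishing derivative. But $\omega_t(\xi)$ cannot lie in the open set $\Gamma_{t,\nu}$: by continuity of the extension, $H_{t,\nu}(\omega_t(\xi))=\xi\in\mathbb{R}$, so $\Im H_{t,\nu}(\omega_t(\xi))=0$, whereas $\Im H_{t,\nu}>0$ throughout $\Gamma_{t,\nu}$. In fact $\omega_t(\xi)\in\partial\Gamma_{t,\nu}\cap\mathbb{C}^+$, a boundary point. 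The function $H_{t,\nu}$ is still analytic in a full neighborhood of $\omega_t(\xi)$ (it is analytic on all of $\mathbb{C}^+$), but the non-vanishing of $H_{t,\nu}'$ there is precisely the nontrivial point and does not follow from conformality on the interior. This is exactly the content that the paper imports wholesale from \cite[Proposition 5.2]{BB2005}: since $\tau_t$ is $\boxtimes$-infinitely divisible, $\omega_t=\eta_{\tau_{t/2}\boxtimes\tau_{t/2}}$ is the $\eta$-transform of a genuine free multiplicative convolution of non-degenerate measures, and the cited result supplies both the continuous boundary extension and the analyticity at boundary points where the extension is non-real.

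Beyond this gap, your proposal diverges structurally from the paper's proof: the paper essentially reduces the whole proposition to one citation of \cite[Proposition 5.2]{BB2005} (via the infinite-divisibility identity $\omega_t=\eta_{\tau_{t/2}\boxtimes\tau_{t/2}}$), plus a separate argument for continuity of $\omega_t$ at $z=0$. That last point is not addressed in your proposal at all. The result in \cite{BB2005} only gives continuity on $\mathbb{C}^+\cup\mathbb{R}\setminus\{0\}$; to get continuity at $0$ the paper appeals to \cite[Remark 3.3]{Belinschi06} when $\nu$ is not a point mass, and handles the Dirac case separately by noting $\tau_t$ is then compactly supported. Your vague appeal to "the half-line counterpart of the boundary-regularity results of \cite{BB2005}" and to a "Herglotz-type representation of $\Sigma_{\tau_t}$" to produce a homeomorphic boundary correspondence neither identifies the precise reference nor accounts for the point $z=0$. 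Your identification $\omega_t(\mathbb{C}^+)=\Gamma_{t,\nu}$ via the two composition identities is fine as a sketch, but the proof as a whole needs the specific boundary-regularity result and the argument at $0$ to be complete.
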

\begin{proof}
Since $\omega_t$ is the $\eta$-transform of $\tau_t$ and $\tau_t\in \mathcal{ID}(\boxtimes,\mathbb{R}^+)$,
we have that $\omega_t=\eta_{\tau_{t/2}\boxtimes\tau_{t/2}}$. Then by \cite[Proposition 5.2]{BB2005} and
its proof, we see that $\omega_t$ extended continuous to $\mathbb{C}^+\cup\mathbb{R}\backslash\{0\}$,
and the extended function is analytic at points $\xi\in(0,+\infty)$ at which the 
extended function is not real. This proves (2) and part of (1).  

Recall that $\omega_t$ is the subordination function of $\eta_{\nu\boxtimes\sigma_t}$ with respect to $\eta_{\nu}$,
from a general result concerning subordination functions for free multiplicative convolution of 
arbitrary measures on $\mathbb{R}^+$ in
\cite[Remark 3.3]{Belinschi06}, we deduce that $\omega_t$ extends continuously to $0$ as well if $\nu$
is not a Dirac measure at one point. Note that $\lambda_t$ is compactly supported, thus 
$\tau_t$ is compactly supported as well if $\nu$ is a Dirac measure at one point, we conclude that 
$\omega_t$ also extends continuously to $0$ for this case. This finishes the proof.
\end{proof}

\newsection{The unit circle case}
In this section, we prove some regularity properties of $\Omega_{t, \mu}$ and then prove our main results
regarding distributions of free multiplicative Brownian motion on $\mathbb{T}$. 
We use polar coordinates in our discussion and parametrize $\mathbb{T}$ 
as $\mathbb{T}=\{e^{i\theta}: -\pi\leq \theta\leq \pi\}$. 
For fixed $t>0$, define
   \begin{equation}\nonumber
      h_t(r,\theta)=1-\frac{t}{2}\frac{1-r^2}{-\ln r}\int_{\mathbb{T}}\frac{1}{|1-re^{i\theta} \xi|^2}d\mu(\xi).
   \end{equation}
We have
    \begin{equation}\label{eq:3.4}
       \begin{split}
      \ln (|\Phi_{t, \mu}(re^{i\theta})|)&=\ln r+ \frac{t}{2}\int_{\mathbb{T}}\frac{1-r^2}{|1-re^{i\theta} \xi|^2}d\mu(\xi)\\
           &=(\ln r)h_t(r,\theta).
      \end{split}
    \end{equation}
    
To study the boundary of $\Omega_{t,\mu}$, we need the following lemma.
    \begin{lemma}\label{tech}
  Given $-1\leq y\leq 1$, define a function of $r$ by
     \begin{equation}\nonumber
        T_{y}(r)=\frac{1-r^2}{-\ln r}\frac{1}{1-2ry+r^2}
     \end{equation}
on the interval $(0,1)$,
then $T_{y}'(r)>0$ for all $r\in(0,1)$.
\end{lemma}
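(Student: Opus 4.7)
The plan is to show positivity of $T_y'(r)$ by examining the logarithmic derivative and reducing the two-variable inequality to a clean one-variable statement.

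First I would split the logarithmic derivative as
\begin{equation}\nonumber
\frac{T_y'(r)}{T_y(r)} = \frac{d}{dr}\log\!\left(\frac{1-r^2}{-\ln r}\right) + \frac{d}{dr}\log\!\left(\frac{1}{1-2ry+r^2}\right) = \left(\frac{-2r}{1-r^2} - \frac{1}{r\ln r}\right) + \frac{2(y-r)}{1-2ry+r^2}.
\end{equation}
Since $T_y(r) > 0$ on $(0,1)$, it suffices to prove the bracketed sum is positive.

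Next I would minimize the $y$-dependent term over $y \in [-1,1]$. A quick differentiation gives
\begin{equation}\nonumber
\frac{\partial}{\partial y}\frac{2(y-r)}{1-2ry+r^2} = \frac{2(1-r^2)}{(1-2ry+r^2)^2} > 0,
\end{equation}
so the minimum over $y$ is attained at $y=-1$, where the term equals $-2/(1+r)$. The worst-case inequality to prove is therefore
\begin{equation}\nonumber
\frac{-2r}{1-r^2} - \frac{1}{r\ln r} > \frac{2}{1+r}, \qquad r \in (0,1).
\end{equation}
Combining the rational pieces on the left and right yields the equivalent one-variable inequality $-1/(r\ln r) > 2/(1-r^2)$, or after clearing denominators (using $-\ln r > 0$ on $(0,1)$),
\begin{equation}\nonumber
\frac{1-r^2}{2r} > -\ln r.
\end{equation}

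Finally I would recognize the left-hand side as $\sinh(\log(1/r))$, since with $x = -\ln r > 0$ one has $\sinh x = (e^x - e^{-x})/2 = (1/r - r)/2$. The inequality becomes the standard $\sinh x > x$ for $x > 0$, which is immediate from comparing Taylor series (or from the fact that $\sinh x - x$ vanishes at $0$ with positive derivative $\cosh x - 1 \geq 0$). Equivalently, one can prove the inequality directly by setting $f(r) = (1-r^2)/(2r) + \ln r$, checking $f(1) = 0$, and computing $f'(r) = -(r-1)^2/(2r^2) \leq 0$, so $f$ decreases to $0$ and is positive on $(0,1)$.

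There is no genuine obstacle here; the only subtlety is noticing that the $y$-dependence can be handled by monotonicity so that the whole estimate collapses to a scalar inequality, which is then essentially $\sinh x > x$.
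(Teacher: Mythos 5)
Your proof is correct, and while it ends up reducing to a worst-case at $y=-1$ just as the paper does, the route there is genuinely cleaner. The paper substitutes $x=-\ln r$ immediately, differentiates the full quotient, and is left with the numerator $(-e^{4x}+4xe^{2x}+1)+y(-2xe^{3x}+2e^{3x}-2xe^x-2e^x)$; after noting the $y$-coefficient is negative it must then verify the rather opaque inequality $2xe^{3x}-2e^{3x}+2xe^x+2e^x<e^{4x}-4xe^{2x}-1$ by comparing Taylor expansions term by term. You instead pass to the logarithmic derivative, which additively separates the $(1-r^2)/(-\ln r)$ factor from the $(1-2ry+r^2)^{-1}$ factor; the monotonicity in $y$ is then transparent (the $\partial_y$ computation gives $2(1-r^2)/(1-2ry+r^2)^2>0$), and at $y=-1$ the rational pieces telescope dramatically to leave only $-1/(r\ln r)>2/(1-r^2)$, i.e.\ $\sinh(-\ln r)>-\ln r$. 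What your decomposition buys is that the two-variable reduction and the final scalar inequality are both standard and require no ad hoc Taylor-series bookkeeping; the paper's proof is more of a brute-force verification. Both are valid; yours is the one a reader can check mentally.

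One small stylistic point: when you write the reduced inequality as $-\tfrac{2r}{1-r^2}-\tfrac{1}{r\ln r}>\tfrac{2}{1+r}$ and then say "combining the rational pieces yields $-1/(r\ln r)>2/(1-r^2)$," it would be worth displaying the one-line computation $-\tfrac{2r}{1-r^2}-\tfrac{2}{1+r}=-\tfrac{2}{1-r^2}$ explicitly, since that cancellation is the crux of why the estimate collapses so nicely.
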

\begin{proof}
The substitution $x=-\ln r$ implies that it suffices to prove $f'(x)<0$ for $x\in(0,+\infty)$, where
     \begin{equation}\nonumber
       f(x)=\frac{1-e^{-2x}}{x}\frac{1}{1-2e^{-x}y+e^{-2x}}.
     \end{equation}
We have
     \begin{equation}\nonumber
        \begin{split}
       f'(x)
           = \frac{(-e^{4x}+4xe^{2x}+1)+y(-2xe^{3x}+2e^{3x}-2xe^x-2e^x)}{[x(e^{2x}-2e^x y+1)]^2}.
        \end{split}
     \end{equation}
By Taylor expansion, we can check that $-2x e^{3x}+2e^{3x}-2x e^x-2e^x <0$ for all $x>0$; thus, to prove $f'(x)<0$,
it is enough to show that
       \begin{equation}\label{eq:3.3}
          2x e^{3x}-2e^{3x}+2x e^x+2e^x < e^{4x}-4xe^{2x}-1.
       \end{equation}
It is easy to check that (\ref{eq:3.3}) holds for all $x\in (0,+\infty)$ by calculating 
the Taylor expansions in both sides of (\ref{eq:3.3}).
\end{proof}

Lemma \ref{tech} implies that the function $h_t(r,\theta)$ is a decreasing function of $r$ on (0,1) for fixed $\theta$. 
Define $h_t:[-\pi,\pi]\rightarrow \mathbb{R}\cup\{-\infty\}$ as follows:
        \begin{equation}\nonumber
          \begin{split}
             h_t(\theta)&=\lim_{r\rightarrow 1^{-}}h_t(r,\theta)\\
                  &=1-t\int_{-\pi}^{\pi}\frac{1}{|1-e^{i(\theta+x)}|^2}d\mu(e^{ix})\\
                  & =1-t\int_{-\pi}^{\pi}\frac{1}{2-2\cos(\theta+x)}d\mu(e^{ix}).
          \end{split}
        \end{equation}
We now let 
    \begin{equation}\nonumber
      \begin{split}
         U_{t, \mu}&=\left\{ -\pi\leq \theta\leq \pi : h_t(\theta)<0 \right\}\\
            &=\left\{-\pi\leq\theta\leq\pi : \int_{-\pi}^{\pi}\frac{1}{|1-e^{i(\theta+x)}|^2}d\mu(e^{ix})>\frac{1}{t}\right\}
      \end{split}
    \end{equation}
and $U_{t, \mu}^c=[-\pi,\pi]\backslash U_{t, \mu}$. We also define a function 
$v_t: [-\pi,\pi]\rightarrow (0,1]$ as
      \begin{equation}
        \begin{split}\label{eq:3.0001}
           v_t(\theta)&=\sup\left\{0< r<1: h_t(r,\theta)> 0 \right\}\\
                &=\sup\left\{0< r<1: \frac{1-r^2}{-2\ln r}\int_{-\pi}^{\pi}\frac{1}{|1-re^{i(\theta+x)}|^2}d\mu(e^{ix}) 
                < \frac{1}{t}\right\}.
        \end{split}
      \end{equation}

The next result regarding regularity of $\Omega_{t, \mu}$ is fundamental to our discussion. Note that
this type of result was proved in \cite{Zhong1} in the study of the density of $\lambda_t$. 
\begin{thm}\label{thm:3.2}
The sets $\Omega_{t, \mu}$ and $\partial\Omega_{t, \mu}$ can be characterized by the functions $h_t(r,\theta)$ and $v_t(\theta)$ as follows.
  \begin{enumerate}[$(1)$]
    \item $\Omega_{t, \mu}=\{re^{i\theta}: 0\leq r<v_t(\theta), \theta\in[-\pi,\pi]\}$.
    \item $\partial\Omega_{t, \mu}\cap \mathbb{D}=
    \{re^{i\theta}: \theta\in U_{t, \mu}, \text{and}\,\, h_t(r,\theta)=0 \}$.
  \end{enumerate}
\end{thm}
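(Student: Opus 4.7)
The plan is to translate $|\Phi_{t,\mu}(z)|<1$ into a condition on the sign of $h_t(r,\theta)$ and then extract everything from Lemma~\ref{tech} together with two easy limits. By \eqref{eq:3.4}, $\ln|\Phi_{t,\mu}(re^{i\theta})| = (\ln r)\,h_t(r,\theta)$, and since $\ln r < 0$ on $(0,1)$, the inequality $|\Phi_{t,\mu}(re^{i\theta})|<1$ is equivalent to $h_t(r,\theta)>0$. Thus identifying $\Omega_{t,\mu}$ reduces, ray by ray, to locating the positivity set of $h_t(\cdot,\theta)$ on $(0,1)$, and identifying $\partial\Omega_{t,\mu}\cap\mathbb{D}$ reduces to locating its zero set there.

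The main input is the monotonicity of $h_t(\cdot,\theta)$ in $r$. Writing $\xi=e^{ix}$, one has $|1-re^{i\theta}\xi|^{2}=1-2r\cos(\theta+x)+r^{2}$, so the integrand in $h_t(r,\theta)$ is exactly $T_{\cos(\theta+x)}(r)$. Lemma~\ref{tech}, applied pointwise in $x$ and integrated against $\mu$, shows that $h_t(r,\theta)$ is strictly decreasing in $r$ on $(0,1)$. Two straightforward limits complete the picture: $\lim_{r\to 0^+}h_t(r,\theta)=1$, since $\tfrac{1-r^{2}}{-\ln r}\to 0$ while the Poisson-type integrand stays bounded near $r=0$; and $\lim_{r\to 1^-}h_t(r,\theta)=h_t(\theta)$ by monotone convergence, using that $\tfrac{1-r^{2}}{-\ln r}\uparrow 2$ as $r\uparrow 1$.

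Combining these facts I split on whether $\theta\in U_{t,\mu}^{c}$ or $\theta\in U_{t,\mu}$. If $h_t(\theta)\geq 0$, strict monotonicity forces $h_t(r,\theta)>h_t(\theta)\geq 0$ for every $r\in(0,1)$, so $v_t(\theta)=1$ and the whole open radius $\{re^{i\theta}:0\leq r<1\}$ lies inside $\Omega_{t,\mu}$, with no point of it contributing to $\partial\Omega_{t,\mu}\cap\mathbb{D}$. If $h_t(\theta)<0$, continuity of $h_t(\cdot,\theta)$ together with the intermediate value theorem and strict monotonicity produce a unique $r^{\ast}\in(0,1)$ with $h_t(r^{\ast},\theta)=0$; then $v_t(\theta)=r^{\ast}$, $\{re^{i\theta}:0\leq r<r^{\ast}\}\subset\Omega_{t,\mu}$, and $r^{\ast}e^{i\theta}$ is the unique point of $\partial\Omega_{t,\mu}\cap\mathbb{D}$ on that ray. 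Assembling these ray-by-ray descriptions immediately yields (1) and (2).

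The argument is essentially mechanical once monotonicity and the two endpoint limits are in hand. The small points requiring care are interchanging $r$-differentiation with the integral against $\mu$ so that Lemma~\ref{tech} applies integrand-wise, which is routine by dominated convergence since the derivative of $T_y(r)$ is uniformly bounded in $y\in[-1,1]$ on compact subintervals of $(0,1)$, and remembering to intersect with $\mathbb{D}$ in statement (2) so that points of $\partial\mathbb{D}$ lying in $\partial\Omega_{t,\mu}$ (which occurs exactly when $\theta\in U_{t,\mu}^{c}$) are correctly excluded.
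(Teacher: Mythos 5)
Your argument is correct and follows the same route as the paper's: use \eqref{eq:3.4} to reduce the sublevel-set condition $|\Phi_{t,\mu}|<1$ to the sign condition $h_t(r,\theta)>0$, invoke Lemma~\ref{tech} for radial monotonicity of $h_t$, and split into the cases $\theta\in U_{t,\mu}$ and $\theta\in U_{t,\mu}^c$. You make explicit some steps the paper leaves implicit (the endpoint limits $h_t(r,\theta)\to 1$ as $r\to 0^+$ and $h_t(r,\theta)\to h_t(\theta)$ as $r\to 1^-$, and the intermediate value argument producing the unique zero $v_t(\theta)$), but the underlying approach is identical.
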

\begin{proof}
From Proposition \ref{prop:2.3} and (\ref{eq:3.4}), we see that
      \begin{equation}\nonumber
          \begin{split}
         \Omega_{t, \mu}\cap\mathbb{D}
                   =\{re^{i\theta}: 0\leq r<1,-\pi\leq \theta\leq \pi \,\text{and}\, h_t(r,\theta)>0\}.
          \end{split}         
      \end{equation}
Since $h_{t,\theta}(r):=h_t(r,\theta)$ is a decreasing function of $r$, we then deduce the following assertions:
     \begin{enumerate}[(i)]
       \item If $\theta\in U_{t, \mu}$, then $\{re^{i\theta}:0\leq r<v_t(\theta)\} \subset \Omega_{t, \mu} $ 
           and $v_t(\theta)e^{i\theta}\in\partial\Omega_{t, \mu}$.
       \item If $\theta\in U_{t, \mu}^c$, then $\{re^{i\theta}:0\leq r<1\}\subset \Omega_{t, \mu}$ and $e^{i\theta}\in\partial\Omega_{t, \mu}$.
     \end{enumerate}
Notice that for $\theta\in U_{t, \mu}^c$, we have that $v_t(\theta)=1$, we thus proved (1) and (2). 
\end{proof}

\begin{cor}
 The function $\eta_{\mu}$ has a continuous extension to $\overline{\Omega_{t, \mu}}$, and 
 the function $\eta_{\mu_t}$ has a continuous extension to $\overline{\mathbb{D}}$.
\end{cor}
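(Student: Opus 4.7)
The second assertion will follow from the first by composition. Indeed, Proposition \ref{prop:2.3} supplies a continuous extension $\eta_t\colon\overline{\mathbb{D}}\to\overline{\Omega_{t,\mu}}$, so once $\eta_\mu$ is shown to admit a continuous extension to $\overline{\Omega_{t,\mu}}$, the composition $\eta_\mu\circ\eta_t$ provides a continuous extension of $\eta_{\mu_t}$ to $\overline{\mathbb{D}}$.

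For the first assertion, I work with $\psi_\mu$ and recover $\eta_\mu=\psi_\mu/(1+\psi_\mu)$ at the end. By Theorem \ref{thm:3.2}, $\partial\Omega_{t,\mu}$ splits into a portion lying in the open disc $\mathbb{D}$, where $\psi_\mu$ is already analytic, and a portion on $\mathbb{T}$ consisting of the points $e^{i\theta_0}$ with $\theta_0\in U_{t,\mu}^c$. For each such $\theta_0$ the defining inequality $\int_{-\pi}^{\pi}\frac{d\mu(e^{ix})}{|1-e^{i(\theta_0+x)}|^2}\le 1/t$ makes it natural to set
\[
\psi_\mu(e^{i\theta_0})\ :=\ \int_{\mathbb{T}}\frac{\xi e^{i\theta_0}}{1-\xi e^{i\theta_0}}\,d\mu(\xi),
\]
the integrand lying in $L^2(\mu)$ and hence being absolutely integrable.

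The critical step is continuity at such a boundary point. Given a sequence $z_n=r_n e^{i\theta_n}\in\Omega_{t,\mu}$ converging to $e^{i\theta_0}$, the inequality $h_t(r_n,\theta_n)\ge 0$ from Theorem \ref{thm:3.2} translates directly into the uniform bound
\[
\int_{\mathbb{T}}\frac{d\mu(\xi)}{|1-\xi z_n|^2}\ \le\ \frac{-2\ln r_n}{t(1-r_n^2)}\ \longrightarrow\ \frac{1}{t},
\]
so the functions $\xi\mapsto \xi z_n/(1-\xi z_n)$ are uniformly square-integrable with respect to $\mu$. By Cauchy--Schwarz this yields uniform integrability. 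Since the integrand converges pointwise to $\xi e^{i\theta_0}/(1-\xi e^{i\theta_0})$ for every $\xi\neq e^{-i\theta_0}$, and since the finiteness of $\int d\mu/|1-\xi e^{i\theta_0}|^2$ forces $\mu(\{e^{-i\theta_0}\})=0$, Vitali's convergence theorem gives $\psi_\mu(z_n)\to\psi_\mu(e^{i\theta_0})$. A direct computation shows $\Re\psi_\mu(e^{i\theta_0})=-\tfrac12$, while $\Re\psi_\mu(z)>-\tfrac12$ for $z\in\mathbb{D}$; in particular $1+\psi_\mu$ never vanishes on $\overline{\Omega_{t,\mu}}$, and $\eta_\mu$ inherits the desired continuous extension.

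The main obstacle is producing a dominating control on $\xi\mapsto \xi z_n/(1-\xi z_n)$ valid for \emph{non-radial} approaches to $\partial\Omega_{t,\mu}\cap\mathbb{T}$; this is exactly what the geometric characterization of $\Omega_{t,\mu}$ in Theorem \ref{thm:3.2} supplies, and without it a pointwise argument confined to radii would be insufficient for general approach.
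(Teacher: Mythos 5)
Your proof is correct, but it follows a genuinely different route from the paper's. The paper works entirely with the functional equation \refto{2.4}: it observes that $z/\eta_t(z)$ extends continuously to $\overline{\mathbb{D}}$ and never vanishes (because $\Phi_{t,\mu}\circ\eta_t=\mathrm{id}$ forces $\eta_t\ne 0$ off the origin), establishes the bound $|\Im\, h(\eta_\mu(\eta_t(z)))|\le 1$ on the exponent by the same use of Theorem \ref{thm:3.2} that you make, takes a single-valued logarithm, and then inverts the Cayley map $w\mapsto\frac{t}{2}\frac{1+w}{1-w}$ to read off the continuous extension of $\eta_\mu\circ\eta_t$, which becomes a continuous extension of $\eta_\mu$ after precomposing with $\eta_t^{-1}$. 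You instead define the boundary values of $\psi_\mu$ directly by the Herglotz integral, observe that the defining inequality for $\overline{\Omega_{t,\mu}}\cap\mathbb T$ (namely $h_t\ge 0$) yields the uniform $L^2(\mu)$ bound $\int|1-\xi z_n|^{-2}\,d\mu(\xi)\le\frac{-2\ln r_n}{t(1-r_n^2)}$, and then invoke Vitali together with the observation that $\mu(\{e^{-i\theta_0}\})=0$ to get pointwise convergence $\psi_\mu(z_n)\to\psi_\mu(e^{i\theta_0})$; the estimate $\Re\psi_\mu\ge -1/2$ then lets you divide to get $\eta_\mu=\psi_\mu/(1+\psi_\mu)$. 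The input is the same geometric fact (Theorem \ref{thm:3.2}), but the paper exploits the specific exponential structure of $\Phi_{t,\mu}$, whereas your measure-theoretic argument is more elementary and would port to any situation where one has a uniform $L^2$ bound on the Cauchy kernel along the boundary. One small point of hygiene: to claim continuity on all of $\overline{\Omega_{t,\mu}}$ you should note that the Vitali argument applies equally to sequences $z_n\in\overline{\Omega_{t,\mu}}$ (where $h_t(r_n,\theta_n)\ge 0$ rather than $>0$, which is what you actually wrote and what the bound needs), not just to sequences from the open domain.
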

\begin{proof}
 From Lemma \ref{lemma:2.2}, we see that 
     \begin{equation}\label{eq:2.3}
      \Sigma_{\rho_t}(\eta_{\rho_t}(z))= \Sigma_{\lambda_t}(\eta_{\mu}(\eta_{\rho_t}(z))).
     \end{equation}
The identity $\eta_{\rho_t}(z)\Sigma_{\rho_t}(\eta_{\rho_t}(z))=\Phi_{t, \mu}(\eta_{\rho_t}(z))=z$ and (\ref{eq:2.3}) imply that
      \begin{equation}\label{eq:2.4}
         \frac{z}{\eta_{\rho_t}(z)}=\exp\left(\frac{t}{2}\frac{1+\eta_{\mu}(\eta_{\rho_t}(z))}
         {1-\eta_{\mu}(\eta_{\rho_t}(z))} \right)
      \end{equation} 
holds for $z\in\mathbb{D}$.
The identity $\Phi_{t, \mu}(\eta_{\rho_t}(z))=z$ also implies that the function $\eta_{\rho_t}$ is never zero in $\mathbb{D}\backslash\{0\}$,
and thus $z/\eta_{\rho_t}(z)$ is never zero 
on $\mathbb{D}$. 
Since $\eta_{\rho_t}=\eta_t$,
it follows from Proposition \ref{prop:2.3} that 
$\eta_{\rho_t}$ extends continuously to $\overline{\mathbb{D}}$. 

Let $h(z)=\frac{t}{2}\frac{1+z}{1-z}$, then by the definition of $\eta$-transform, we have 
\[
 h(\eta_{\mu}(z))=\frac{t}{2}\int_{\mathbb{T}}\frac{1+z\xi}{1-z\xi}d\mu(\xi).
\]
Note that the exponent in the right hand side of (\ref{eq:2.4}) equals
$h(\eta_{\mu}(\eta_{\rho_t}(z)))$. Given $z\in\mathbb{D}$, we 
set $\eta_{\rho_t}(z)=re^{i\theta}$, where $r<v_t(\theta)\leq 1$ according to Theorem \ref{thm:3.2}. We then have that
\begin{equation}
  \begin{split}
   |\Im h(\eta_{\mu}(\eta_{\rho_t}(z)))|&=|\Im h(\eta_{\mu}(re^{i\theta}))| \\
                 &=\left|t\int_{-\pi}^{\pi}\frac{r\sin(\theta+x)}{|1-re^{i(\theta+x)}|^2}d\mu(e^{ix})   \right|\\
                 &\leq \frac{2r\ln r}{r^2-1}\leq 1,
  \end{split}   
\end{equation}
where we used the definition of $v_t$ in (\ref{eq:3.0001}).

Thus we can take logarithms in both sides of (\ref{eq:2.4}).
The assertion follows easily by continuous extension and the fact that
$\eta_{\rho_t}(\overline{\mathbb{D}})=\overline{\Omega_{t, \mu}}$.
\end{proof}

\begin{lemma}\label{lemma:3.3}
 The support of $\mu$ is contained in the closure of the set $U_{t, \mu}^{-1} $ which is defined by
        \begin{equation}\nonumber
           U_{t, \mu}^{-1} =\left\{e^{-i\theta}: \theta\in U_{t, \mu}\right\}.
        \end{equation}
\end{lemma}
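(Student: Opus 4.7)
My plan is to argue by contradiction. Suppose there is a point $e^{-i\theta_0} \in \mathrm{supp}(\mu)$ lying outside $\overline{U_{t,\mu}^{-1}}$; equivalently, $\theta_0$ admits an open neighborhood $(\theta_0-\delta, \theta_0+\delta)$ disjoint from $U_{t,\mu}$. Then the defining inequality of $U_{t,\mu}$ fails throughout this neighborhood, giving
\[
\int_{-\pi}^{\pi} \frac{d\mu(e^{ix})}{|1-e^{i(\theta+x)}|^2} \le \frac{1}{t}
\]
for every $\theta \in (\theta_0-\delta, \theta_0+\delta)$.

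The crucial step is to integrate this uniform bound in $\theta$ over an interval of length $2\delta$ and invoke Tonelli's theorem (the integrand is nonnegative) to swap the order of integration:
\[
\int_{-\pi}^{\pi} \left( \int_{\theta_0-\delta}^{\theta_0+\delta} \frac{d\theta}{|1-e^{i(\theta+x)}|^2} \right) d\mu(e^{ix}) \le \frac{2\delta}{t}.
\]
Substituting $u = \theta + x$ in the inner integral and using the elementary identity $|1-e^{iu}|^2 = 4\sin^2(u/2)$, which behaves like $u^2$ near $u = 0$, I would conclude that the inner integral diverges whenever $0$ lies in $(\theta_0+x-\delta, \theta_0+x+\delta)$, i.e., whenever $x$ belongs to the open interval $(-\theta_0-\delta, -\theta_0+\delta)$. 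Since the outer integral is finite, $\mu$ must assign zero mass to the arc $\{e^{ix}: x \in (-\theta_0-\delta, -\theta_0+\delta)\}$.

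But that arc is an open neighborhood of $e^{-i\theta_0}$ in $\mathbb{T}$, contradicting the assumption that $e^{-i\theta_0} \in \mathrm{supp}(\mu)$. The only real obstacle is a bookkeeping one: one needs to record cleanly that the inner integral is genuinely $+\infty$ (rather than merely unbounded) on the critical arc, which follows from the non-integrability of $u^{-2}$ at the origin. With that in hand, the chain \emph{uniform bound $\Rightarrow$ Tonelli $\Rightarrow$ singularity divergence $\Rightarrow$ zero $\mu$-mass on a neighborhood of $e^{-i\theta_0}$} closes the contradiction and establishes $\mathrm{supp}(\mu) \subseteq \overline{U_{t,\mu}^{-1}}$.
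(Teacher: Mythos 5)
Your argument is correct, and it is a genuinely different route from the paper's. The paper proceeds by discretization: it partitions a subinterval of the complement of $\overline{U_{t,\mu}}$ into $n$ pieces $[\alpha_k,\alpha_{k+1}]$, pairs each piece with the corresponding arc on $\mathbb{T}$, uses the pointwise bound $\int\frac{d\mu}{|1-e^{i(\theta+x)}|^2}\le 1/t$ restricted to that arc to obtain $\mu(\text{arc})\le\frac{1}{t}|e^{-i\alpha_k}-e^{-i\alpha_{k+1}}|^2$, and then sums over $k$ and sends $n\to\infty$ so the total bound tends to zero. You instead integrate the same pointwise bound in $\theta$, swap the order of integration via Tonelli, and exploit the non-integrability of the Poisson-type kernel $1/|1-e^{iu}|^2\sim 1/u^2$ at $u=0$: the inner $\theta$-integral is $+\infty$ for every $x$ in a whole arc around $-\theta_0$, forcing $\mu$ of that arc to vanish since the double integral is finite. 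Both are contradiction arguments built on the same uniform bound on the complement of $U_{t,\mu}$; yours trades the elementary Riemann-sum estimate for a one-line appeal to Tonelli plus the singularity of the kernel, which is cleaner conceptually at the cost of a slightly heavier tool. Two small bookkeeping points you should record: as in the paper, take $\theta_0\ne\pm\pi$ without loss of generality (so the arithmetic of $\theta+x$ modulo $2\pi$ does not intrude), and note explicitly that because $U_{t,\mu}$ is defined via a strict inequality and $h_t(\pi)=h_t(-\pi)$, working with the angle parametrization of the closure $\overline{U_{t,\mu}^{-1}}$ on the circle is legitimate.
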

\begin{proof}
   Let $\theta_0 \in [-\pi,\pi]\backslash \overline{U_{t, \mu}}$, where $\overline{U_{t, \mu}}$ is the closure of $U_{t, \mu}$.
Without loss of generality, we may assume that $\theta_0$ is different from $\pi$ and $-\pi$,
then there is some $\epsilon >0$, such that 
     \begin{equation}\nonumber
         [\theta_0-\epsilon,\theta_0+\epsilon]\subset (-\pi,\pi)\backslash\overline{U_{t, \mu}}.
     \end{equation}
For any integer $n\geq 2$, we define $\alpha_k=\theta_0-\epsilon+2k\epsilon/n$ for all $0\leq k \leq n$, then
the sets $[\alpha_k,\alpha_{k+1}]$ are contained in $(-\pi,\pi)\backslash \overline{U_{t, \mu}}$. 
Given $\theta\in[\alpha_k,\alpha_{k+1}]$, then we have that
      \begin{equation}\nonumber
         \begin{split}
            \frac{1}{t}&\geq \int_{-\alpha_{k+1}}^{-\alpha_k}\frac{1}{|1-e^{i(\theta+x)}|^2}d\mu(e^{ix})\\
                    &=\int_{-\alpha_{k+1}}^{-\alpha_k}\frac{1}{|e^{i\theta}-e^{-ix}|^2}d\mu(e^{ix})\\
                    &\geq\frac{\mu(\{e^{i\theta}: -\alpha_{k+1}\leq \theta\leq-\alpha_k\})}{|e^{-i\alpha_k}-e^{-i\alpha_{k+1}}|^2}.
         \end{split}
      \end{equation}
This implies that
         \begin{equation}\nonumber
           \begin{split}
              \mu(\{e^{i\theta}: -(\theta_0+\epsilon)\leq \theta \leq -(\theta_0-\epsilon) \})
                &\leq \sum_{k=0}^{n-1} \mu(\{e^{i\theta}: -\alpha_{k+1}\leq \theta\leq-\alpha_k\})\\
                &\leq\frac{1}{t}\sum_{k=0}^{n-1}|e^{-i\alpha_k}-e^{-i\alpha_{k+1}} |^2.
           \end{split}
         \end{equation}
Notice that $\sum_{k=0}^{n-1}|e^{-i\alpha_k}-e^{-i\alpha_{k+1}} |^2$ can be arbitrarily small if we choose
$n$ large enough, therefore 
    \begin{equation}\nonumber
      \mu(\{e^{i\theta}: -(\theta_0+\epsilon)\leq \theta \leq -(\theta_0-\epsilon) \})=0
    \end{equation}
and our assertion follows.
\end{proof}
\begin{prop}\label{prop:3.4}
 For any interval $ (\alpha,\beta)\subset U_{t, \mu}^c $, the function 
  $f(\theta)=\int_{-\pi}^{\pi}\frac{1}{|1-e^{i(\theta+x)}|^2}d\mu(e^{ix})$
   is strictly convex in $(\alpha,\beta)$.
\end{prop}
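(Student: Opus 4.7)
The plan is to recognize $f$ as an average against $\mu$ of shifts of a single strictly convex function and then deduce strict convexity of $f$ from the strict convexity of the integrand. Writing
\[
f(\theta)=\int_{-\pi}^{\pi} g(\theta+x)\,d\mu(e^{ix}),\qquad g(y)=\frac{1}{|1-e^{iy}|^{2}}=\frac{1}{2(1-\cos y)},
\]
a direct computation (quotient rule, using $(1-\cos y)'=\sin y$ and $(1-\cos y)''=\cos y$) yields
\[
g''(y)=\frac{2+\cos y}{2(1-\cos y)^{2}}>0\quad\text{for every }y\notin 2\pi\mathbb{Z},
\]
so $g$ is strictly convex on each of its smoothness intervals $(2\pi k,\,2\pi(k+1))$.

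For the strategy to work I must verify that, for every $x$ in the support of $\mu$, the shifted function $\theta\mapsto g(\theta+x)$ is finite throughout $(\alpha,\beta)$, that is, $\theta+x\notin 2\pi\mathbb{Z}$ for all such $\theta,x$. This is where Lemma~\ref{lemma:3.3} enters. The function $f$ is lower semi-continuous (Fatou's lemma), so $U_{t,\mu}=\{\theta:f(\theta)>1/t\}$ is open and $U_{t,\mu}^{c}$ is closed; the hypothesis $(\alpha,\beta)\subset U_{t,\mu}^{c}$ combined with the openness of $(\alpha,\beta)$ forces $(\alpha,\beta)\subset\mathrm{int}(U_{t,\mu}^{c})=(\overline{U_{t,\mu}})^{c}$. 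Lemma~\ref{lemma:3.3} says every $e^{ix}\in\mathrm{supp}(\mu)$ has $-x\in\overline{U_{t,\mu}}$ modulo $2\pi$, so for any such $x$ and any $\theta\in(\alpha,\beta)$ we get $\theta\ne-x$ modulo $2\pi$, i.e.\ $\theta+x\notin 2\pi\mathbb{Z}$, as required. In particular $\theta\mapsto g(\theta+x)$ is smooth and strictly convex on all of $(\alpha,\beta)$ for every $x\in\mathrm{supp}(\mu)$.

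With this separation in hand, strict convexity of $f$ on $(\alpha,\beta)$ follows cleanly. For $\theta_{1}\ne\theta_{2}$ in $(\alpha,\beta)$ and $\lambda\in(0,1)$, strict convexity of $g$ on the component of $\mathbb{R}\setminus 2\pi\mathbb{Z}$ traversed by $\theta\mapsto\theta+x$ as $\theta$ runs over $(\alpha,\beta)$ gives
\[
g(\lambda\theta_{1}+(1-\lambda)\theta_{2}+x)<\lambda g(\theta_{1}+x)+(1-\lambda)g(\theta_{2}+x)
\]
pointwise for every $x\in\mathrm{supp}(\mu)$; integrating against the probability measure $\mu$ preserves the strict inequality (the difference of the two sides is a strictly positive $\mu$-integrable function on $\mathrm{supp}(\mu)$). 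The main, in fact the only, obstacle in this plan is the separation argument of the middle paragraph---correctly converting Lemma~\ref{lemma:3.3} (stated in terms of $U_{t,\mu}^{-1}$) into disjointness of $(\alpha,\beta)$ from $-\overline{U_{t,\mu}}$; the convexity computation and the averaging step are then routine.
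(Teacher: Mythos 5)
Your proof is correct, and the core computation agrees with the paper's: both you and the paper reduce the problem to the positivity of the second derivative of the single kernel $g(y)=\tfrac{1}{2(1-\cos y)}$, and both invoke Lemma~\ref{lemma:3.3} to ensure that $\theta+x$ stays away from the singularities of $g$ for $\theta\in(\alpha,\beta)$ and $e^{ix}\in\operatorname{supp}\mu$. (Your closed form $g''(y)=\tfrac{2+\cos y}{2(1-\cos y)^2}$ is just the factored version of the paper's numerator $1-\cos y+\sin^2 y=(1-\cos y)(2+\cos y)$.) The one genuine difference is in how the conclusion is drawn: the paper differentiates under the integral sign (implicitly justified by Lemma~\ref{lemma:3.3}, which makes $f$ analytic on $(\alpha,\beta)$) and then reads off $f''>0$, whereas you bypass the interchange of derivative and integral by writing the pointwise strict convexity inequality for $g$ and integrating it against $\mu$. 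Your route is marginally more elementary and sidesteps the (routine but unstated) justification of differentiation under the integral; the paper's route is more direct once that justification is accepted. Your intermediate step showing $(\alpha,\beta)\subset\operatorname{int}(U_{t,\mu}^c)=(\overline{U_{t,\mu}})^c$ via lower semi-continuity of $f$ is a correct and useful elaboration of what the paper leaves implicit.
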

\begin{proof}
We notice that 
  \begin{equation}\nonumber
    \int_{-\pi}^{\pi}\frac{1}{|1-e^{i(\theta+x)}|^2}d\mu(e^{ix})=
      \int_{-\pi}^{\pi}\frac{1}{|e^{ix}-e^{-i\theta}|^2}d\mu(e^{ix})=\int_{-\pi}^{\pi}\frac{1}{2-2\cos(\theta+x)}d\mu(e^{ix}),
  \end{equation}
from Lemma \ref{lemma:3.3}, we deduce that $f$ is analytic on $(\alpha, \beta)$. We calculate its 
second derivative
   \begin{equation}\nonumber
      f''(\theta)=\frac{1}{2}\int_{-\pi}^{\pi}\frac{1-\cos(\theta+x)+\sin^2(\theta+x) }{(1-\cos(\theta+x))^3 }d\mu(e^{ix})>0,
   \end{equation}
which implies the strict convexity of $f$.
\end{proof}
\begin{rmk}
\emph{
Proposition \ref{prop:3.4} implies that $\int_{-\pi}^{\pi}\frac{1}{|1-e^{i(\theta+x)}|^2}d\mu(e^{ix})<1/t$
for $\theta$ in the interior of $U_{t, \mu}^c$. }
\end{rmk}

We are now ready to discuss the density of the measure $\mu_t=\mu\boxtimes\lambda_t$. 
\begin{prop}
 The measure $\mu_t$ is absolutely continuous with respect to Lebesgue measure. Moreover,
the density is analytic whenever it is positive. 
\end{prop}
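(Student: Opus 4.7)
The plan is to deduce both claims from an explicit formula for the density of $\mu_t$ in terms of $\eta_t$ alone, obtained by combining identity (\ref{eq:2.4}) with the Poisson--Stieltjes inversion (\ref{eq:2.5}). The point is that (\ref{eq:2.4}) bypasses any direct analysis of $\eta_{\mu}$ on $\partial\Omega_{t, \mu}$: the boundary behavior of $\mathrm{Re}\bigl((1+\eta_{\mu_t})/(1-\eta_{\mu_t})\bigr)$ can be read off entirely from $\eta_t$.

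Concretely, I would take the real part in (\ref{eq:2.4}) with $z=re^{i\theta}$ to obtain
\begin{equation*}
   \frac{1-|\eta_{\mu_t}(re^{i\theta})|^2}{|1-\eta_{\mu_t}(re^{i\theta})|^2}
   = \frac{2}{t}\bigl(\log r-\log|\eta_t(re^{i\theta})|\bigr).
\end{equation*}
By Proposition \ref{prop:2.3}, $\eta_t=\eta_{\rho_t}$ extends continuously to $\overline{\mathbb{D}}$; since $\eta_t$ is one-to-one on $\overline{\mathbb{D}}$ with $\eta_t(0)=0$ (as already exploited in the preceding corollary via $\Phi_{t, \mu}(\eta_t(z))=z$), it is nonvanishing on $\overline{\mathbb{D}}\setminus\{0\}$. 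Hence $-\log|\eta_t(e^{i\theta})|$ is a nonnegative, continuous, bounded function of $\theta\in[-\pi,\pi]$. Letting $r\to 1^-$ and invoking (\ref{eq:2.5}), the density of $d\mu_t(e^{-i\theta})$ with respect to $d\theta$ exists as the bounded continuous function
\begin{equation*}
   -\frac{1}{\pi t}\log|\eta_t(e^{i\theta})|,
\end{equation*}
and the finiteness of this expression everywhere rules out any singular component, which gives absolute continuity.

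For the analyticity claim, wherever the density is positive one has $|\eta_t(e^{i\theta})|<1$, i.e.\ $\eta_t(e^{i\theta})\in\mathbb{D}$. Proposition \ref{prop:2.3}(3) then provides an analytic continuation of $\eta_t$ to a complex neighborhood of $e^{i\theta}$, and by continuity $\eta_t$ remains nonvanishing there; a local branch of $\log\eta_t$ is therefore analytic, so $\log|\eta_t|=\mathrm{Re}\,\log\eta_t$ is real-analytic in $\theta$ on that neighborhood, and so is the density. The main technical content is already carried by Proposition \ref{prop:2.3}, both for the continuous boundary extension and for the analytic continuation property; the remaining work is bookkeeping through the identities (\ref{eq:2.4}) and (\ref{eq:2.5}) together with the observation that, thanks to the uniform continuity of $\eta_t$ on $\overline{\mathbb{D}}$, the $r\to 1^-$ limit is uniform in $\theta$, so that the Stieltjes limit genuinely coincides with the density.
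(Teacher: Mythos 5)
Your proof is correct and, for the absolute continuity part, takes a genuinely different route from the paper. The paper appeals to an external result (\cite[Lemma 5.1]{Zhong1}) to conclude that $\eta_{\lambda_t}(\overline{\mathbb{D}})$, and hence by subordination $\eta_{\mu_t}(\overline{\mathbb{D}})$, omits the point $1$, whence $\Re\bigl((1+\eta_{\mu_t})/(1-\eta_{\mu_t})\bigr)$ is bounded on $\overline{\mathbb{D}}$. You instead read off this real part explicitly from (\ref{eq:2.4}) as $\tfrac{2}{t}\bigl(\log r-\log|\eta_t(re^{i\theta})|\bigr)$, and deduce boundedness and uniform convergence as $r\to1^-$ from the facts, supplied by Proposition \ref{prop:2.3}(1), that $\eta_t$ extends continuously to $\overline{\mathbb{D}}$ and is one-to-one there (so nonvanishing on $\overline{\mathbb{D}}\setminus\{0\}$). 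This is more self-contained — it avoids the citation — and as a byproduct yields the closed-form density $-\tfrac{1}{\pi t}\log|\eta_t(e^{i\theta})|$, which is exactly the content of Theorem \ref{thm:3.7} once one reparametrizes via $\Psi_{t,\mu}$ (since $\eta_t(\Psi_{t,\mu}(e^{i\theta}))=v_t(\theta)e^{i\theta}$). For the analyticity claim, your argument and the paper's coincide in substance: positivity of the density forces $|\eta_t(e^{i\theta})|<1$, and Proposition \ref{prop:2.3}(3) then gives analytic continuation of $\eta_t$ (and hence, via a local branch of $\log\eta_t$, real-analyticity of $\log|\eta_t|$) near $e^{i\theta}$. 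The only point worth stating explicitly, which you do handle, is that the boundedness of the radial limit of the Poisson kernel already kills any singular part of $\mu_t$ by the Fatou-type dichotomy built into (\ref{eq:2.5}).
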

\begin{proof}
By \cite[Lemma 5.1]{Zhong1}, $\eta_{\lambda_t}(\overline{\mathbb{D}})$ does not contain $1$. Since
$\eta_{\mu_t}$ is subordinated with respect to $\eta_{\lambda_t}$, $\eta_{\mu_t}(\overline{\mathbb{D}})$ does
not contain $1$ either. Then from (\ref{eq:2.5}), we see that 
$\Re\left(\frac{1+\eta_{\mu_t}(e^{i\theta})}{1-\eta_{\mu_t}(e^{i\theta})}  \right)$ is bounded, which implies
that $\mu_t$ has no singular part
with respect to 
Lebesgue measure, and thus $\mu_t$ is absolutely continuous with respect to 
Lebesgue measure. 

We rewrite (\ref{eq:2.4}) as 
    \begin{equation}\nonumber
         \frac{z}{\eta_t(z)}=\exp\left(\frac{t}{2}\frac{1+\eta_{\mu_t(z)}}{1-\eta_{\mu_t(z)} }\right).
      \end{equation}
If the density of $\mu_t$ is positive at $e^{-i\theta}$, then 
by (\ref{eq:2.5}), we have $|\eta_{\mu_t}(e^{i\theta})|<1$, which implies that $|\eta_t(e^{i\theta})|<1$.
The analyticity of the density follows from part (3) of Proposition \ref{prop:2.3}.
\end{proof}
Define a map $\Psi_{t, \mu}:\mathbb{T}\rightarrow \mathbb{T}$ by
   \begin{equation}\nonumber
      \Psi_{t, \mu}(e^{i\theta})=\Phi_{t, \mu}(v_t(\theta)e^{i\theta}).
   \end{equation}
Recall that $|\Phi_{t, \mu}(v_t(\theta)e^{i\theta})|=1$, we calculate 
    \begin{equation}\nonumber
      \arg(\Phi_{t, \mu}(v_t(\theta)e^{i\theta}))=\theta+t\int_{-\pi}^{\pi}
      \frac{v_t(\theta)\sin(\theta+x)}{|1-v_t(\theta)e^{i(\theta+x)}|^2}d\mu(e^{ix}).
    \end{equation}
\begin{prop}
 The map $e^{i\theta}\rightarrow v_t(\theta)e^{i\theta}$ is a homeomorphism from $\mathbb{T}$ onto $\partial\Omega_{t, \mu}$;
and the map $\Psi_{t, \mu}$ is a homeomorphism from $\mathbb{T}$ onto $\mathbb{T}$.
\end{prop}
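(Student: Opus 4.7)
The plan is to factor the problem into two stages. First I would establish that the map $\alpha\colon e^{i\theta}\mapsto v_t(\theta)e^{i\theta}$ is a homeomorphism $\mathbb{T}\to\partial\Omega_{t,\mu}$; then I would identify $\Psi_{t,\mu}$ with the composition $(\eta_t^{-1}|_{\partial\Omega_{t,\mu}})\circ\alpha$. Since Proposition \ref{prop:2.3} already supplies $\eta_t$ as a homeomorphism $\overline{\mathbb{D}}\to\overline{\Omega_{t,\mu}}$ (in particular $\eta_t^{-1}|_{\partial\Omega_{t,\mu}}\colon\partial\Omega_{t,\mu}\to\mathbb{T}$ is a homeomorphism), the second assertion will reduce to the first.

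For the first stage, injectivity of $\alpha$ is immediate because $v_t(\theta)>0$ for every $\theta$ (the set $\{r:h_t(r,\theta)>0\}$ contains a neighborhood of $0$), so distinct rays carry distinct points. Surjectivity onto $\partial\Omega_{t,\mu}$ follows from the description extracted in the proof of Theorem \ref{thm:3.2}, namely $\partial\Omega_{t,\mu}=\{v_t(\theta)e^{i\theta}:\theta\in[-\pi,\pi]\}$. Because $\mathbb{T}$ is compact and $\partial\Omega_{t,\mu}$ Hausdorff, the homeomorphism claim reduces to continuity of $v_t$. On the interior of $U_{t,\mu}^c$ we have $v_t\equiv 1$; on $U_{t,\mu}$ the relation $h_t(v_t(\theta),\theta)=0$ together with $\partial_r h_t<0$, which is a reformulation of Lemma \ref{tech}, yields smoothness of $v_t$ via the implicit function theorem.

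The delicate step, and the main obstacle, is continuity at a boundary point $\theta_0\in\partial U_{t,\mu}$, where $h_t(\theta_0)=\lim_{r\to 1^-}h_t(r,\theta_0)=0$. Suppose $\theta_n\in U_{t,\mu}$, $\theta_n\to\theta_0$, and $v_t(\theta_n)\to r_0$ along a subsequence. If $r_0<1$, the joint continuity of $h_t(r,\theta)$ on $(0,1)\times[-\pi,\pi]$ gives $h_t(r_0,\theta_0)=0$, but Lemma \ref{tech} asserts that $h_t(\,\cdot\,,\theta_0)$ is strictly decreasing on $(0,1)$, hence $\lim_{r\to 1^-}h_t(r,\theta_0)<h_t(r_0,\theta_0)=0$, contradicting $h_t(\theta_0)=0$. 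Therefore $r_0=1$, and $v_t(\theta_n)\to 1=v_t(\theta_0)$, as needed.

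For the second stage, Proposition \ref{prop:2.3} gives $\Phi_{t,\mu}=\eta_t^{-1}$ on $\Omega_{t,\mu}$, and $\eta_t^{-1}$ extends continuously to $\overline{\Omega_{t,\mu}}$. At any point $v_t(\theta)e^{i\theta}\in\partial\Omega_{t,\mu}$, approaching radially from inside $\Omega_{t,\mu}$ and invoking the continuity of $\Phi_{t,\mu}$ at interior points of $\mathbb{D}$ (the case $v_t(\theta)<1$) and Lemma \ref{lemma:3.3} to keep $\mathrm{supp}(\mu)$ away from $e^{-i\theta}$ in the boundary case, the explicit formula for $\Phi_{t,\mu}$ passes to the limit and agrees with the continuous extension of $\eta_t^{-1}$. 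Thus $\Psi_{t,\mu}=(\eta_t^{-1}|_{\partial\Omega_{t,\mu}})\circ\alpha$ is a composition of homeomorphisms, completing the proof.
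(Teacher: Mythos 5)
Your proof is correct and is essentially the paper's argument, just carried out in detail: the paper dismisses the continuity of $v_t$ with ``it follows easily from the definition,'' and the factorization $\Psi_{t,\mu}=\bigl(\eta_t^{-1}|_{\partial\Omega_{t,\mu}}\bigr)\circ\alpha$ with $\eta_t$ a compact-to-Hausdorff homeomorphism is precisely what the paper invokes via the continuous extension of $\Phi_{t,\mu}$ from Proposition~\ref{prop:2.3}. Your subsequence argument at $\partial U_{t,\mu}$ is the genuine content the paper skips.

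Two small corrections worth recording. First, at $\theta_0\in\partial U_{t,\mu}$ you write $h_t(\theta_0)=0$, but $h_t$ is only upper semicontinuous (a decreasing limit in $r$ of continuous functions), so from $U_{t,\mu}=\{h_t<0\}$ being open you only get $h_t(\theta_0)\geq 0$; this is all your contradiction actually uses. Second, the appeal to Lemma~\ref{lemma:3.3} in the final step is misplaced: that lemma only keeps $\operatorname{supp}\mu$ away from $e^{-i\theta}$ when $\theta\notin\overline{U_{t,\mu}}$, not at $\theta\in\partial U_{t,\mu}$. In fact this whole verification is unnecessary for the proposition: $\Psi_{t,\mu}$ is by definition the composition of $\alpha$ with the continuous extension of $\Phi_{t,\mu}$ to $\overline{\Omega_{t,\mu}}$, which Proposition~\ref{prop:2.3} already gives as $\eta_t^{-1}$, so the claim closes as soon as $\alpha$ is a homeomorphism. (Whether the \emph{explicit integral formula} for $\Phi_{t,\mu}$ agrees with that extension at boundary points matters later for the $\arg$ computation, and there one uses that $\theta\in U_{t,\mu}^c$ forces $\int|1-e^{i(\theta+x)}|^{-2}\,d\mu\leq 1/t<\infty$ directly, rather than Lemma~\ref{lemma:3.3}.)
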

\begin{proof}
Only continuity of the functions needs to prove. 
The function $h_t(r,\theta)$ is continuous on $[0,1)\times [-\pi,\pi]$
and $h_t(r,\pi)=h_t(r,-\pi)$. It follows easily from
the definition of $v_t$ that the function $v_t$ is continuous
on $[-\pi,\pi]$, which yields the first part. The function
$\Phi_{t,\mu}$ has a continuous extension to $\overline{\Omega_{t,\mu}}$ by Proposition \ref{prop:2.3},
thus the map $\Psi_{t, \mu}$ is continuous.
\end{proof}

\begin{thm}\label{thm:3.7}
The measure $\mu_t$ has a density given by 
         \begin{equation}\label{eq:3.17}
          p_t(\overline{\Psi_{t, \mu} (e^{i\theta})}) =-\frac{\ln v_t(\theta)}{\pi t}.
         \end{equation}
\end{thm}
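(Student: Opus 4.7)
The plan is to extract the density formula from the fundamental identity \eqref{eq:2.4}, which was derived in the proof of the previous corollary. Taking the real part of both sides of the logarithmic form of \eqref{eq:2.4} gives
\[
   \ln|z|-\ln|\eta_t(z)|=\frac{t}{2}\,\Re\!\left(\frac{1+\eta_{\mu_t}(z)}{1-\eta_{\mu_t}(z)}\right)
   =\frac{t}{2}\cdot\frac{1-|\eta_{\mu_t}(z)|^2}{|1-\eta_{\mu_t}(z)|^2}
\]
for $z\in\mathbb{D}\setminus\{0\}$. The right-hand side is, up to the factor $2\pi/t\cdot 1/2 = \pi/t$, precisely the Poisson-type quantity in \eqref{eq:2.5} whose boundary values recover the density of $\mu_t$ (parametrized by $e^{-i\theta}$) via Stieltjes inversion.

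Next I would evaluate this identity at a point $z$ on the unit circle that is the \emph{preimage} under $\eta_t$ of a prescribed boundary point of $\Omega_{t,\mu}$. Concretely, fix $\theta\in[-\pi,\pi]$ and set $z=\Psi_{t,\mu}(e^{i\theta})=\Phi_{t,\mu}(v_t(\theta)e^{i\theta})$. Since $\Phi_{t,\mu}$ is the inverse of $\eta_t$ on $\overline{\Omega_{t,\mu}}$ by Proposition \ref{prop:2.3}, one has $\eta_t(z)=v_t(\theta)e^{i\theta}$. Here $z\in\mathbb{T}$, so $|z|=1$, and $|\eta_t(z)|=v_t(\theta)$.

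Now take the limit $r\to 1^-$ along radii approaching $\Psi_{t,\mu}(e^{i\theta})$ and invoke the continuous extension of $\eta_{\mu_t}$ to $\overline{\mathbb{D}}$ together with the continuous extension of $\eta_t$ to $\overline{\mathbb{D}}$, both established in the preceding corollary and in Proposition \ref{prop:2.3}. Plugging into the identity above yields
\[
   \frac{1}{2\pi}\cdot\frac{1-|\eta_{\mu_t}(\Psi_{t,\mu}(e^{i\theta}))|^2}{|1-\eta_{\mu_t}(\Psi_{t,\mu}(e^{i\theta}))|^2}
   =\frac{1}{\pi t}\bigl(\ln|z|-\ln v_t(\theta)\bigr)
   =-\frac{\ln v_t(\theta)}{\pi t}.
\]
Finally, by \eqref{eq:2.5} and the Stieltjes inversion discussion following it, the left-hand side is exactly the density $p_t$ of $\mu_t$ evaluated at $\overline{\Psi_{t,\mu}(e^{i\theta})}$ (the conjugate appears because \eqref{eq:2.5} recovers the density at $e^{-i\theta}$, not at $e^{i\theta}$). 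This gives the desired formula \eqref{eq:3.17}.

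The main obstacle is not the algebra, which is essentially a one-line manipulation of \eqref{eq:2.4}, but the justification that the nontangential/radial boundary limits in \eqref{eq:2.5} actually coincide with the pointwise values coming from the continuous extensions of $\eta_{\mu_t}$ and $\eta_t$; this is where Proposition \ref{prop:2.3}(1) and the corollary established above are essential, ensuring that the Poisson-type kernel in \eqref{eq:2.5} extends continuously to the boundary so that the formula holds at every $\theta$ rather than merely almost everywhere.
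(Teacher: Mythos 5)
Your proof is correct and follows the same essential route as the paper: both express the density at $\overline{\Psi_{t,\mu}(e^{i\theta})}$ via formula \eqref{eq:2.5}, use the inversion $\eta_t\circ\Phi_{t,\mu}=\mathrm{id}$ on $\overline{\Omega_{t,\mu}}$ to identify $\eta_t(\Psi_{t,\mu}(e^{i\theta}))=v_t(\theta)e^{i\theta}$, and then evaluate using the defining relation for $v_t(\theta)$ (equivalently $|\Phi_{t,\mu}(v_t(\theta)e^{i\theta})|=1$) to obtain $-\ln v_t(\theta)/(\pi t)$. The only cosmetic difference is that you read this off by taking the logarithm of \eqref{eq:2.4} and using $|\Psi_{t,\mu}(e^{i\theta})|=1$, while the paper expands the real part as the explicit Poisson integral and invokes $h_t(v_t(\theta),\theta)=0$ directly.
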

\begin{proof}
We prove it by a direct calculation. 
     \begin{equation}\nonumber
         \begin{split}
        p_t(\overline{\Psi_{t, \mu}(e^{i\theta})})&=p_t(\overline{\Phi_{t, \mu}(v_t(\theta)e^{i\theta})}) \\  
          &\stackrel{\text{\tiny(1)}}{=}\frac{1}{2\pi}\Re\left(\frac{1+\eta_{\mu}(v_t(\theta)e^{i\theta}) }{1-\eta_{\mu}(v_t(\theta)e^{i\theta}) } \right)\\
          &\stackrel{\text{\tiny(2)}}{=}
          \frac{1}{2\pi}\Re \left(\int_{\mathbb{T}}
          \frac{1+\xi v_t(\theta)e^{i\theta}}{1-\xi v_t(\theta)e^{i\theta}}d\mu(\xi) \right)   \\
          &=\frac{1}{2\pi}\int_{-\pi}^{\pi}\frac{1-v_t(\theta)^2}{|1-v_t(\theta)e^{i(\theta+x)}|^2}d\mu(e^{ix})\\
            &\stackrel{\text{\tiny(3)}}{=}
            \frac{1}{2\pi}\frac{-2\ln v_t(\theta)}{t}=\frac{-\ln v_t(\theta)}{\pi t}.         
          \end{split}
     \end{equation}
The equality $(1)$ is due to (\ref{eq:2.5}) and
the fact $\eta_t(\Phi_{t, \mu}(z))=z$ for $z\in\overline{\Omega_{t, \mu}}$;
the equality $(2)$ follows from (\ref{eq:2.46}); and
the equality $(3)$ follows from the definition of $v_t(\theta)$.
\end{proof}
\begin{cor}\label{cor:3.8}
Let $A_t$ be the support of the measure $\mu_t$, and let $A_t^{-1}=\{e^{-i\theta}:e^{i\theta}\in A_t \} $,
then $A_t^{-1}$ equals to the image of the closure of $U_{t, \mu}$ by the homeomorphism $\Psi_{t, \mu}$. The number of 
connected components of the support of $\mu_t$ is a non-increasing function of $t$. 
\end{cor}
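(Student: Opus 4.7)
The plan is to prove the two assertions in turn. For the identification $A_t^{-1}=\Psi_{t,\mu}(\overline{U_{t,\mu}})$, I would exploit the explicit density formula of Theorem \ref{thm:3.7}. Since $\mu_t$ is absolutely continuous, $A_t$ is the closure of the set on which the density $p_t$ is strictly positive. By Theorem \ref{thm:3.7}, $p_t(\overline{\Psi_{t,\mu}(e^{i\theta})})=-\ln v_t(\theta)/(\pi t)$, which vanishes precisely when $v_t(\theta)=1$; from the definition of $v_t$ and Theorem \ref{thm:3.2}, this happens exactly when $\theta\in U_{t,\mu}^c$. Thus $p_t$ is positive precisely on the image $\{\overline{\Psi_{t,\mu}(e^{i\theta})}:\theta\in U_{t,\mu}\}$, so after complex conjugation on $\mathbb{T}$, $A_t^{-1}$ is the closure of $\Psi_{t,\mu}(U_{t,\mu})$. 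Since $\Psi_{t,\mu}$ is a homeomorphism of the compact space $\mathbb{T}$, this closure coincides with $\Psi_{t,\mu}(\overline{U_{t,\mu}})$, which is the desired identity.

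For the monotonicity of the component count, I would first reduce to the corresponding statement for $\overline{U_{t,\mu}}$. The map $w\mapsto\overline{w}$ and the map $\Psi_{t,\mu}$ are both homeomorphisms of $\mathbb{T}$, so $A_t$, $A_t^{-1}$, and $\overline{U_{t,\mu}}$ all have the same number of connected components. Trivially $U_{t_1,\mu}\subset U_{t_2,\mu}$ when $t_1\leq t_2$, because $1/t_1\geq 1/t_2$. I would then show that every connected component of $\overline{U_{t_2,\mu}}$ meets $\overline{U_{t_1,\mu}}$, and hence contains at least one connected component of $\overline{U_{t_1,\mu}}$. The resulting surjective assignment from components of $\overline{U_{t_1,\mu}}$ onto components of $\overline{U_{t_2,\mu}}$ forces the component count to be non-increasing in $t$.

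The core of the argument is a convexity contradiction. Suppose some component $V$ of $\overline{U_{t_2,\mu}}$ were disjoint from $\overline{U_{t_1,\mu}}$. Its open arc interior $V^\circ$ then lies in $U_{t_1,\mu}^c$, so Proposition \ref{prop:3.4} shows that $f(\theta)=\int_{-\pi}^{\pi}(2-2\cos(\theta+x))^{-1}d\mu(e^{ix})$ is strictly convex and analytic on $V^\circ$. On the other hand, $f$ extends continuously to the endpoints of $V$ with value exactly $1/t_2$ (these endpoints lie on $\partial U_{t_2,\mu}\cap\mathbb{T}$), while $f>1/t_2$ throughout $V^\circ$, in direct contradiction with convexity. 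I expect this convexity step to be the main obstacle: once one has verified that $f$ extends continuously to the closed arc $V$ with the correct boundary values and applied Proposition \ref{prop:3.4}, the rest of the argument is essentially bookkeeping about the assignment of components.
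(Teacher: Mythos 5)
Your proof is correct and essentially matches the paper's. Part (1) is identical; for part (2), the paper only records that the monotonicity of the component count of $U_{t,\mu}$ follows from Proposition \ref{prop:3.4}, and your convexity-contradiction argument is a valid way to supply the details the paper omits. Two small points to tighten: (i) what you actually use is that $f>1/t_2$ at \emph{some} point of $V^\circ$ (not necessarily throughout, since $V^\circ$ might a priori contain points of $\partial U_{t_2,\mu}$), and this already contradicts strict convexity on an arc containing $V$ together with $f=1/t_2$ at the two endpoints of $V$; (ii) you should justify that $f$ is continuous at those endpoints and that Proposition \ref{prop:3.4} applies on a neighborhood of $V$ by observing that $V$ lies in the open set $\mathbb{T}\setminus\overline{U_{t_1,\mu}}$, where $\mu$ carries no mass by Lemma \ref{lemma:3.3}, so $f$ is in fact analytic there.
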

\begin{proof}
The map $\Psi_{t,\mu}$ is a homeomorphism from $\mathbb{T}$ onto $\mathbb{T}$.
By Theorem \ref{thm:3.7}, we have that $p_t(\overline{\Psi_{t, \mu} (e^{i\theta})})>0$
if and only if $v_t(\theta)\neq 1$.
By the definitions of $U_{t,\mu}$ and $v_t(\theta)$, we have 
        \begin{equation}\nonumber
           \begin{split}
              \{\theta:v_t(\theta)\neq 1 \}
              &=\{\theta: \exists \,\, 0\leq r<1 \,\, \text{so that}\,\, h_t(r,\theta)=0\}\\
                            &=\{\theta:h_t(\theta)<0 \}\\
                            &=U_{t,\mu}.
           \end{split}
        \end{equation}
Therefore, the support of $\mu_t$ is the conjugate of the set $\Psi_{t,\mu}(U_{t,\mu})$,
which implies the first assertion.        

From Theorem \ref{thm:3.7}, to prove the second assertion, it is enough to prove that the number of 
connected components of $U_{t, \mu}$ is a non-increasing function of $t$, which is 
a consequence of Proposition \ref{prop:3.4}.
\end{proof}
\begin{prop}
The number $v_t(\theta)$ is bounded below by $a_t$, where $a_t$ is the unique solution of the equation
    \begin{equation}\nonumber
      r\exp\left(\frac{t}{2}\frac{1+r}{1-r} \right)=1;
    \end{equation}
and thus the density of $\mu_t$ is bounded above by $-\ln a_t/\pi t$. Moreover,
the density of $\mu_t$ tends to $1/2\pi$ uniformly as $t\rightarrow \infty$; in particular,
the support of $\mu_t$ is $\mathbb{T}$ when $t$ is sufficiently large.
\end{prop}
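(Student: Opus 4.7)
My plan is to sandwich $\ln|\Phi_{t,\mu}(re^{i\theta})|$ between two explicit $\theta$-independent functions of $r$ using the sharp pointwise Poisson bounds
\[
\tfrac{1-r}{1+r} \;\leq\; \tfrac{1-r^2}{|1-\xi re^{i\theta}|^2} \;\leq\; \tfrac{1+r}{1-r}, \qquad \xi \in \mathbb{T},\ 0<r<1.
\]
Substituted into (\ref{eq:3.4}), these yield
\[
\ln r + \tfrac{t}{2}\cdot\tfrac{1-r}{1+r} \;\leq\; \ln|\Phi_{t,\mu}(re^{i\theta})| \;\leq\; \ln r + \tfrac{t}{2}\cdot\tfrac{1+r}{1-r}.
\]

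For the lower bound on $v_t(\theta)$, I will set $\phi_t(r)=r\exp(\tfrac{t}{2}\tfrac{1+r}{1-r})$ and observe that its logarithmic derivative $\tfrac{1}{r}+\tfrac{t}{(1-r)^2}$ is strictly positive on $(0,1)$, so $\phi_t$ is continuous and strictly increasing from $0$ to $+\infty$, yielding a unique solution $a_t\in(0,1)$ of $\phi_t(r)=1$. For $r<a_t$ the displayed upper bound is strictly negative, so $re^{i\theta}\in\Omega_{t,\mu}$; Theorem \ref{thm:3.2} then gives $v_t(\theta)\geq a_t$, and Theorem \ref{thm:3.7} yields the claimed density bound. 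Moreover the defining relation $-\ln a_t=\frac{t}{2}\frac{1+a_t}{1-a_t}\geq t/2$ forces $a_t\leq e^{-t/2}\to 0$, whence $\frac{1+a_t}{1-a_t}\to 1$ and $-\ln a_t/(\pi t)\to 1/(2\pi)$.

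To obtain the matching lower asymptotic on the density I will first note that, for $t>4$, the integrand in the definition of $U_{t,\mu}$ is at least $1/4>1/t$ (since $\tfrac{1}{|1-e^{i\beta}|^2}=\tfrac{1}{2-2\cos\beta}\geq 1/4$), so $U_{t,\mu}=[-\pi,\pi]$, $v_t(\theta)<1$ for every $\theta$, and Corollary \ref{cor:3.8} gives $\mathrm{supp}(\mu_t)=\mathbb{T}$. At such $\theta$ the relation $h_t(v_t(\theta),\theta)=0$ combined with the lower Poisson bound reads
\[
-\ln v_t(\theta)\;\geq\;\frac{t}{2}\cdot\frac{1-v_t(\theta)}{1+v_t(\theta)}.
\]
A short contradiction argument will rule out any subsequence $v_{t_n}(\theta_n)\to \alpha\in(0,1)$ (the right side blows up while the left stays bounded) and $\alpha=1$ (leading-order expansion forces $1\geq t_n/4$). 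Hence $v_t(\theta)\to 0$ uniformly, and then $\tfrac{1-v_t(\theta)}{1+v_t(\theta)}\to 1$ uniformly, giving $-\ln v_t(\theta)/(\pi t)\to 1/(2\pi)$ uniformly.

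The main obstacle will be the uniform version of the lower asymptotic on $-\ln v_t(\theta)$: unlike the upper direction, which follows directly from monotonicity of $\phi_t$, the lower direction relies on a subsequence-based contradiction that must be carried out uniformly in $\theta$. This is feasible only because the displayed dual-Poisson inequality depends on $\theta$ solely through $v_t(\theta)$ itself, so any failure of uniformity produces a violating pair $(t_n,\theta_n)$ immediately.
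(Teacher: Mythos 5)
Your proof is correct and slightly more self-contained than the paper's. For the lower bound $v_t(\theta)\geq a_t$ the approach is essentially identical: both plug the Poisson upper bound $\frac{1-r^2}{|1-\xi re^{i\theta}|^2}\leq \frac{1+r}{1-r}$ into (\ref{eq:3.4}). The paper then appeals to Lemma \ref{tech} with $y=1$ to get monotonicity of $\frac{1+r}{(-2\ln r)(1-r)}$ in $r$; you instead verify the monotonicity of $\phi_t(r)=r\exp(\frac{t}{2}\frac{1+r}{1-r})$ by the elementary computation $(\ln\phi_t)'(r)=\frac{1}{r}+\frac{t}{(1-r)^2}>0$. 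These two monotonicity facts are not the same statement, but they reach the same conclusion, and yours is the cheaper of the two. The genuine departure is in the last claim: the paper cites an external result (Corollary 3.27 of \cite{Zhong1}) for the uniform convergence of the density to $1/2\pi$, whereas you derive it from scratch. Your squeeze---upper bound $-\ln v_t(\theta)\leq -\ln a_t=\frac{t}{2}\frac{1+a_t}{1-a_t}$ with $a_t\leq e^{-t/2}\to 0$, and lower bound $-\ln v_t(\theta)\geq\frac{t}{2}\frac{1-v_t(\theta)}{1+v_t(\theta)}$ from the lower Poisson estimate, together with the subsequence contradiction that forces $v_t(\theta)\to 0$ uniformly for $t>4$---is correct and has the advantage of keeping the proof internal to the paper. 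In the $\alpha=1$ case of your contradiction, the inequality normalizes to $\frac{-\ln v}{1-v}\geq\frac{t}{2}\frac{1}{1+v}$, whose left side tends to $1$ and whose right side tends to $t/4$, matching your ``$1\geq t_n/4$''; the argument is sound.
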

\begin{proof}
We notice that 
     \begin{equation}\label{eq:3.21}
       \frac{1-r^2}{-2\ln r}\int_{-\pi}^{\pi}\frac{1}{|1-re^{i(\theta+x)}|^2}d\mu(e^{ix})\leq \frac{1+r}{(-2\ln r)(1-r)}.
     \end{equation}
By taking $y=1$ for Lemma \ref{tech}, we see that the right hand side of (\ref{eq:3.21}) is also an
increasing function of $r$, which yields that $v_t(\theta)\geq a_t$. 
The last assertion is a consequence of \cite[Corollary 3.27]{Zhong1}.
\end{proof}
\begin{rmk}
\emph{From the description of the density of $\lambda_t$ in Example \ref{example:3.10} below, or from
\cite[Theorem 5.4]{Zhong1}, we see that $-\ln a_t/\pi t$ is the maximum
of the density of $\lambda_t$.
}
\end{rmk}

As an example, we now apply Theorem \ref{thm:3.7} to give a description of the density of $\lambda_t$
which recovers Biane's classic results regarding the density of $\lambda_t$ proved in \cite{BianeJFA}. 
See also \cite{DH2011, Zhong1} for different approaches. We provide
a picture of the density function of $\lambda_t$ for some values of $t$ in the end of this paper.
\begin{example}\label{example:3.10}
\emph{
Let $\mu=\delta_1$, the Dirac measure at $1$, we denote $\Omega_t=\Omega_{t,\delta_1}$, $\Phi_t=\Phi_{t,\delta_1}$,
$\Psi_t=\Psi_{t,\delta_1}$ and $U_t=U_{t,\delta_1}$.
Then we have that $\mu_t=\lambda_t$ and 
   \begin{equation}\nonumber
       \begin{split}
         U_t=\left\{-\pi\leq\theta\leq \pi: \frac{1}{|1-e^{i\theta}|^2}>\frac{1}{t} \right\}=
         \left\{-\pi\leq \theta\leq \pi: 1-\cos\theta<\frac{t}{2}\right \}
       \end{split}
   \end{equation}
which yields that $U_t=\mathbb{T}$ if $t>4$, and 
     \begin{equation}\nonumber
       U_t=(-\arccos(1-t/2), \arccos(1-t/2))
     \end{equation}
if $t\leq 4$. If $\theta\in U_t$, then $v_t(\theta)$ satisfies the equation
       \begin{equation}\label{eq:3.24}
         \frac{1-v_t(\theta)^2}{-2\ln v_t(\theta)}\frac{1}{1-2v_t(\theta)\cos\theta+v_t(\theta)^2}=\frac{1}{t},
       \end{equation}
and we have 
     \begin{equation}\label{eq:3.251}
       \arg(\Psi_t(e^{i\theta}))=\theta+t\frac{v_t(\theta)\sin\theta}{1-2v_t(\theta)\cos\theta+v_t(\theta)^2}. 
     \end{equation}
For $t\leq 4$, let $\theta_0(t)=\arccos(1-t/2)$, then $v_t(\theta_0(t))=1$, from which we deduce that
       \begin{equation}\label{eq:3.26}
         \arg(\Psi_t(e^{i\theta_0(t)}))=\theta_0(t)+\sin\theta_0(t).
       \end{equation}
From Corollary \ref{cor:3.8} and (\ref{eq:3.26}), we see that the support of $\lambda_t$ is the set 
       \begin{equation}\nonumber
         \left\{e^{i\theta}: -\arccos\left(1-\frac{t}{2}\right)-\frac{1}{2}\sqrt{(4-t)t}\leq\theta
              \leq \arccos\left(1-\frac{t}{2}\right)+\frac{1}{2}\sqrt{(4-t)t} \right\}
       \end{equation} 
if $t<4$; and the support of $\lambda_t$ is $\mathbb{T}$ if $t\geq 4$.}
       
\emph{ By Theorem \ref{thm:3.7} and (\ref{eq:3.24}), we obtain that the density of $\lambda_t$ at 
       $\Phi_t(v_t(\theta)e^{i\theta})$ is given by 
            \begin{equation}\label{eq:3.28}
              \frac{1}{2\pi}\frac{1-v_t(\theta)^2}{1-2v_t(\theta)\cos\theta+v_t(\theta)}
                 =\frac{1}{2\pi}\Re\left(\frac{1+v_t(\theta)e^{i\theta}}{1-v_t(\theta)e^{i\theta}} \right),
            \end{equation}
where we used the fact that $v_t(-\theta)=v_t(\theta)$ due to the identity
$\Phi_t(\overline{z})=\overline{\Phi_t(z)}$.
Note that $\Phi_t(z)=z\exp(\frac{t}{2}\frac{1+z}{1-z})$,
then 
(\ref{eq:3.28}) implies that, for $\theta\in U_t$, at the point $\omega=\Phi_t(v_t(\theta)e^{i\theta})$, the density is positive 
   and is equal to the product of $1/2\pi$ and the real part of $k(t,\omega)$, 
   where $k(t,\omega)$ is the only solution, with positive real part,
   of the equation 
       \begin{equation}\nonumber
          \frac{z-1}{z+1}e^{\frac{t}{2}z}=\omega.
       \end{equation}
}
\end{example}

Theorem \ref{thm:3.2} and \cite[Lemma 5.1]{Zhong1} allow us to obtain a better 
description for $\Omega_t$, which implies the unimodality of $\lambda_t$ as it was 
shown in \cite[Theorem 5.4]{Zhong1}.
\begin{prop}
For any $t>0$, there exists a non-decreasing function $\gamma_t:[0,\pi]\rightarrow (0,1]$ such that
     \begin{equation}\nonumber
       \partial\Omega_t=\{\gamma_t(\theta)e^{i\theta}: 0\leq \theta\leq \pi\}\cup  
                      \{\gamma_t(\theta)e^{-i\theta}: 0\leq \theta\leq \pi\}. 
     \end{equation}
\end{prop}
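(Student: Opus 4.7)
The plan is to define $\gamma_t(\theta) := v_t(\theta)$ for $\theta \in [0,\pi]$, with $v_t$ as in \eqref{eq:3.0001} specialized to $\mu = \delta_1$, and to show that this $\gamma_t$ is non-decreasing with image in $(0,1]$.

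First I would exploit the symmetry $\Phi_t(\bar z) = \overline{\Phi_t(z)}$ already used in Example \ref{example:3.10}. This forces $v_t(-\theta) = v_t(\theta)$, so the description $\partial\Omega_t \cap \mathbb{D} = \{v_t(\theta)e^{i\theta} : \theta \in U_t\}$ from Theorem \ref{thm:3.2}, together with the fact that $v_t \equiv 1$ on $U_t^c$, splits $\partial\Omega_t$ into the two conjugate arcs given in the statement once $\theta$ is restricted to $[0,\pi]$. The positivity $v_t(\theta) > 0$ is automatic from the definition, since $h_t(r,\theta) \to 1$ as $r \to 0^+$.

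The core step is monotonicity. Specializing the definition of $h_t$ to $\mu = \delta_1$ gives
\[
h_t(r,\theta) = 1 - \frac{t}{2}\,\frac{1-r^2}{-\ln r}\,\frac{1}{1-2r\cos\theta+r^2}.
\]
On the open set $U_t \cap (0,\pi)$, where $0 < v_t(\theta) < 1$, the relation $h_t(v_t(\theta),\theta) = 0$ determines $v_t$ implicitly. Lemma \ref{tech} (applied with $y = \cos\theta$) provides $\partial h_t / \partial r < 0$ throughout $(0,1)\times[-\pi,\pi]$, and a direct differentiation yields
\[
\frac{\partial h_t}{\partial \theta}(r,\theta) = \frac{t\,(1-r^2)\, r\sin\theta}{(-\ln r)(1-2r\cos\theta+r^2)^2},
\]
which is strictly positive on $(0,1)\times(0,\pi)$. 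The implicit function theorem then gives $v_t'(\theta) > 0$ on the interior of $U_t\cap[0,\pi]$.

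To finish, the explicit description of $U_t$ from Example \ref{example:3.10} shows that $U_t^c \cap [0,\pi]$ is either empty (when $t \geq 4$) or the interval $[\theta_0(t),\pi]$ with $\theta_0(t) = \arccos(1-t/2)$ (when $t < 4$), and on this set $v_t \equiv 1$. Combining the strictly increasing piece on $[0,\theta_0(t)]$ with the constant value $1$ on $[\theta_0(t),\pi]$, and using the continuity of $v_t$ on $[-\pi,\pi]$ established in the earlier proposition that $e^{i\theta} \mapsto v_t(\theta)e^{i\theta}$ is a homeomorphism, I obtain a continuous non-decreasing $\gamma_t$ with the required arc decomposition. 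The only delicate point is pinning down the sign of $\partial h_t/\partial\theta$; once this is verified by explicit calculation and combined with Lemma \ref{tech}, the rest of the argument is a routine application of the implicit function theorem.
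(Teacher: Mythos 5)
Your proof is correct and rests on essentially the same observation as the paper: you show $\partial h_t/\partial\theta > 0$ on $(0,1)\times(0,\pi)$, while the paper shows $\theta \mapsto |\Phi_t(re^{i\theta})|$ is strictly decreasing on $[0,\pi]$ for fixed $r$; these are equivalent since $\ln|\Phi_t(re^{i\theta})| = (\ln r)\,h_t(r,\theta)$ with $\ln r < 0$. The paper then converts this monotonicity directly into a containment statement --- a boundary point $r_0e^{i\theta_0}\in\partial\Omega_t\cap\mathbb{D}$ forces the whole radial arc $\{r_0e^{i\theta}:\theta_0<\theta\le\pi\}$ into $\Omega_t$ --- whereas you pass through the implicit function theorem; the mechanism differs slightly but the mathematical content is the same.
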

\begin{proof}
For given $0<r<1$, we notice that the map $\theta \rightarrow |\Phi_{t,\delta_1}(re^{i\theta})|=
r\exp\left(\frac{t}{2}\frac{1-r^2}{1-2r\cos\theta+r^2}\right)$ is 
a strictly decreasing function of $\theta$ on the interval $[0,\pi]$, which implies that
if $z=r_0e^{i\theta_0}\in\partial\Omega_t\cap\mathbb{D}\cap\mathbb{C}^+$
for $\theta_0\in [0,\pi)$,
then we have that 
    \begin{equation}\label{eq:3.05}
       \{re^{i\theta}:r=r_0, \theta_0<\theta \leq\pi \}\subset \Omega_t.
    \end{equation}
Note that $\Omega_t$ is symmetric with respect to $x$-axis, then (\ref{eq:3.05}) and
Theorem \ref{thm:3.2} yield our assertion. 
\end{proof}
\begin{cor}
The density of $\lambda_t$ is symmetric with respect to $x$-axis; and the density
of $\lambda_t$ has a unique local maximum at $1$.
\end{cor}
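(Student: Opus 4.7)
The plan is to read off both assertions from Theorem \ref{thm:3.7}, which expresses the density as $p_t(\overline{\Psi_t(e^{i\theta})}) = -\ln v_t(\theta)/(\pi t)$, combined with the explicit form $\Phi_t(z) = z\exp(\frac{t}{2}\frac{1+z}{1-z})$ in Example \ref{example:3.10} and the monotonicity information in Lemma \ref{tech}.

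For the symmetry, the identity $\Phi_t(\overline z) = \overline{\Phi_t(z)}$ is immediate from the explicit formula. Since $|\Phi_t(re^{i\theta})|$ then depends on $\theta$ only through $\cos\theta$, this forces $v_t(-\theta) = v_t(\theta)$ and in turn $\Psi_t(e^{-i\theta}) = \Phi_t(v_t(\theta)e^{-i\theta}) = \overline{\Phi_t(v_t(\theta)e^{i\theta})} = \overline{\Psi_t(e^{i\theta})}$. Since $\Psi_t$ is a homeomorphism of $\mathbb{T}$ onto itself, any $w \in \mathbb{T}$ may be written uniquely as $w = \overline{\Psi_t(e^{i\theta})}$; then $\overline w = \overline{\Psi_t(e^{-i\theta})}$, and Theorem \ref{thm:3.7} together with the evenness of $v_t$ gives $p_t(\overline w) = -\ln v_t(-\theta)/(\pi t) = -\ln v_t(\theta)/(\pi t) = p_t(w)$.

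For the unimodality, I first check that $\Psi_t(1) = 1$: at $\theta = 0$, equation \eqref{eq:3.24} reduces to $\frac{t}{2}\frac{1+v_t(0)}{1-v_t(0)} = -\ln v_t(0)$, so $\Phi_t(v_t(0)) = v_t(0)\exp(-\ln v_t(0)) = 1$. Next, I would strengthen the non-decreasing property of $v_t$ supplied by the previous proposition to strict monotonicity on $U_t \cap [0,\pi]$ by revisiting Lemma \ref{tech}: by that lemma $T_y(r)$ is strictly increasing in $r$, and a direct inspection shows that it is also strictly increasing in $y$; since $v_t(\theta)$ is implicitly defined by $T_{\cos\theta}(v_t(\theta)) = 2/t$ and $\cos\theta$ is strictly decreasing on $[0,\pi]$, $v_t$ must be strictly increasing on $U_t \cap [0,\pi]$. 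Consequently $-\ln v_t(\theta)/(\pi t)$ is strictly positive and strictly decreasing on $U_t \cap [0,\pi]$ and vanishes outside $U_t$; by the symmetry just established, the same holds on $[-\pi,0]$ after reflection. Transporting this through the homeomorphism $\Psi_t$, which fixes $1$ and intertwines complex conjugation, gives that the density on $\mathbb{T}$ attains its unique local maximum at $\Psi_t(1) = 1$.

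The main obstacle is bridging the gap between the merely non-decreasing parametrization provided by the previous proposition and the strict monotonicity actually needed for uniqueness of the local maximum; this is where the full strength of Lemma \ref{tech}, combined with the strict monotonicity of $\cos\theta$ on $[0,\pi]$, becomes essential.
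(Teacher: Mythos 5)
Your proof is correct and takes essentially the same route as the paper: the symmetry comes from $\Phi_t(\overline z)=\overline{\Phi_t(z)}$, and the unimodality from the strict radial monotonicity of $\partial\Omega_t$, which the paper obtains by observing that $\theta\mapsto|\Phi_t(re^{i\theta})|$ is strictly decreasing on $[0,\pi]$ for each fixed $r$ — exactly equivalent to your observation that $T_y(r)$ is strictly increasing in $y$. You are right to flag that strict (not merely non-decreasing) monotonicity of $v_t$ on $U_t\cap[0,\pi]$ is what is really needed for uniqueness of the local maximum; the paper's preceding proposition states only ``non-decreasing'' for $\gamma_t$, but its proof (the inclusion $\{r_0e^{i\theta}:\theta_0<\theta\le\pi\}\subset\Omega_t$ for $r_0e^{i\theta_0}\in\partial\Omega_t\cap\mathbb{D}$, combined with Theorem~\ref{thm:3.2}) does yield the strict inequality, just as your implicit-equation argument does.
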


\begin{rmk}\label{rmk:order1}
\emph{
For $0<t<4$, 
let $\ln v_t(\theta)=-x$, we have
\[
 \frac{1-v_t(\theta)^2}{-2\ln v_t(\theta)}\approx 1-x+\frac{2x^2}{3}+o(x^2)
\]
for $\theta$ close to $\theta_0(t)$. We also have
\[
 \frac{1-2v_t(\theta)\cos\theta+v_t(\theta)^2}{t}\approx 1
    -x+\frac{2\sin\theta_0(t)(\theta-\theta_0(t))}{t}+o(|\theta-\theta_0(t)|)
\]
for $\theta$ close to $\theta_0(t)$,
where we used the fact $1-\cos\theta_0(t)=t/2$. 
From (\ref{eq:3.24}), we thus deduce that there exists $a(t)>0$ such that
     \begin{equation}\label{eq:3.40}
        -\ln v_t(\theta)=a(t)|\theta_0(t)-\theta|^{\frac{1}{2}}(1+o(1))
     \end{equation}
in a small interval $(\theta_0(t)-\epsilon,\theta_0]$ for some $\epsilon>0$. 
By taking derivative for (\ref{eq:3.251}) and noticing
(\ref{eq:3.40}), we derive that
      \begin{equation}\nonumber
         \frac{d}{d\theta}\arg(\Psi_t(e^{i\theta}))\bigg|_{\theta=\theta_0}>0. 
      \end{equation}
Set $\theta(t)=\arccos\left(1-t/2\right)+\sqrt{(4-t)t}/2$ be one end point of the support of $\lambda_t$,
then by using
Theorem \ref{thm:3.7} we deduce that there exists $b(t)>0$ such that
     \begin{equation}\label{eq:3.41}
      \frac{d\lambda_t}{d\theta}\left(e^{i\theta}\right)=b(t)|\theta-\theta(t)|^{\frac{1}{2}}(1+o(1))
      \end{equation}
in a small interval $(\theta(t)-\epsilon',\theta(t)]$ for some $\epsilon'>0$.
}

\emph{Let us focus on the case that $t=4$. Let $v_4(\theta)=1-x(\theta)$, from 
(\ref{eq:3.24}), by applying a similar calculation for the case $0<t<4$, we can easily obtain that $x(\theta)=O(|\pi-\theta|)$ for
$\theta$ in a small interval $(\pi-\epsilon',\pi]$. 
From (\ref{eq:3.251}), we can calculate that
       \begin{equation}\nonumber
         \begin{split}
           \arg(\Psi_4(e^{i\theta}))&=O(x(\theta)^2)|\pi-\theta|\\
                  &=O(|\pi-\theta|^3)
         \end{split}
       \end{equation}
for $\theta$ in a small interval $(\pi-\epsilon',\pi]$.}
\emph{By applying Theorem \ref{thm:3.7}, we see that there exists $b(4)>0$ such that
       \begin{equation}\label{eq:3.42}
          \frac{d\lambda_4}{d\theta}\left(e^{i\theta} \right)=b(4)\left|\pi-\theta\right|^{\frac{1}{3}}(1+o(1))
       \end{equation}
in a small interval $(\pi-\epsilon',\pi]$.
}

\emph{We note that the function $\Phi_t$ has zeros of order one at $e^{i\theta_0(t)}$ and $e^{-i\theta_0(t)}$ for $0<t<4$ and
$\Phi_4$ has a zero of order two at $-1$.
The orders in (\ref{eq:3.41}) and (\ref{eq:3.42}) are essentially due to this fact.
}
\end{rmk}

\section{The positive half line case}
In this section, we prove some useful properties of $\Gamma_{t,\nu}$, and give a description
of the density of $\nu_t=\nu\boxtimes\sigma_t$.

We use polar coordinates and the parametrization $\mathbb{C}^+=\{re^{i\theta}:0<r<+\infty, 0<\theta<\pi\}$.
For $z=re^{i\theta}\in\mathbb{C}^+$, we calculate 
    \begin{equation}\nonumber
         |H_{t,\nu}(z)|=r\exp\left(\frac{t}{2}\int_0^{+\infty}\frac{r^2\xi^2-1}{1+r^2\xi^2-2r\xi\cos\theta}d\nu(\xi) \right)
    \end{equation}
and 
    \begin{equation}\label{eq:4.1}
         \begin{split}
       \arg(H_{t,\nu}(z))&=\theta-t\int_0^{+\infty}\frac{r\xi\sin\theta}{1+r^2\xi^2-2r\xi\cos\theta}d\nu(\xi)\\
                   &=\theta\left[1-\frac{t (\sin\theta)}{\theta}
                   \int_0^{+\infty}\frac{r\xi}{1+r^2\xi^2-2r\xi\cos\theta}d\nu(\xi) \right].
         \end{split}
    \end{equation}
For $0<r<+\infty$ and $0<\theta<\pi$, we set 
     \begin{equation}\nonumber
       f_t(r,\theta)=1-\frac{t (\sin\theta)}{\theta}
                   \int_0^{+\infty}\frac{r\xi}{1+r^2\xi^2-2r\xi\cos\theta}d\nu(\xi).
     \end{equation}
\begin{lemma}
For fixed $0<r<+\infty$, the function $f_t(r,\theta)$ is an increasing function of $\theta$ on $(0,\pi)$.
\end{lemma}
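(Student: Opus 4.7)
The plan is to reduce the monotonicity of $f_t(r,\theta)$ to a pointwise-in-$\xi$ monotonicity of the integrand, and then factor that integrand into two manifestly decreasing positive pieces. Rewriting the defining formula as
\[ f_t(r,\theta) = 1 - t\int_0^{+\infty}\frac{\sin\theta}{\theta}\cdot\frac{r\xi}{1+r^2\xi^2-2r\xi\cos\theta}\,d\nu(\xi), \]
it suffices to show that for every fixed $s=r\xi>0$ the map
\[ \phi_s(\theta) = \frac{\sin\theta}{\theta}\cdot\frac{s}{1+s^2-2s\cos\theta} \]
is non-negative and non-increasing on $(0,\pi)$, because then integration against $d\nu$ preserves monotonicity and $f_t(r,\cdot)=1-t\cdot(\text{non-increasing})$ is non-decreasing.

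Next I would handle the two factors separately. The function $\sin\theta/\theta$ is positive on $(0,\pi)$; its derivative $(\theta\cos\theta-\sin\theta)/\theta^2$ is negative because $\sin\theta-\theta\cos\theta$ vanishes at $0$ and has positive derivative $\theta\sin\theta$ on $(0,\pi)$. For the second factor, $\partial_\theta(1+s^2-2s\cos\theta)=2s\sin\theta>0$ on $(0,\pi)$, so the denominator is strictly increasing and the positive quantity $s/(1+s^2-2s\cos\theta)$ is strictly decreasing. A product of two non-negative decreasing functions is decreasing, since both terms of the product rule are non-positive. This yields the desired monotonicity of $\phi_s$.

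The remaining detail is to confirm the integrand is dominated by an integrable function so pointwise monotonicity transfers to the integral. Maximising $u/(1+u^2-2u\cos\theta)$ in $u>0$ gives $1/(2-2\cos\theta)=1/(4\sin^2(\theta/2))$, attained at $u=1$, so $\phi_s(\theta)\leq \sin\theta/(4\theta\sin^2(\theta/2))$, a bound independent of $\xi$ and finite for each $\theta\in(0,\pi)$. Dominated convergence (or simply monotone convergence applied to the difference quotients) then transfers the monotonicity to the integral, completing the proof. I expect essentially no obstacle here; the only tempting misstep would be to attempt differentiation under the integral sign and juggle an explicit derivative, but the two-factor factorisation sidesteps that computation entirely.
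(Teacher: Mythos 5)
Your argument is correct and follows essentially the same route as the paper: both factor the quantity whose sign controls $f_t$ into the positive decreasing piece $\sin\theta/\theta$ and the positive decreasing kernel $r\xi/(1+r^2\xi^2-2r\xi\cos\theta)$, and conclude that the product (hence the integral) is decreasing, so $f_t$ is increasing. The only cosmetic difference is that you move $\sin\theta/\theta$ inside the integral so the monotonicity is pointwise in $\xi$, whereas the paper keeps it outside and differentiates the integral directly; your verification that $\theta\cos\theta-\sin\theta<0$ via the antiderivative is also a bit cleaner than the paper's $\frac{\cos\theta}{\theta^2}(\theta-\tan\theta)$ formula.
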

\begin{proof}
We calculate 
        \begin{equation}\nonumber
           \left(\frac{\sin\theta}{\theta} \right)'=\frac{\cos\theta}{\theta^2}(\theta-\tan\theta)<0,
        \end{equation}
and 
       \begin{equation}\nonumber
         \frac{d}{d\theta}\left(\int_0^{+\infty}\frac{r\xi}{1+r^2\xi^2-2r\xi\cos\theta}d\nu(\xi)\right)
            =\int_0^{+\infty}\frac{-2r^2\xi^2\sin\theta}{(1+r^2\xi^2-2r\xi\cos\theta)^2}d\nu(\xi)<0
       \end{equation}
for all $\theta\in(0,\pi)$ and $0<r<\infty$. The assertion follows from these calculations.
\end{proof}

We set $f_t(r)=\lim_{\theta\rightarrow 0^+}f_t(r,\theta)$, then we have that 
       \begin{equation}\nonumber
         f_t(r)=1-t\int_0^{+\infty}\frac{r\xi}{(1-r\xi)^2}d\nu(\xi).
       \end{equation}
Let   \begin{equation}\nonumber
          \begin{split}
        V_{t,\nu}&=\{0<r<+\infty: f_t(r)<0 \}\\
             &=\left\{0<r<+\infty: \int_0^{+\infty}\frac{r\xi}{(1-r\xi)^2}d\nu(\xi)>\frac{1}{t} \right \}.
          \end{split}   
      \end{equation}
We also define a function $u_t:(0,+\infty)\rightarrow [0,\pi)$ as
      \begin{equation}\label{eq:4.0001}
          \begin{split}
         u_t(r)&=\inf\{0\leq \theta<\pi: f_t(r,\theta)\geq 0\}\\
              &=\inf\left\{0\leq \theta<\pi: \frac{\sin\theta}{\theta}
                   \int_0^{+\infty}\frac{r\xi}{1+r^2\xi^2-2r\xi\cos\theta}d\nu(\xi) \leq \frac{1}{t}\right\}.
           \end{split} 
      \end{equation}
\begin{rmk}\label{rmk:3}
\emph{
If $r\in V_{t,\nu}$, then $u_t(r)>0$ and we have
     \begin{equation}\nonumber
        \frac{\sin(u_t(r))}{u_t(r)}
                   \int_0^{+\infty}\frac{r\xi}{1+r^2\xi^2-2r\xi\cos(u_t(r))}d\nu(\xi)= \frac{1}{t}.
     \end{equation} 
The function $u_t$ depends on $\nu$, and we always fix an
arbitrary $\nu\in\Mrealpplus$ in this article when we discuss $u_t$. 
Only in Proposition \ref{prop:4.12} and Lemma \ref{lemma:4.13}, we choose $\nu$ as $\nu=\delta_1$.}
\end{rmk}
\begin{thm}\label{thm:4.2}
The sets $\Gamma_{t,\nu},\partial\Gamma_{t,\nu}$ can be characterized by the functions $f_t(r,\theta)$
and $u_t(\theta)$ as follows.
  \begin{enumerate}[$(1)$]
    \item $\Gamma_{t,\nu}=\{re^{i\theta}:0<r<+\infty, u_t(r)<\theta<\pi \}$.
    \item $\partial\Gamma_{t,\nu}\cap\mathbb{C}^+=\{re^{i\theta}: r\in V_{t,\nu},\text{and}\, f_t(r,\theta)=0\}$.
    \item  We have that \begin{equation}\nonumber
        \begin{split}
          \partial\Gamma_{t,\nu}\cap (0,+\infty)&=\{r:0<r<+\infty, u_t(r)=0 \}=(0,+\infty)\backslash V_{t,\nu}\\
                    &=\left\{0<r<+\infty: \int_0^{+\infty}\frac{r\xi}{(1-r\xi)^2}d\nu(\xi)\leq \frac{1}{t}\right\}.
        \end{split}
      \end{equation}
  \end{enumerate}
\end{thm}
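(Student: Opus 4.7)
The plan is to use (\ref{eq:4.1}) to identify $\Im \log H_{t,\nu}(re^{i\theta}) = \theta f_t(r,\theta)$ on $\mathbb{C}^+$, and then to exploit the strict monotonicity of $f_t(r,\cdot)$ proved in the preceding lemma to read off where this quantity lies in $(0,\pi)$. Because $f_t(r,\theta) \leq 1$ and $\lim_{\theta\to\pi^-} f_t(r,\theta) = 1$, the product $\theta f_t(r,\theta)$ stays strictly below $\pi$ and approaches $\pi$ as $\theta\to\pi^-$. By strict monotonicity and the definition of $u_t$, the set
\[
S := \{re^{i\theta}: 0 < r < +\infty,\ u_t(r) < \theta < \pi\}
\]
is exactly where $f_t(r,\theta) > 0$; on $S$ one has $\theta f_t(r,\theta) \in (0,\pi)$, so $\Im H_{t,\nu}(z) > 0$ and $S \subseteq \{z \in \mathbb{C}^+ : \Im H_{t,\nu}(z) > 0\}$.

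To upgrade this to $S = \Gamma_{t,\nu}$, I would first check that $S$ is path-connected with $(-\infty,0) \subseteq \overline{S}$: each radial slice is an interval opening toward $\theta=\pi$, and (by the implicit function theorem applied to $f_t$) $u_t$ is continuous and strictly less than $\pi$ on any compact subinterval of $(0,+\infty)$, so a two-step path through $\theta$ sufficiently close to $\pi$ links any two points of $S$; in addition, letting $\theta\to\pi$ brings $re^{i\theta}$ to $-r$. This forces $S \subseteq \Gamma_{t,\nu}$. For the reverse inclusion, I would take $z_0 \in \Gamma_{t,\nu}$ and join it to a point $z_1$ near $(-\infty,0)$ by a continuous path inside $\Gamma_{t,\nu}$; along this path the principal argument of $H_{t,\nu}$ stays in $(0,\pi)$ and varies continuously, and near $z_1$ it coincides with the expression $\theta f_t(r,\theta)$, so by continuity it coincides along the entire path. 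Evaluating at $z_0$ yields $\theta_0 f_t(r_0,\theta_0) \in (0,\pi)$, hence $f_t(r_0,\theta_0) > 0$ and $z_0 \in S$, completing (1).

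Assertions (2) and (3) I would then deduce by a direct topological analysis of the characterization in (1). For (2), a point $re^{i\theta} \in \mathbb{C}^+$ lies in $\partial\Gamma_{t,\nu}$ iff it is a limit of points of $S$ but is itself not in $S$; continuity and strict monotonicity of $f_t$ in $\theta$ then force $f_t(r,\theta) = 0$, and the resulting equality $\theta = u_t(r) > 0$ is equivalent to $r \in V_{t,\nu}$. For (3), a point $r > 0$ is approachable by $S$ iff $u_t(r) = 0$, equivalently $f_t(r) \geq 0$, equivalently $r \notin V_{t,\nu}$; the stated integral characterization follows on unwinding the definitions of $V_{t,\nu}$ and $f_t(r)$.

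The main obstacle I anticipate is in the second half of (1): one must be sure that the continuous expression $\theta f_t(r,\theta)$ actually equals the principal argument of $H_{t,\nu}$ throughout $\Gamma_{t,\nu}$, and does not differ from it by a nonzero multiple of $2\pi$ once transported across an arbitrary path. The resolution rests precisely on the defining property of $\Gamma_{t,\nu}$, namely that its boundary contains $(-\infty,0)$; near this boundary both expressions are close to $\pi$, which pins down the correct branch globally.
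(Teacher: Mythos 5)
Your proof is correct and follows the same approach as the paper: use (\ref{eq:4.1}) to express $\Im\log H_{t,\nu}(re^{i\theta})=\theta f_t(r,\theta)$, invoke the monotonicity of $f_t(r,\cdot)$ together with $f_t<1$ and $\lim_{\theta\to\pi}f_t(r,\theta)=1$ to identify $\Gamma_{t,\nu}$ with $\{f_t>0\}$, and then read off (2) and (3). The paper's proof is essentially a one-line deduction for part (1); you correctly isolate and resolve the global branch issue (that $\theta f_t(r,\theta)$ agrees with the principal argument of $H_{t,\nu}$ on $\Gamma_{t,\nu}$ rather than differing by a nonzero multiple of $2\pi$) which the paper leaves implicit, pinning the branch down via the behavior near $(-\infty,0)$.
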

\begin{proof}
 Since $\arg(H_{t,\nu})=\theta \cdot f_t(r,\theta)$, and the function $f_{t,r}(\theta):=f_t(r,\theta)$
 is an increasing function of $\theta$ on $(0,\pi)$, notice that $\lim_{\theta\rightarrow\pi}f_{t,r}(\theta)=1>0$,
 then from (\ref{eq:4.1}), we deduce that 
    \begin{equation}\nonumber
       \Gamma_{t,\nu}=\{z=re^{i\theta}:0<r<+\infty, f_t(r,\theta)>0 \},
    \end{equation} 
which yields (1) and (2). Part (3) follows from the definition of $u_t$ and (2).
\end{proof}

From (\ref{eq:2.6}), we have the following useful formula.
 \begin{equation}\label{eq:2.7}
     \begin{split}
   \frac{z}{\omega_t(z)}&=\Sigma_{\tau_t}(\omega_t(z))
             =\Sigma_{\sigma_t}(\eta_{\nu}(\omega_t(z)))\\
               &=\exp\left(\frac{t}{2}\frac{\eta_{\nu_t}(z)+1}{\eta_{\nu_t}(z)-1} \right),
     \end{split}
 \end{equation}
which yields the next result. 
\begin{prop}
 The function $\eta_{\nu}$ has a continuous extension to $\overline{\Gamma_{t,\nu}}\backslash\{0\}$,
 and the function $\eta_{\nu_t}$ has a continuous extension to $\mathbb{C}^+\cup(\mathbb{R}\backslash\{0\})$.
\end{prop}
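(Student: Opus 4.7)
The plan is to invert the functional equation (2.7) to express $\eta_{\nu_t}$ continuously in terms of $\omega_t$, then exploit the fact that $\omega_t$ already has a continuous extension to $\mathbb{C}^+\cup\mathbb{R}$ by Proposition \ref{prop:2.6}. The first step is to observe that $\omega_t(z)\neq 0$ for every $z\in\mathbb{C}^+\cup(\mathbb{R}\setminus\{0\})$: this follows from the injectivity of the extended $\omega_t$ on $\mathbb{C}^+\cup\mathbb{R}$ (Proposition \ref{prop:2.6}) combined with $\omega_t(0)=0$ (since $\omega_t=\eta_{\tau_t}$). Consequently $z/\omega_t(z)$ is well-defined, continuous, and nonzero on $\mathbb{C}^+\cup(\mathbb{R}\setminus\{0\})$.

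The key analytic input is a uniform bound on the imaginary part of the exponent in (2.7). Writing $\omega_t(z)=re^{i\theta}\in\Gamma_{t,\nu}$ for $z\in\mathbb{C}^+$, and using $\eta_{\nu_t}(z)=\eta_{\nu}(\omega_t(z))$ together with the explicit form of $\Sigma_{\tau_t}$ from (\ref{eq:2.6}), a direct computation gives
\[
\Im\!\left(\frac{t}{2}\frac{\eta_{\nu_t}(z)+1}{\eta_{\nu_t}(z)-1}\right)=-t\sin\theta\int_0^{+\infty}\frac{r\xi}{|1-r\xi e^{i\theta}|^2}\,d\nu(\xi).
\]
Since $re^{i\theta}\in\Gamma_{t,\nu}$, Theorem \ref{thm:4.2} yields $f_t(r,\theta)\geq 0$, which upon rearranging the definition of $f_t$ gives
\[
t\sin\theta\int_0^{+\infty}\frac{r\xi}{|1-r\xi e^{i\theta}|^2}\,d\nu(\xi)\;\leq\;\theta\;\leq\;\pi.
\]
This $\pi$-bound confines the imaginary part of $\tfrac{t}{2}\tfrac{\eta_{\nu_t}(z)+1}{\eta_{\nu_t}(z)-1}$ to an interval of length at most $2\pi$, so a single continuous branch of $\log(z/\omega_t(z))$ on $\mathbb{C}^+$ can be singled out that matches this exponent throughout $\mathbb{C}^+$.

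With such a branch fixed, solving (2.7) algebraically yields
\[
\eta_{\nu_t}(z)\;=\;\frac{1+\tfrac{2}{t}\log\!\bigl(z/\omega_t(z)\bigr)}{\tfrac{2}{t}\log\!\bigl(z/\omega_t(z)\bigr)-1},
\]
expressing $\eta_{\nu_t}$ continuously in terms of $z/\omega_t(z)$. Since the latter extends continuously (and nonzero) to $\mathbb{C}^+\cup(\mathbb{R}\setminus\{0\})$, and the imaginary part bound is preserved in the limit so the chosen branch extends continuously to the closure, we obtain the continuous extension of $\eta_{\nu_t}$ to $\mathbb{C}^+\cup(\mathbb{R}\setminus\{0\})$.

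For the first assertion, Proposition \ref{prop:2.6} upgrades $\omega_t$ to a homeomorphism from $\mathbb{C}^+\cup\mathbb{R}$ onto $\overline{\Gamma_{t,\nu}}$ with $\omega_t(0)=0$, so $\omega_t^{-1}$ is a continuous bijection sending $\overline{\Gamma_{t,\nu}}\setminus\{0\}$ onto $\mathbb{C}^+\cup(\mathbb{R}\setminus\{0\})$. Writing $\eta_{\nu}(w)=\eta_{\nu_t}(\omega_t^{-1}(w))$ on $\Gamma_{t,\nu}$ and composing with the continuous extension of $\eta_{\nu_t}$ just established, we obtain the continuous extension of $\eta_{\nu}$ to $\overline{\Gamma_{t,\nu}}\setminus\{0\}$. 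The main obstacle is the branch selection for the logarithm: without the $\pi$-bound, the argument of $z/\omega_t(z)$ could wrap around and produce a jump in the inversion formula; establishing that $re^{i\theta}\in\Gamma_{t,\nu}$ forces $f_t(r,\theta)\geq 0$, hence the estimate above, is what makes the whole scheme work.
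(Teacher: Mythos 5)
Your proof is correct and follows essentially the same route as the paper's: both rest on noting $\omega_t(0)=0$, $\omega_t\neq 0$ elsewhere, computing the imaginary part of the exponent in \eqref{eq:2.7}, and bounding it in $[-\pi,0]$ via the characterization of $\Gamma_{t,\nu}$ (the paper appeals to the definition of $u_t(r)$ from \eqref{eq:4.0001} directly; you invoke Theorem~\ref{thm:4.2}, which encodes the same inequality). The extra details you spell out --- the explicit algebraic inversion of \eqref{eq:2.7} for $\eta_{\nu_t}$, and the observation that $\omega_t$ together with $H_{t,\nu}$ give mutually inverse continuous maps between $\mathbb{C}^+\cup\mathbb{R}$ and $\overline{\Gamma_{t,\nu}}$ --- are sound and simply make explicit what the paper leaves to the phrase ``by applying Proposition~\ref{prop:2.6}''.
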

\begin{proof}
We first note that $\omega_t(0)=0$ and $\omega_t(z)\neq 0$ for all 
$z\in \mathbb{C}^+\cup(\mathbb{R}\backslash\{0\})$. 

Given $z\in\mathbb{C}^+$, we let $\omega_t(z)=re^{i\theta}$ where $u_t(r)< \theta<\pi$ according
to Theorem \ref{thm:4.2}.
Let $k(z)=\frac{t}{2}\frac{1+z}{z-1}$. By the definition of $\eta$-transform, we have
\[
   k(\eta_{\nu}(z))=\frac{t}{2}\int^{+\infty}_0\frac{1+\xi z}{\xi z-1}d\nu(\xi).
\]
Note that $k(\eta_{\nu}(\omega_t(z)))$ equals the exponent involving in (\ref{eq:2.7}). We thus have
\begin{equation}
   \begin{split}
   \Im k(\eta_{\nu}(\omega_t(z)))&=-t\int^{+\infty}_0\frac{r\xi \sin\theta}{1+r^2\xi^2-2r\xi\cos\theta}d\nu(\xi)\\
     &\geq -\theta \\
     &\geq -\pi,
   \end{split}
\end{equation}
where we used the definition of $u_t(r)$ in (\ref{eq:4.0001}).
It is clear that $\Im k(\eta_{\nu}(\omega_t(z)))\leq 0$.
Thus we can take logarithms on
both sides of (\ref{eq:2.7}). By applying Proposition \ref{prop:2.6}, we 
deduce our assertion. 
\end{proof}

\begin{lemma}\label{lemma:4.3}
The support of $\nu$ is contained in the closure of the set $V_{t,\nu}^{-1}$, which is defined by
   \begin{equation}\nonumber
      V_{t,\nu}^{-1}=\left\{\frac{1}{x}:x\in V_{t,\nu}\right\}.
   \end{equation}
\end{lemma}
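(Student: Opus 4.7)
The strategy parallels Lemma \ref{lemma:3.3}. I fix $s_0\in(0,+\infty)\setminus \overline{V_{t,\nu}^{-1}}$ and aim to show $\nu$ has no mass in a neighborhood of $s_0$. Since the complement of $\overline{V_{t,\nu}^{-1}}$ is open in $(0,+\infty)$, I choose $\epsilon>0$ with $\epsilon < s_0/2$ such that $[s_0-\epsilon,s_0+\epsilon]\subset (0,+\infty)\setminus \overline{V_{t,\nu}^{-1}}$. The point of leaving $\overline{V_{t,\nu}^{-1}}$ is that $s\in[s_0-\epsilon,s_0+\epsilon]$ forces $1/s\notin V_{t,\nu}$, hence
\[
  \int_0^{+\infty}\frac{s\xi}{(s-\xi)^2}\,d\nu(\xi)\ \leq\ \frac{1}{t}
\]
by the very definition of $V_{t,\nu}$ (after the change of variable $r=1/s$).

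For each integer $n\geq 2$ I partition the closed interval by $\alpha_k=(s_0-\epsilon)+2k\epsilon/n$ for $0\leq k\leq n$, with common length $\Delta=2\epsilon/n$. On the $k$-th subinterval I test the above bound at a single point $s_k$; to avoid the singularity at $\xi=s_k$ it is cleanest to pick $s_k$ just outside $[\alpha_k,\alpha_{k+1}]$, e.g.\ in the adjacent subinterval (a small technical adjustment is needed for the two endpoint subintervals, which is handled by slightly enlarging the original $\epsilon$-neighborhood so that adjacent subintervals always exist inside the complement of $\overline{V_{t,\nu}^{-1}}$). For $\xi\in[\alpha_k,\alpha_{k+1}]$ this choice gives $|s_k-\xi|\leq 2\Delta$ and $\xi\geq \alpha_k\geq s_0-\epsilon$, whence
\[
  \frac{s_k\xi}{(s_k-\xi)^2}\ \geq\ \frac{(s_0-\epsilon)^2}{4\Delta^2}.
\]
Integrating and combining with the upper bound $1/t$ on the full integral yields
\[
  \nu\bigl([\alpha_k,\alpha_{k+1}]\bigr)\ \leq\ \frac{4\Delta^2}{t\,(s_0-\epsilon)^2}.
\]

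Summing over $k$ gives $\nu\bigl([s_0-\epsilon,s_0+\epsilon]\bigr)\leq 16\epsilon^2/\bigl(n\,t(s_0-\epsilon)^2\bigr)$, which tends to $0$ as $n\to\infty$, so $s_0\notin\mathrm{supp}(\nu)$. This proves the claim for every $s_0>0$ outside $\overline{V_{t,\nu}^{-1}}$, establishing $\mathrm{supp}(\nu)\cap(0,+\infty)\subseteq \overline{V_{t,\nu}^{-1}}$. The main technical nuisance, and the only obstacle of note, is the pole at $\xi=s$ inside the subinterval; this is why I pick $s_k$ in an \emph{adjacent} subinterval rather than inside $[\alpha_k,\alpha_{k+1}]$ itself, which guarantees a clean lower bound on $(s_k-\xi)^2$. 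An equally valid alternative is to place $s_k$ in the subinterval and observe that the finiteness of the integral forces $\nu(\{s_k\})=0$, so the pointwise lower bound on the integrand holds $\nu$-almost everywhere on $[\alpha_k,\alpha_{k+1}]$; both routes give the same result.
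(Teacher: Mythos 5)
Your proof is correct and uses essentially the same subdivision argument as the paper: both partition a small interval on which $1/s\notin V_{t,\nu}$ (so $\int_0^{+\infty}\frac{s\xi}{(s-\xi)^2}\,d\nu(\xi)\leq 1/t$), lower-bound the integrand on each subinterval of length $O(1/n)$ to get $\nu$-mass $O(1/n^2)$ per piece, and let $n\to\infty$. You merely work in the reciprocal coordinate $s=1/r$ throughout and sample at a point $s_k$ in an adjacent subinterval to sidestep the pole, whereas the paper partitions $[x_0-\epsilon,x_0+\epsilon]\subset(0,+\infty)\setminus\overline{V_{t,\nu}}$ directly and estimates $\nu$ on the reciprocal subintervals, handling the pole implicitly through the finiteness of the integral — a cosmetic difference.
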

\begin{proof}
Let $x_0\in(0,+\infty)\backslash \overline{V_{t,\nu}}$, and choose $\epsilon>0$ sufficiently small such that
$[x_0-\epsilon, x_0+\epsilon]\subset (0,+\infty)\backslash \overline{V_{t,\nu}}$, it is enough to show that
$\nu$ has no charge on the interval $[1/(x_0+\epsilon), 1/(x_0-\epsilon)]$. For any integer $n\geq 1$, we define 
$\beta_k=x_0-\epsilon+2k\epsilon/n$ for all $0\leq k \leq n$. Then since $[\beta_k,\beta_{k+1}]\subset
(0,+\infty)\backslash \overline{V_{t,\nu}}$, then for any $r\in [\beta_k,\beta_{k+1}]$, we have that
    \begin{equation}\nonumber
      \begin{split}
        \frac{1}{t}&\geq \int_{1/\beta_{k+1}}^{1/\beta_k}\frac{r\xi}{(1-r\xi)^2}d\nu(\xi) \\
              &\geq \frac{1}{(x_0+\epsilon)^2 m_k}\nu([1/\beta_{k+1},1/\beta_k]),
      \end{split}
    \end{equation}
where $m_k=\max\{|1-\beta_k/\beta_{k+1}|^2, |1-\beta_{k+1}/\beta_k|^2 \}$.
This yields that 
     \begin{equation}\nonumber
        \nu([1/\beta_{k+1},1/\beta_k])\leq \frac{(x_0+\epsilon)^2}{t}m_k.
     \end{equation}
Notice that $\sum_{k=0}^{n-1} m_k=o(1/n)$ for large $n$, thus we have $\nu([1/(x_0+\epsilon), 1/(x_0-\epsilon)])=0$.
\end{proof}
\begin{prop}\label{prop:4.4}
For any interval $(a,b)\subset (0,+\infty)\backslash \overline{V_{t,\nu}}$, the function 
$g(r)=\int_0^{+\infty}\frac{r\xi}{(1-r\xi)^2}d\nu(\xi)$
is strictly convex on $(a,b)$.
\end{prop}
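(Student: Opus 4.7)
The plan is to mirror the strategy of Proposition 3.4, which is the unit circle analogue. The key observation is that the hypothesis $(a,b)\subset (0,+\infty)\backslash \overline{V_{t,\nu}}$ places the integrand's would-be singularity in a region where $\nu$ has no mass, so that $g$ is not merely well-defined but real-analytic on $(a,b)$, and in particular may be differentiated twice under the integral sign.

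First I would translate the hypothesis via Lemma \ref{lemma:4.3}. Since $(a,b)$ is open and disjoint from $\overline{V_{t,\nu}}$, the reciprocal interval $(1/b,1/a)$ is disjoint from $\overline{V_{t,\nu}^{-1}}$, and hence disjoint from $\operatorname{supp}(\nu)$. Consequently, for any $r$ in a compact subinterval $K\subset(a,b)$, the distance from $\{1/r:r\in K\}$ to $\operatorname{supp}(\nu)$ is bounded below by a strictly positive constant, so $|1-r\xi|$ is uniformly bounded away from $0$ for $\xi\in\operatorname{supp}(\nu)$ and $r\in K$. This provides the uniform integrable majorants needed to differentiate under the integral twice.

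Then I would simply compute the second derivative directly. A short calculation gives
\[
g''(r)=\int_0^{+\infty}\frac{2\xi^2(2+r\xi)}{(1-r\xi)^4}\,d\nu(\xi).
\]
For $r>0$ and $\xi\geq 0$, the integrand is non-negative, and strictly positive at every $\xi>0$. Since $\nu\in\Mrealpplus$, i.e.\ $\nu\neq\delta_0$, the measure $\nu$ has positive mass on $(0,+\infty)$, hence $g''(r)>0$ for all $r\in(a,b)$, establishing strict convexity.

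There is no real obstacle to this argument beyond carefully verifying the hypotheses for differentiation under the integral, which is handled uniformly on each compact subinterval by the distance estimate above. The only point that requires a moment's attention is confirming that $g''(r)>0$ is not accidentally zero: this is ruled out by the standing assumption $\nu\neq\delta_0$, which guarantees that the non-negative integrand is integrated against a measure with mass in the region where it is strictly positive.
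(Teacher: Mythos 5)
Your proof is correct and takes essentially the same route as the paper's: invoke Lemma \ref{lemma:4.3} to see that $\nu$ places no mass on $(1/b,1/a)$, hence $g$ is analytic on $(a,b)$, then compute $g''$ and observe it is strictly positive. Incidentally, your second-derivative formula $g''(r)=\int_0^{+\infty}\frac{2\xi^2(2+r\xi)}{(1-r\xi)^4}\,d\nu(\xi)$ is the correct one (the paper's displayed $g''$ has a small typo, writing $2r\xi^2$ where it should be $2r\xi^3$), and your explicit use of $\nu\neq\delta_0$ to rule out $g''\equiv 0$, together with the compact-subinterval majorant justifying differentiation under the integral sign, spells out details the paper leaves tacit.
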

\begin{proof}
We first note that $\nu$ has no charge on the interval $(1/b,1/a)$ by Lemma \ref{lemma:4.3}, thus 
$g$ is analytic on the interval $(a,b)$. The second derivative of $g$ is 
        \begin{equation}\nonumber
          g''(r)=\int_0^{+\infty}\frac{4\xi^2+2r\xi^2}{(1-r\xi)^4}d\nu(\xi)>0,
        \end{equation}
which yields the desired conclusion.
\end{proof}
Proposition \ref{prop:4.4} yields that $\int_0^{+\infty}\frac{r\xi}{(1-r\xi)^2}d\nu(\xi)<1/t$ for $r$
in the interior of $(0,+\infty)\backslash V_{t,\nu}$.

We now define a map $\Lambda_{t,\nu}:(0,+\infty)\rightarrow (0,+\infty)$ by 
      \begin{equation}\nonumber
         \Lambda_{t,\nu}(r)=H_{t,\nu}(re^{i u_t(r)}).
      \end{equation}
Note that $\arg(H_{t,\nu}(re^{i u_t(r)}))=0$, we then have 
      \begin{equation}\nonumber
          \Lambda_{t,\nu}(r)=r\exp\left(\frac{t}{2}\int_0^{+\infty}\frac{r^2\xi^2-1}{|1-r\xi e^{i u_t(r)}|^2}d\nu(\xi) \right).
      \end{equation}
\begin{prop}
The map $r\rightarrow re^{iu_t(r)}$ is a homeomorphism from $(0,+\infty)$ onto 
\[\partial\Gamma_{t,\nu}\backslash (-\infty,0].\] 
The map $\Lambda_{t,\nu}$ is a homeomorphism from $(0,+\infty)$ onto itself.
\end{prop}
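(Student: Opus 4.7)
The plan is, following the proof of the analogous proposition in Section~3, to reduce both assertions to continuity: the bijection content will come directly from Theorem~\ref{thm:4.2} and Proposition~\ref{prop:2.6}.

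For the map $r\mapsto re^{iu_t(r)}$, injectivity is immediate since the modulus recovers $r$. For surjectivity onto $\partial\Gamma_{t,\nu}\setminus(-\infty,0]$, any such boundary point has polar form $re^{i\theta}$ with $r>0$ and $\theta\in[0,\pi)$; Theorem~\ref{thm:4.2}~(2)-(3) yield either $\theta>0$ with $f_t(r,\theta)=0$ or $\theta=0$ with $f_t(r,0)\ge 0$, and the strict monotonicity of $f_t(r,\cdot)$ on $(0,\pi)$ established in the proof of the first lemma of this section (both $(\sin\theta)/\theta$ and the integral factor are strictly decreasing) pins down $\theta=u_t(r)$ in either case. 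Continuity of $u_t$ at $r_0>0$ follows by taking $r_n\to r_0$ and any accumulation point $\theta^{*}\in[0,\pi]$ of $(u_t(r_n))$; passing to the limit in $f_t(r_n,u_t(r_n))=0$ or in $f_t(r_n,0)\ge 0$ and invoking continuity of $f_t$ together with uniqueness of the zero of $f_t(r_0,\cdot)$ forces $\theta^{*}=u_t(r_0)$. The inverse $z\mapsto|z|$ is trivially continuous.

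For $\Lambda_{t,\nu}=H_{t,\nu}\circ(r\mapsto re^{iu_t(r)})$, continuity is immediate by composition, since $H_{t,\nu}$ is continuous on $\overline{\Gamma_{t,\nu}}\setminus\{0\}$. For bijectivity, I identify $\Lambda_{t,\nu}$ as the inverse of $s\mapsto|\omega_t(s)|$. By Proposition~\ref{prop:2.6}, $\omega_t$ is a continuous injection $\mathbb{C}^+\cup\mathbb{R}\to\overline{\Gamma_{t,\nu}}$ with $\omega_t(\mathbb{R})\subset\partial\Gamma_{t,\nu}$; a direct real-line computation with $\eta_{\tau_t}$ gives $\omega_t((-\infty,0])=(-\infty,0]$, so $\omega_t|_{(0,+\infty)}$ maps into $\partial\Gamma_{t,\nu}\setminus(-\infty,0]$. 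Combined with the asymptotics $H_{t,\nu}(z)\sim e^{t/2}z$ at infinity (obtained from $\Sigma_{\tau_t}\to e^{t/2}$), so $|\omega_t(s)|\to\infty$ as $s\to\infty$, and $\omega_t(0)=0$, this restriction is a continuous bijection onto $\partial\Gamma_{t,\nu}\setminus(-\infty,0]$. By Part~1, $\omega_t(s)=|\omega_t(s)|\,e^{iu_t(|\omega_t(s)|)}$ for $s>0$, so $H_{t,\nu}(\omega_t(s))=s$ reads $\Lambda_{t,\nu}(|\omega_t(s)|)=s$, identifying $\Lambda_{t,\nu}$ with the inverse homeomorphism.

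The main obstacle is establishing that $\omega_t|_{(0,+\infty)}$ is truly \emph{surjective} onto $\partial\Gamma_{t,\nu}\setminus(-\infty,0]$; injectivity and continuity are handed to us by Proposition~\ref{prop:2.6}, but ruling out gaps requires combining the Jordan-curve structure of $\partial\Gamma_{t,\nu}$ with the reality of $\eta_{\tau_t}$ on $(-\infty,0)$. Everything else is routine bookkeeping from Theorem~\ref{thm:4.2} and the monotonicity of $f_t$ in $\theta$.
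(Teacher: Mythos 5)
Your proof is correct and follows essentially the same route as the paper: both reduce the claims to the continuity of $u_t$ and lean on Theorem~\ref{thm:4.2} and Proposition~\ref{prop:2.6} for the bijectivity content. The paper's proof is terser --- it handles $u_t(r_0)>0$ by joint continuity of $f_t$ and $u_t(r_0)=0$ by observing $\{u_t=0\}=(0,+\infty)\setminus V_{t,\nu}$ is a union of closed intervals, leaving the homeomorphism statement for $\Lambda_{t,\nu}$ implicit in $H_{t,\nu}$ and $\omega_t$ being mutually inverse on $\overline{\Gamma_{t,\nu}}$ and $\mathbb{C}^+\cup\mathbb{R}$ --- whereas you make the bijectivity of $\Lambda_{t,\nu}$ explicit by identifying it with the inverse of $s\mapsto|\omega_t(s)|$ and checking surjectivity via the behavior of $\omega_t$ at $0$ and at infinity. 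One small imprecision to flag in your continuity argument: when the accumulation point is $\theta^{*}=0$ you invoke ``continuity of $f_t$,'' but $f_t(r,0)=f_t(r)=1-t\int_0^{\infty}\frac{r\xi}{(1-r\xi)^2}\,d\nu(\xi)$ is only \emph{upper} semicontinuous in $r$ (the integral is lower semicontinuous by Fatou and may jump up). What is actually needed is openness of $V_{t,\nu}$ and the inequality $f_t(r_0)\geq\limsup f_t(r_n)$, which is exactly what the paper's ``union of closed intervals'' remark encodes; the conclusion you draw is still correct.
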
  
\begin{proof}
It is enough to prove that the function $u_t$ is continuous. 
By the continuity of the function $f_t(r,\theta)$, it is easy to see that
$u_t$ is continuous at $r_0$ if $u_t(r_0)>0$ for $r_0\in\mathbb{R}^+$.
By the results concerning the boundary of $\Gamma_{t,\nu}$ (Proposition \ref{prop:2.6} and
Theorem \ref{thm:4.2}), the set $\{r\in\mathbb{R}^+:u_t(r)=0 \}$ consisting
of a union of closed intervals. Therefore, $u_t$ is continuous on $\mathbb{R}^+$.
\end{proof}

Define the Cauchy transform of the probability measure $\nu$ by 
    \begin{equation}\nonumber
      \g{\nu}(z)=\int_{-\infty}^{+\infty}\frac{1}{z-t}d\nu(t), \, z\in\mathbb{C}\backslash \mathbb{R},
    \end{equation}
we then have 
   \begin{equation}\label{eq:4.7}
      \g{\nu}\left(\frac{1}{z} \right)=\frac{z}{1-\eta_{\nu}(z)},
   \end{equation}
thus the density of the measures can be recovered by (\ref{eq:4.7}) from
the Stieltjes inverse formula. We notice that $1/(1-\eta_{\nu}(z))=1+\psi_{\nu}(z)$,
which implies that 
   \begin{equation}\nonumber
      \begin{split}
     \Im\left(\frac{1}{1-\eta_{\nu}(z)} \right)&=\Im\left(\int_0^{+\infty}\frac{1}{1-z\xi}d\nu(\xi) \right)\\
                     &=\int_0^{+\infty}\frac{r\xi \sin\theta}{1+r^2\xi^2-2r\xi\cos\theta}d\nu(\xi),
      \end{split}
   \end{equation}
where $z=re^{i\theta}$.
From Remark \ref{rmk:3}, for $z=re^{iu_t(r)}$, we have that
       \begin{equation}\label{eq:4.8}
          \Im\left( \frac{1}{1-\eta_{\nu}\left(re^{iu_t(r)}\right)} \right) = \frac{u_t(r)}{t}.
       \end{equation}
Equations (\ref{eq:4.7}) and (\ref{eq:4.8}), together with the Stieltjes inverse formula yield the following theorem.
\begin{thm}\label{thm:4.6}
  The measure $\nu_t$ has a density given by
      \begin{equation}\nonumber
         q_t \left(\frac{1}{\Lambda_{t,\nu}(r)} \right)=\Lambda_{t,\nu}(r)\frac{1}{\pi}\frac{u_t(r)}{t}
      \end{equation}
for $r\in (0,+\infty)$.
\end{thm}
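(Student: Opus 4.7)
The plan is to apply the Stieltjes inversion formula to $\nu_t$ and evaluate it using the subordination relation $\eta_{\nu_t}(z)=\eta_{\nu}(\omega_t(z))$ together with the boundary identity \refto{4.8}. Writing $\omega=\Lambda_{t,\nu}(r)\in(0,+\infty)$, the density $q_t$ at $1/\omega$ is $-\pi^{-1}\lim_{\epsilon\to 0^+}\Im G_{\nu_t}(1/\omega+i\epsilon)$, and by \refto{4.7} applied to $\nu_t$ I rewrite
\begin{equation*}
G_{\nu_t}(1/\omega+i\epsilon)=\frac{z_\epsilon}{1-\eta_{\nu}(\omega_t(z_\epsilon))}, \qquad z_\epsilon:=\frac{1}{1/\omega+i\epsilon}.
\end{equation*}
Note that $z_\epsilon \to \omega$ with $\Im z_\epsilon<0$, so $z_\epsilon$ approaches $\omega$ from the lower half plane.

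The next step is to identify the limiting value of $\omega_t(z_\epsilon)$. By Proposition \ref{prop:2.6}, $\omega_t$ extends continuously to $\mathbb{C}^+\cup\mathbb{R}$, and the symmetry $\omega_t(\bar z)=\overline{\omega_t(z)}$ provides a continuous extension to $\mathbb{C}^-\cup\mathbb{R}$ as well. Since $\Lambda_{t,\nu}(r)=H_{t,\nu}(re^{iu_t(r)})=\omega$ by construction, and since $\omega_t$ is inverse to $H_{t,\nu}$ on $\Gamma_{t,\nu}$, the one-sided limit of $\omega_t$ at $\omega$ from $\mathbb{C}^+$ equals $re^{iu_t(r)}$; the corresponding limit from $\mathbb{C}^-$ is therefore $re^{-iu_t(r)}$. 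Passing to the limit,
\begin{equation*}
\lim_{\epsilon\to 0^+}G_{\nu_t}(1/\omega+i\epsilon) = \frac{\omega}{1-\eta_{\nu}(re^{-iu_t(r)})} = \omega\cdot\overline{\left(\frac{1}{1-\eta_{\nu}(re^{iu_t(r)})}\right)}.
\end{equation*}

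By \refto{4.8}, the imaginary part of $1/(1-\eta_{\nu}(re^{iu_t(r)}))$ equals $u_t(r)/t$, and hence the imaginary part of its conjugate equals $-u_t(r)/t$. Multiplying by the positive real factor $\omega$ yields $\Im G_{\nu_t}(1/\omega+i0^+)=-\omega\, u_t(r)/t$, and the Stieltjes inversion formula then produces the claimed identity $q_t(1/\omega)=\omega\, u_t(r)/(\pi t)=\Lambda_{t,\nu}(r)\,u_t(r)/(\pi t)$. The computation itself is essentially a routine chain of substitutions; the one point requiring care is tracking the direction of approach, since approaching $1/\omega$ from $\mathbb{C}^+$ forces the preimage $z_\epsilon$ under $w\mapsto 1/w$ to approach $\omega$ from $\mathbb{C}^-$. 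This is precisely what produces the conjugation $re^{iu_t(r)}\mapsto re^{-iu_t(r)}$ and the minus sign in $\Im G_{\nu_t}$ that is needed for a nonnegative density.
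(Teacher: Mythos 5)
Your proof is correct and takes essentially the same route as the paper: apply Stieltjes inversion at $x=1/\Lambda_{t,\nu}(r)$, rewrite $G_{\nu_t}$ via the identity \refto{4.7}, pass through the subordination $\eta_{\nu_t}=\eta_\nu\circ\omega_t$ with $\omega_t\circ H_{t,\nu}=\mathrm{id}$, and conclude with \refto{4.8}. The one difference is that the paper's displayed chain of equalities silently writes $q_t=\frac{1}{\pi}\Im G_{\nu_t}$ and evaluates $\omega_t$ at the boundary value $re^{iu_t(r)}$ from $\overline{\Gamma_{t,\nu}}$, i.e.\ it implicitly uses the boundary limit from $\mathbb{C}^-$ so that the missing minus sign in the Stieltjes formula is compensated; you make this sign bookkeeping explicit by tracking that $z_\epsilon=1/(1/\omega+i\epsilon)$ approaches $\omega$ from the lower half plane, which produces the conjugate $re^{-iu_t(r)}$ and hence the required negative imaginary part. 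That is a genuine clarification of a point the paper glosses over, not a different proof.
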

\begin{proof}
It is a direct calculation. 
    \begin{equation}\nonumber
       \begin{split}
          q_t \left(\frac{1}{\Lambda_{t,\nu}(r)} \right)&=q_t\left(\frac{1}{H_{t,\nu}(re^{iu_t(r)})} \right)\\
                      &=\frac{1}{\pi}\Im\left(\g{\nu_t}\left(\frac{1}{H_{t,\nu}(re^{iu_t(r)})} \right) \right)\\
                      &=\frac{1}{\pi}H_{t,\nu}(re^{iu_t(r)}) \Im\left( \frac{1}{1-\eta_{\nu_t}(H_{t,\nu}(re^{iu_t(r)}))} \right)\\
                      &=\frac{1}{\pi}H_{t,\nu}(re^{iu_t(r)})\Im\left( \frac{1}{1-\eta_{\nu}\left(re^{iu_t(r)}\right)} \right)\\
                      &=\Lambda_{t,\nu}(r)\frac{1}{\pi}\frac{u_t(r)}{t}.
       \end{split}
    \end{equation}
\end{proof}
\begin{cor}
Let $B_t$ be the support of the measure $\nu_t$, and let $B_t^{-1}=\{1/x: x\in B_t\}$, then $B_t^{-1}$ equals
to the image of the closure of $V_{t,\nu}$ by the homeomorphism $\Lambda_{t,\nu}$. The number of 
connected components of the support of $\nu_t$ is a non-increasing function of $t$.
\end{cor}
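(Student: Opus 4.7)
The plan is to mirror the proof of Corollary~\ref{cor:3.8} from the unit circle setting, substituting the positive-half-line analogues, and there are essentially three pieces to assemble. First, I would pin down where the density is positive: by Theorem~\ref{thm:4.6}, $q_t(1/\Lambda_{t,\nu}(r)) > 0$ precisely when $u_t(r) > 0$, and since $f_t(r,\theta)$ is increasing in $\theta$ with $\lim_{\theta \to 0^+} f_t(r,\theta) = f_t(r)$ and $\lim_{\theta \to \pi^-} f_t(r,\theta) = 1$, we get $u_t(r) > 0$ if and only if $f_t(r) < 0$, that is, $r \in V_{t,\nu}$.

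For the first assertion, the proposition preceding Theorem~\ref{thm:4.6} says $\Lambda_{t,\nu}$ is a homeomorphism of $(0,+\infty)$ onto itself, and $x \mapsto 1/x$ is also a homeomorphism. Hence the relatively open subset of $(0,+\infty)$ where $q_t > 0$ equals $\{1/\Lambda_{t,\nu}(r) : r \in V_{t,\nu}\}$, whose closure is $B_t \cap (0,+\infty)$. Therefore $B_t^{-1} = \Lambda_{t,\nu}(\overline{V_{t,\nu}})$, as claimed.

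For the monotonicity assertion, since homeomorphisms preserve the number of connected components, it suffices to show that the number of components of $V_{t,\nu}$ is a non-increasing function of $t$. For $t' > t$ we have $V_{t,\nu} \subset V_{t',\nu}$ trivially. On each bounded connected component $(a,b)$ of $(0,+\infty) \setminus \overline{V_{t,\nu}}$, Proposition~\ref{prop:4.4} gives that $g(r) := \int_0^{+\infty} r\xi/(1-r\xi)^2 \, d\nu(\xi)$ is strictly convex with $g \le 1/t$. Thus $\{r \in (a,b) : g(r) > 1/t'\}$ is a union of at most two subintervals, each adjacent to an endpoint of $(a,b)$, and so merges with a neighboring component of $V_{t,\nu}$ rather than creating a new component of $V_{t',\nu}$.

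The main remaining obstacle is a boundary check at the two unbounded ends of $(0,+\infty)$: one has to verify that unbounded components of the complement of $\overline{V_{t,\nu}}$ near $0$ or near $+\infty$ also do not spawn new components as $t$ grows. Using the fact that $g(r) \to 0$ as $r \to 0^+$ together with the control on the support of $\nu$ from Lemma~\ref{lemma:4.3} and the strict convexity from Proposition~\ref{prop:4.4}, the same reasoning extends to the infinite ends, and the conclusion then follows exactly as in Corollary~\ref{cor:3.8}.
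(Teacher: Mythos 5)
Your proof is correct and follows essentially the same route as the paper: Theorem~\ref{thm:4.6} together with the equivalence $u_t(r)>0 \Leftrightarrow r\in V_{t,\nu}$ (and the fact that $\Lambda_{t,\nu}$ and $x\mapsto 1/x$ are homeomorphisms) gives the first assertion, and the second reduces to showing the number of components of $V_{t,\nu}$ is non-increasing in $t$, for which the paper simply cites Proposition~\ref{prop:4.4}. You actually supply more detail than the paper here by spelling out the convexity-to-monotonicity argument; the only loose thread is your hand-wave at the unbounded ends, which you flag but do not fully close (though the paper elides this too): both are handled by strict convexity alone, since on an unbounded complementary interval $g$ is bounded above by $1/t$ and strictly convex, hence eventually monotone toward the unbounded end, so $\{r : g(r)>1/t'\}$ within it is a single subinterval adjacent to the finite endpoint and merges with the neighboring component of $V_{t,\nu}$.
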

\begin{proof}
By the definitions of $V_{t,\nu}$ and $u_t(r)$, we have 
      \begin{equation}\label{eq:4.44}
          \begin{split}
             \{r:u_t(r)>0 \}&=\{r: \exists\,\,0<\theta<\pi \,\,\text{so that}\,\,f_t(r,\theta)=0 \} \\
                      &=\{r:f_t(r)<0\}\\
                      &=V_{t,\nu}.
          \end{split}
      \end{equation}
Theorem \ref{thm:4.6} and the fact that 
       \begin{equation}\nonumber
         u_t(r)> 0\Leftrightarrow  r\in V_{t,\nu}
       \end{equation}
yield the first assertion. To prove the second assertion, it is enough
to show that the number of connected components of $V_{t,\nu}$ is a non-increasing function
of $t$, which follows from Proposition \ref{prop:4.4}.
\end{proof}
The following lemma was pointed out in \cite{BB2005}.
\begin{lemma}[\cite{BB2005}]\label{lemma:100}
 Given a probability measure $\mu$ on $[0,+\infty)$, the point $x=0$ is an atom if and only if 
 $\lim_{x\downarrow  -\infty}\eta_{\mu}(x)$ is finite and the value of the limit is 
       \begin{equation}\nonumber
          \lim_{x\downarrow  -\infty}\eta_{\mu}(x)=1-\frac{1}{\mu(\{0\})}.
       \end{equation}
\end{lemma}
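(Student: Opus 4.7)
The plan is to unwind the definition $\eta_\mu(x) = \psi_\mu(x)/(1+\psi_\mu(x))$ and analyze the numerator and denominator separately as $x \to -\infty$ along the real axis. Writing $x = -s$ with $s > 0$, the key observation is that
\[
 \psi_\mu(-s) = \int_0^{+\infty}\frac{-ts}{1+ts}\,d\mu(t), \qquad 1+\psi_\mu(-s) = \int_0^{+\infty}\frac{1}{1+ts}\,d\mu(t),
\]
and both integrands behave nicely as $s\to +\infty$: for each $t>0$, $ts/(1+ts) \nearrow 1$ and $1/(1+ts)\searrow 0$ monotonically, while at $t=0$ the first is $0$ and the second is $1$.

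First I would apply monotone convergence to both integrals. This gives $\psi_\mu(-s) \to -(1-\mu(\{0\}))$ and $1+\psi_\mu(-s) \to \mu(\{0\})$ as $s\to +\infty$. Note that $1+\psi_\mu(-s) > 0$ strictly for every finite $s$, so the quotient is well-defined throughout.

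Next I would split into cases. If $\mu(\{0\}) > 0$, then the denominator has a positive finite limit, so
\[
 \lim_{s\to+\infty}\eta_\mu(-s) = \frac{-(1-\mu(\{0\}))}{\mu(\{0\})} = 1 - \frac{1}{\mu(\{0\})},
\]
which is finite and matches the claimed formula. Conversely, if $\mu(\{0\}) = 0$, the denominator tends to $0$ from above while the numerator tends to $-1$, so $\eta_\mu(-s) \to -\infty$; in particular the limit fails to be finite. This gives the equivalence.

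The argument is essentially a routine application of monotone convergence, so I do not anticipate a serious obstacle; the only point that requires a moment of care is verifying that $1+\psi_\mu(-s)$ stays strictly positive so that dividing is legitimate, which is immediate from the representation $1+\psi_\mu(-s) = \int (1+ts)^{-1}\,d\mu(t)$ with $\mu$ a probability measure.
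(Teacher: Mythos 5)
The paper itself does not prove this lemma; it is stated and cited from [BB2005] without argument. Your proof is therefore a standalone verification, and it is correct.

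The key identities $\psi_\mu(-s) = -\int_0^{+\infty}\frac{ts}{1+ts}\,d\mu(t)$ and $1+\psi_\mu(-s) = \int_0^{+\infty}\frac{1}{1+ts}\,d\mu(t)$ are right, and the monotonicity checks hold: $\frac{ts}{1+ts}$ is non-decreasing in $s$ with pointwise limit $\mathbf{1}_{\{t>0\}}$, and $\frac{1}{1+ts}$ is non-increasing in $s$, bounded by $1$, with pointwise limit $\mathbf{1}_{\{t=0\}}$. Monotone (or dominated) convergence then gives $\psi_\mu(-s)\to -(1-\mu(\{0\}))$ and $1+\psi_\mu(-s)\to\mu(\{0\})$, and the two cases $\mu(\{0\})>0$ and $\mu(\{0\})=0$ deliver exactly the stated equivalence, with the limit $1-1/\mu(\{0\})$ in the former case and divergence to $-\infty$ in the latter. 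The observation that the denominator $1+\psi_\mu(-s)$ stays strictly positive for every finite $s$ is the one place where a reader might pause, and you handled it. Nothing to fix.
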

\begin{prop}
The measure $\nu_t$ is absolutely continuous with respect to Lebesgue measure and its  
density $q_t$ is analytic on the set $\{x\in (0,+\infty):q_t(x)>0\}$.
\end{prop}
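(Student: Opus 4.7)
The plan is to adapt the proof of the analogous result for $\mu_t=\mu\boxtimes\lambda_t$ on the unit circle (the proposition immediately preceding Theorem~\ref{thm:3.7}), using the identity \eqref{eq:2.7}
\[
\frac{z}{\omega_t(z)}=\exp\!\left(\frac{t}{2}\frac{\eta_{\nu_t}(z)+1}{\eta_{\nu_t}(z)-1}\right)
\]
in place of the corresponding identity on the disc, together with the boundary regularity of $\omega_t$ given by Proposition~\ref{prop:2.6}. The basic idea is that the left-hand side is well-behaved at the boundary, which forces the boundary values of $\eta_{\nu_t}$ to avoid $1$, and hence the Cauchy transform of $\nu_t$ to have finite boundary values on $(0,+\infty)$.

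First I would prove absolute continuity. By Proposition~\ref{prop:2.6}, $\omega_t$ extends continuously and one-to-one to $\mathbb{C}^+\cup\mathbb{R}$, with $\omega_t(0)=0$; by injectivity, $\omega_t(x)\neq 0$ for every $x\in(0,+\infty)$. Consequently, the left-hand side of the displayed identity has a continuous, nonzero extension to $(0,+\infty)$, which forces the exponent on the right-hand side to remain bounded on compact subsets of $(0,+\infty)$. This keeps $\eta_{\nu_t}(x+i0)$ uniformly bounded away from $1$. Via \eqref{eq:4.7}, $G_{\nu_t}$ therefore has finite non-tangential boundary values at every point of $(0,+\infty)$, which rules out the presence of a singular part of $\nu_t$ in $(0,+\infty)$. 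The possible atom at the origin is controlled by Lemma~\ref{lemma:100}.

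For the analyticity of $q_t$ on $\{x:q_t(x)>0\}$, I would proceed as follows. Suppose $q_t(y_0)>0$ for some $y_0\in(0,+\infty)$. By Theorem~\ref{thm:4.6} and the corollary following it, we can write $y_0=1/\Lambda_{t,\nu}(r_0)$ where $r_0\in V_{t,\nu}$, so $u_t(r_0)>0$. Because $\Lambda_{t,\nu}(r_0)=H_{t,\nu}(r_0e^{iu_t(r_0)})$ and $\omega_t\circ H_{t,\nu}=\mathrm{id}$ on $\Gamma_{t,\nu}$ (extended continuously to the boundary), the boundary value satisfies $\omega_t(1/y_0)=r_0e^{iu_t(r_0)}$, which lies in the open upper half plane since $u_t(r_0)\in(0,\pi)$. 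Part~(2) of Proposition~\ref{prop:2.6} then gives an analytic continuation of $\omega_t$ to a neighborhood of $1/y_0$, so $\eta_{\nu_t}=\eta_\nu\circ\omega_t$ extends analytically across $1/y_0$. Since $\eta_{\nu_t}(1/y_0)$ has nonzero imaginary part, it differs from $1$, so the identity \eqref{eq:4.7} produces an analytic extension of $G_{\nu_t}$ across $y_0$; the Stieltjes inversion formula then makes $q_t$ analytic in a neighborhood of $y_0$.

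The main technical obstacle is the absolute-continuity step: bounded boundary values of $G_{\nu_t}$ at each point of $(0,+\infty)$ immediately exclude atoms, but ruling out a singular continuous part requires a finer input, such as a de~la~Vallée-Poussin type theorem applied to the non-tangential boundary behavior of $G_{\nu_t}$. An alternative, more hands-on route is to observe that Theorem~\ref{thm:4.6}, combined with the homeomorphism property of $\Lambda_{t,\nu}$, already yields a continuous candidate density whose integral against Lebesgue measure can be shown to account for the full mass of $\nu_t$ on $(0,+\infty)$, forcing absolute continuity there directly.
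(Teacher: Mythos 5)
Your proposal is essentially correct, but its absolute-continuity step follows a genuinely different route than the paper's. The paper observes that $\eta_{\nu_t}$ is also subordinated with respect to $\eta_{\sigma_t}$ (via a second subordination function $\omega_t'$), and since $\eta_{\sigma_t}(\overline{\mathbb{C}^+})$ avoids $1$ (a fact checked in Example~\ref{example:4.10}), it follows at once that $\eta_{\nu_t}$ avoids $1$ on $\mathbb{C}^+\cup(\mathbb{R}\setminus\{0\})$; boundedness of $G_{\nu_t}$ via (\ref{eq:4.7}) and absence of a singular part on $(0,+\infty)$ then follow. Your argument instead extracts the same conclusion from (\ref{eq:2.7}) together with the nonvanishing and continuity of $\omega_t$ on $(0,+\infty)$. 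This works, but be careful: boundedness of $|z/\omega_t(z)|$ away from $0$ and $\infty$ controls only the \emph{real} part of the exponent; to bound the exponent itself you also need its imaginary part controlled, and that is supplied by the earlier proposition on continuous extension of $\eta_{\nu}$, whose proof shows $\Im\left(\frac{t}{2}\frac{\eta_{\nu_t}(z)+1}{\eta_{\nu_t}(z)-1}\right)\in[-\pi,0]$. With that in hand both routes land in the same place. The ``technical obstacle'' you flag is not one: the standard fact that the singular part of a measure on $\mathbb{R}$ lives on the set where $\Im G$ has infinite nontangential limit is exactly what is needed, and the paper uses it implicitly.

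Two smaller points. Your analyticity argument is fine and close in spirit to the paper's, though the paper gets $\omega_t(x)\in\mathbb{C}^+$ more directly: positivity of the density at $1/x$ gives $\eta_{\nu_t}(x)\in\mathbb{C}^+$ via (\ref{eq:4.7}), and then (\ref{eq:2.7}) forces $\arg(x/\omega_t(x))<0$, so $\omega_t(x)\in\mathbb{C}^+$; no appeal to Theorem~\ref{thm:4.6} is needed. More seriously, you dismiss the possible atom at $0$ with ``controlled by Lemma~\ref{lemma:100},'' which leaves a real gap. The paper closes it by noting that $\lim_{x\downarrow-\infty}H_{t,\nu}(x)=-\infty$ forces $\lim_{x\downarrow-\infty}\omega_t(x)=-\infty$, hence $\lim_{x\downarrow-\infty}x/\omega_t(x)=\Sigma_{\tau_t}(-\infty)=e^{t/2}$, and then (\ref{eq:2.7}) forces $\lim_{x\downarrow-\infty}\eta_{\nu_t}(x)=-\infty$; Lemma~\ref{lemma:100} then yields $\nu_t(\{0\})=0$. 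Your write-up should include an argument along these lines.
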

\begin{proof} 
We first note that $\eta_{\sigma_t}(\overline{\mathbb{C}^+})$ does not contain $1$ (see Example \ref{example:4.10}). 
There exists a subordination function $\omega_{t}':\mathbb{C}^+\rightarrow\mathbb{C}^+$
such that $\eta_{\nu_t}(z)=\eta_{\nu\boxtimes\sigma_t}(z)=\eta_{\sigma_t}(\omega_{t}'(z))$,
as we know in Section 2.2.
We thus deduce that $\eta_{\nu_t}(\mathbb{C}^+\backslash\{0\})$ does 
contain $1$ as well. From (\ref{eq:4.7}), we then see that the nontangential limit of $\g{\nu_t}$ is bounded at any 
$x\in (0,+\infty)$, which implies that $\nu_t$ has no singular part in $(0,+\infty)$. 

If the density of $\nu_t$ is positive at $1/x$, then from (\ref{eq:4.7}), we deduce that $\eta_{\nu_t}(x)\in \mathbb{C}^+$.
Thus the identity
  \begin{equation}\label{eq:4.901}
     \frac{z}{\omega_t(z)}=\exp\left(\frac{t}{2}\frac{\eta_{\nu_t}(z)+1}{\eta_{\nu_t}(z)-1} \right)
  \end{equation}
implies that $\omega_t(x)\in\mathbb{C}^+$, and then the analyticity of the density function follows from the fact that
$\omega_t$ can be extended analytically to a neighborhood of $x$.

It remains to show that $\nu_t(\{ 0\})=0$. Notice that $H_{t,\nu}(\omega_t(x))=x$ for $x\in(-\infty,0)$ and
$\lim_{x\downarrow -\infty}H_{t,\nu}(x)=-\infty$, we then have that $\lim_{x\downarrow -\infty}\omega_t(x)=-\infty$.
Thus we have that
      \begin{equation}\nonumber
         \lim_{x\downarrow -\infty}\frac{x}{\omega_t(x)}=\lim_{x\downarrow -\infty}\Sigma_{\tau_t}(\omega_t(x))
         =\exp\left(\frac{t}{2}\right),
      \end{equation}
which also implies that $\lim_{x\downarrow -\infty}\eta_{\nu_t}(x)=-\infty$
by (\ref{eq:4.901}).
From Lemma \ref{lemma:100}, we deduce that $\nu_t(\{0\})=0$.
\end{proof}

\begin{cor}
 If $\nu$ is compactly supported on $(0,+\infty)$, then the support of $\nu_t$ 
has only one connected component for large $t$.
\end{cor}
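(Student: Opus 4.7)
My plan is to reduce the claim to showing that the set $V_{t,\nu}$ is a single open interval for $t$ sufficiently large; the preceding corollary, which identifies the connected components of the support of $\nu_t$ with those of $\overline{V_{t,\nu}}$ via the homeomorphism $\Lambda_{t,\nu}$, then delivers the conclusion. Throughout, fix $0<a<b<+\infty$ with $\mathrm{supp}(\nu)\subset[a,b]$, and set $g(r) = \int_0^{+\infty} \frac{r\xi}{(1-r\xi)^2}\,d\nu(\xi)$, so that $V_{t,\nu}=\{r>0:g(r)>1/t\}$.

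The first step is to analyze $g$ outside the ``bad'' interval $[1/b,1/a]$. Differentiating under the integral gives $g'(r)=\int_0^{+\infty}\frac{\xi(1+r\xi)}{(1-r\xi)^3}\,d\nu(\xi)$, which is strictly positive on $(0,1/b)$ and strictly negative on $(1/a,+\infty)$. Elementary bounds $g(r)\le 4rb$ on $(0,1/(2b)]$ and $g(r)\le 4/(ra)$ on $[2/a,+\infty)$ give $g(r)\to 0$ as $r\to 0^+$ or $r\to+\infty$; in particular $V_{t,\nu}$ is bounded for every $t$.

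The key step is to prove the inclusion $[1/b,1/a]\subset V_{t,\nu}$ for $t$ large. The function $g$ is lower semicontinuous on $(0,+\infty)$ as a map into $[0,+\infty]$ (by Fatou's lemma applied to the positive integrand) and strictly positive there, since the integrand is strictly positive $\nu$-a.e. A lower semicontinuous $[0,+\infty]$-valued function attains its infimum on a compact set, so $m:=\inf_{[1/b,1/a]}g>0$, and $[1/b,1/a]\subset V_{t,\nu}$ whenever $t>1/m$.

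Finally, fix $t$ large enough that additionally $\{1/(2b),2/a\}\subset V_{t,\nu}$. Strict monotonicity and continuity of $g$ on $(0,1/b)$, together with $g(r)\to 0$ as $r\to 0^+$, force $V_{t,\nu}\cap(0,1/b)=(\alpha_t,1/b)$ for a unique $\alpha_t\in(0,1/(2b))$ with $g(\alpha_t)=1/t$; symmetrically $V_{t,\nu}\cap(1/a,+\infty)=(1/a,\beta_t)$ for a unique $\beta_t>2/a$. Combined with the inclusion $[1/b,1/a]\subset V_{t,\nu}$, this yields $V_{t,\nu}=(\alpha_t,\beta_t)$, a single open interval, as required. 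The main subtle point is the attainment-of-infimum argument, which must accommodate the possibility that $g$ takes the value $+\infty$ somewhere in $[1/b,1/a]$; lower semicontinuity in the extended sense is exactly what makes this step go through.
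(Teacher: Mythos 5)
Your proof is correct, and it takes a genuinely different (and more self-contained) route than the paper's. The paper fixes a reference time $t_0$, chooses $[a,b]\supset V_{t_0,\nu}$, notes that $g$ has a positive minimum on the compact set $[a,b]\setminus V_{t_0,\nu}$ so that $[a,b]\subset V_{t,\nu}$ for $t$ large, and then invokes the already-established monotonicity of the number of connected components of $V_{t,\nu}$ (which rests on the strict convexity in Proposition~4.4) to conclude. You avoid the component-count argument entirely: you split $(0,+\infty)$ at the interval $[1/b,1/a]$ determined by $\mathrm{supp}(\nu)\subset[a,b]$, show that this middle interval is absorbed into $V_{t,\nu}$ for large $t$ via lower semicontinuity and strict positivity of $g$ (the Fatou/attainment-of-infimum point is exactly the right care to take, since $g$ may be $+\infty$ there), and show the two wings contribute single intervals adjacent to $[1/b,1/a]$ by computing $g'$ and using strict monotonicity plus $g\to 0$ at the ends. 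The net effect is the same conclusion, but your argument is more explicit and does not lean on the earlier corollary's non-increasing component count; the paper's argument is shorter because it reuses that machinery. One small remark: the paper's choice of $[a,b]\supset V_{t_0,\nu}$ and your choice of $[1/b,1/a]$ with $\mathrm{supp}(\nu)\subset[a,b]$ are closely related through Lemma~4.3, which says $\mathrm{supp}(\nu)\subset\overline{V_{t,\nu}^{-1}}$, so the two decompositions target essentially the same set where $g$ is not a priori finite.
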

\begin{proof}
Fix $t=t_0$, then we choose a closed interval $[a,b]\subset (0,+\infty)$ containing 
$V_{t_0,\nu}$. The function $g(r)=\int_0^{\infty}\frac{r\xi}{(1-r\xi)^2}d\nu(\xi)$ has 
a positive minimum on the compact set $[a,b]\backslash V_{t_0,\nu}$.
The number of connected components of $V_{t,\nu}$ is a non-increasing function of $t$
and we have $V_{t_1,\nu}\subset V_{t_2,\nu}$ for any $t_1<t_2$. 
By the definition of $V_{t,\nu}$, we then derive that 
$V_{t,\nu}$ has only one connected component when $t$ is large enough.
Our assertion follows from
 the fact that 
       \begin{equation}\nonumber
         u_t(r)> 0\Leftrightarrow  r\in V_{t,\nu}
       \end{equation}
and Theorem \ref{thm:4.6}.
\end{proof}

Finally, we apply results in this section to give a description of the density of $\sigma_t$,
which was first obtained in \cite{BianeJFA}. See also 
\cite{Zhong1} for a different approach. We provide a picture
of the density function of $\sigma_t$ in the end of this paper.
\begin{example}\label{example:4.10}
\emph{Let $\nu=\delta_1$, we denote $ \Gamma_t=\Gamma_{t,\delta_1}, \Lambda_t=\Lambda_{t,\delta_1}$
and $V_t=V_{t,\delta_1}$, we also set $H_t(z)=H_{t,\delta_1}(z)=z\exp\left(\frac{t}{2}\frac{z+1}{z-1} \right)$. 
Then we have that $\nu_t=\sigma_t$ and 
     \begin{equation}\label{eq:4.125}
        V_t=\left\{0<r<+\infty: \frac{r}{(1-r)^2}>\frac{1}{t}\right\}=(x_1(t),x_2(t)),
     \end{equation}
where $x_1(t)=(2+t-\sqrt{t(t+4)})/2$ and $x_2(t)=(2+t+\sqrt{t(t+4)})/2$.
Since $1\in V_t$ and $u_t(r)>0$ for all $r\in V_t$, we see that $\eta_{\sigma_t}(\mathbb{C}^+)$
does not contain $1$ by applying Theorem \ref{thm:4.2}.}

\emph{
If $r\in V_t$, then $u_t(r)$ satisfies the equation
    \begin{equation}\label{eq:4.13}
      \frac{\sin(u_t(r))}{u_t(r)}\frac{r}{1+r^2-2r\cos(u_t(r))}=\frac{1}{t},
    \end{equation}
and we have
        \begin{equation}\label{eq:4.14}
          \frac{r\sin(u_t(r))}{1+r^2-2r\cos(u_t(r))}=\Im\left(\frac{1}{1-re^{iu_t(r)}} \right).
        \end{equation}
We also have 
      \begin{equation}\nonumber
        \Lambda_t(r)=r\exp\left(\frac{t}{2}\frac{r^2-1}{|1-re^{iu_t(r)}|^2} \right), 
      \end{equation}
and in particular, when $u_t(r)=0$, we have $\Lambda_t(r)=r\exp\left(\frac{t}{2}\frac{r+1}{r-1} \right)$.
We calculate
       \begin{equation}\nonumber
      \begin{split}
              x_3(t):=\frac{1}{\Lambda_t(x_2(t))}
           =\frac{2+t-\sqrt{t(t+4)}}{2}\exp\left(-\frac{\sqrt{t(t+4)}}{2}\right).
      \end{split}
    \end{equation}
and
        \begin{equation}\nonumber
      \begin{split}
              x_4(t):=\frac{1}{\Lambda_t(x_1(t))}
           =\frac{2+t+\sqrt{t(t+4)}}{2}\exp\left(\frac{\sqrt{t(t+4)}}{2}\right),
      \end{split}
    \end{equation}
where we used the fact that $u_t(x_1(t))=u_t(x_2(t))=0$.
We remark that $x_1(t)x_2(t)=x_3(t)x_4(t)=1$.
}
\end{example}
Note that the formulas we use are different from the formulas used in \cite{BianeJFA} by Biane, 
we then record our description 
of the density of $\sigma_t$ as follows.
\begin{prop}\label{prop:4.11}
The support of the measure $\sigma_t$ is equal to the interval $C_t=[x_3(t),x_4(t)]$. The density is positive
on the interior of the interval $C_t$, and is equal to, at the point $x\in C_t$, to 
$(1/(\pi x))\Im(l(t,x))$, where $l(t,x)$ is the unique solution of the equation
   \begin{equation}\nonumber
      \frac{z}{z-1}\exp\left(t\left(z-\frac{1}{2}\right) \right)=x
   \end{equation}
on $\Gamma_t$.
\end{prop}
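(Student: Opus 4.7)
The plan is to deduce both assertions from Theorem~\ref{thm:4.6} combined with the explicit formulas computed in Example~\ref{example:4.10}, via a Möbius substitution.

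First I would identify the support $C_t$. The corollary following Theorem~\ref{thm:4.6} gives $B_t^{-1} = \Lambda_t(\overline{V_t})$, and by \eqref{eq:4.125}, $\overline{V_t} = [x_1(t), x_2(t)]$. Since $\Lambda_t$ is a homeomorphism of $(0, +\infty)$ onto itself and $\Lambda_t(r) \to 0$ as $r \to 0^+$, it is strictly increasing. Combining this with the endpoint values $\Lambda_t(x_1(t)) = 1/x_4(t)$ and $\Lambda_t(x_2(t)) = 1/x_3(t)$ computed in Example~\ref{example:4.10} yields $B_t^{-1} = [1/x_4(t), 1/x_3(t)]$, hence $B_t = [x_3(t), x_4(t)] = C_t$.

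For the density, I would fix $x$ in the interior of $C_t$ and let $r \in V_t$ be the unique point with $\Lambda_t(r) = 1/x$. Theorem~\ref{thm:4.6} gives $q_t(x) = u_t(r)/(\pi t x)$. Set $\zeta := re^{iu_t(r)} \in \partial\Gamma_t \cap \mathbb{C}^+$ and define $l$ by the Möbius substitution $\zeta = (l-1)/l$, i.e., $l = 1/(1-\zeta)$. A direct computation using $(\zeta+1)/(\zeta-1) = 1 - 2l$ transforms the identity $H_t(\zeta) = 1/x$ into
\[
    \frac{l}{l-1}\exp\bigl(t(l - \tfrac12)\bigr) = x,
\]
so $l$ satisfies the stated equation. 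Combining \eqref{eq:4.13} and \eqref{eq:4.14} gives
\[
    \Im l \;=\; \frac{r\sin u_t(r)}{1 + r^2 - 2r\cos u_t(r)} \;=\; \frac{u_t(r)}{t},
\]
so $q_t(x) = (1/(\pi x))\Im l$ matches the claimed density with $l(t,x) := l$. Positivity on the interior of $C_t$ follows from $u_t(r) > 0$ for every $r \in V_t$.

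Uniqueness of $l(t,x)$ comes from the univalence of $H_t$ on $\Gamma_t$ (Proposition~\ref{prop:2.6}) together with the fact that the Möbius map $\zeta \mapsto 1/(1-\zeta)$ and the bijective parametrization of $\partial\Gamma_t \cap \mathbb{C}^+$ by $r \in V_t$ pin down $l$ from $x$. The main technical point is the careful interpretation of ``on $\Gamma_t$'' as designating the natural Möbius image of $\Gamma_t$; modulo this, the argument reduces to an algebraic manipulation of results already established.
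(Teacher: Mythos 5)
Your proposal is correct and follows essentially the same route as the paper: identify the support via $\Lambda_t(\overline{V_t}) = [1/x_4(t), 1/x_3(t)]$, and for the density combine Theorem~\ref{thm:4.6} with \eqref{eq:4.13}--\eqref{eq:4.14} and the Möbius change of variable $l = 1/(1-\zeta)$ applied to $H_t(\zeta) = 1/x$. The only difference is that you carry out the Möbius substitution and the uniqueness discussion explicitly where the paper leaves them as "follows from the definition of $H_t$."
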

\begin{proof}
 By the definition of $x_3(t), x_4(t)$ and Theorem \ref{thm:4.6}, we see that
 $C_t=[x_3(t),x_4(t)]$. 
 Let $r=\Lambda_t^{-1}(1/x)$, then by Theorem \ref{thm:4.6}, the density at the point $x$
 is equal to $(1/(\pi x))u_t(r)/t$. By (\ref{eq:4.13}) and (\ref{eq:4.14}), 
         \begin{equation}
           \frac{u_t(r)}{t}=\Im\left(\frac{1}{1-re^{iu_t(r)}} \right),
         \end{equation}
we then set $l(t,x)=1/(1-re^{iu_t(r)})$. Since $\Lambda_t(r)=1/x$, we then have that 
 $H_t(re^{iu_t(r)})=1/x$. The description of $l(t,x)$ follows from this identity and the definition of $H_t$.
\end{proof}

We now turn to discuss symmetry of the measure $\sigma_t$. 
\begin{prop}\label{prop:4.12}
For $\nu=\delta_1$, $u_t$ is a strictly increasing function of $r$ on the interval $(x_1(t),1]$,
$u_t$ is a strictly decreasing function of $r$ on the interval $[1,x_2(t))$, and $u_t(r)=0$ for
all $r\in (0,+\infty)\backslash (x_1(t),x_2(t))$. In particular, $u_t$ attains its global maximum at $1$.
\end{prop}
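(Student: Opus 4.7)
The plan is to combine the structural statement (\ref{eq:4.44})---namely $u_t(r)>0$ iff $r\in V_{t,\nu}$---with a direct implicit-function analysis of the defining equation (\ref{eq:4.13}) on the interval $V_t=(x_1(t),x_2(t))$. The identification $V_t=(x_1(t),x_2(t))$ in (\ref{eq:4.125}) together with (\ref{eq:4.44}) immediately gives $u_t(r)=0$ on $(0,+\infty)\setminus(x_1(t),x_2(t))$, so the only substantive claim is strict monotonicity of $u_t$ on the two half-intervals of $V_t$ cut at $r=1$.

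On $V_t$, I would apply the implicit function theorem to $f_t(r,u_t(r))=0$, where for $\nu=\delta_1$
\[
f_t(r,\theta)=1-\frac{t\sin\theta}{\theta}\cdot\frac{r}{1+r^2-2r\cos\theta}.
\]
Two sign computations do all the work. First, $\partial_\theta f_t(r,\theta)>0$ strictly on $(0,\pi)$: this is precisely the content of (the proof of) Lemma 4.1 applied to the single atom $\nu=\delta_1$, and it both legitimizes implicit differentiation and shows $u_t$ is real analytic on $V_t$. Second, a direct differentiation yields
\[
\partial_r f_t(r,\theta)=-\frac{t\sin\theta}{\theta}\cdot\frac{1-r^2}{(1+r^2-2r\cos\theta)^2},
\]
whose sign at $\theta=u_t(r)\in(0,\pi)$ is controlled entirely by $\mathrm{sgn}(r-1)$. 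Hence $u_t'(r)=-\partial_r f_t/\partial_\theta f_t$ is strictly positive on $(x_1(t),1)$, vanishes at $r=1$, and is strictly negative on $(1,x_2(t))$.

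To promote this to strict monotonicity on the closed half-intervals $(x_1(t),1]$ and $[1,x_2(t))$ and to a \emph{global} maximum at $r=1$, I would invoke continuity of $u_t$ on all of $(0,+\infty)$ (established earlier in the proof of the homeomorphism statement) together with the boundary values $u_t(x_1(t)^+)=u_t(x_2(t)^-)=0$. Strict sign of $u_t'$ on the two open pieces, combined with continuity at the endpoint $r=1$, upgrades the inequalities to the closed-interval form, and the vanishing of $u_t$ outside $V_t$ ensures the maximum at $r=1$ is global, not merely local.

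The only place that requires any care is the strict positivity of $\partial_\theta f_t$ along the solution curve $\theta=u_t(r)$, which is the sole input needed to run the implicit-function computation; this is not deep, but it is the step one must not skip, since it comes down to reusing both sign observations from the proof of Lemma 4.1 (monotonicity of $\sin\theta/\theta$ and of the integrand in $\theta$). Everything else---including the symmetry $u_t(1/r)=u_t(r)$ visible from the form of (\ref{eq:4.13})---is an easy bonus that reconfirms the maximum at $r=1$.
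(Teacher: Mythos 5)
Your proposal is correct and takes essentially the same approach as the paper. The paper sets $g(r,\theta)=\frac{\sin\theta}{\theta}\cdot\frac{r}{1+r^2-2r\cos\theta}$ (so $f_t=1-t\,g$) and records exactly the same three sign facts ($\partial_r g>0$ for $r<1$, $\partial_r g<0$ for $r>1$, and $\partial_\theta g<0$), then reads off strict monotonicity of $u_t$ directly from the implicit relation $g(r,u_t(r))=1/t$ on $V_t$, rather than packaging the same information as an implicit-function-theorem computation of $u_t'$ as you do; the underlying content is identical.
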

\begin{proof}
Let $g$ be the function defined by 
     \begin{equation}\nonumber
        g(r,\theta)=\frac{\sin\theta}{\theta}\frac{r}{1+r^2-2r\cos\theta}
     \end{equation}
on the set $(0,+\infty)\times [0,\pi)$. The first part of the assertion follows from (\ref{eq:4.13}) and
the following fact: 
      \begin{enumerate}[(i)]
        \item $\frac{\partial g(r,\theta)}{\partial r}<0$, for $(r,\theta)\in (1,+\infty)\times [0,\pi)$.
        \item $\frac{\partial g(r,\theta)}{\partial r}>0$, for $(r,\theta)\in (0,1)\times [0,\pi)$.
        \item $\frac{\partial g(r,\theta)}{\partial \theta}<0$, for $(r,\theta)\in (0,+\infty)\times (0,\pi)$. 
      \end{enumerate}
The rest part follows from (\ref{eq:4.125}).
\end{proof}

\begin{lemma}\label{lemma:4.13}
For $\nu=\delta_1$, we have 
     \begin{equation}\nonumber
        u_t(r)=u_t\left( \frac{1}{r}\right), \text{and}\, \Lambda_t(r)\cdot\Lambda_t\left(\frac{1}{r}\right)=1,
     \end{equation}
holds for all $r>0$.
\end{lemma}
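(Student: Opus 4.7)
The plan is to show that both identities follow from the invariance of the defining relation for $u_t(r)$ under the involution $r \leftrightarrow 1/r$, together with the uniqueness of $u_t$ and a direct computation for $\Lambda_t$.

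First I would establish $u_t(r) = u_t(1/r)$. The case $r \notin V_t$ is immediate: by Example \ref{example:4.10} we have $V_t = (x_1(t), x_2(t))$ with $x_1(t) x_2(t) = 1$, so $r \notin V_t$ if and only if $1/r \notin V_t$, and then $u_t(r) = u_t(1/r) = 0$ by the definition of $u_t$. For $r \in V_t$, observe that $1/r$ also lies in $V_t$, and substituting $r \mapsto 1/r$ in the defining equation (\ref{eq:4.13}) gives
\begin{equation*}
\frac{\sin(u_t(1/r))}{u_t(1/r)} \cdot \frac{1/r}{1 + 1/r^2 - 2(1/r)\cos(u_t(1/r))} = \frac{1}{t}.
\end{equation*}
Multiplying numerator and denominator of the second factor by $r^2$ turns it into $\frac{r}{1 + r^2 - 2r\cos(u_t(1/r))}$. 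Thus $u_t(1/r)$ satisfies the same equation that uniquely determines $u_t(r)$; the uniqueness comes from the strict monotonicity of $f_t(r,\cdot)$ on $(0,\pi)$ (used in the proof of Theorem \ref{thm:4.2}), which guarantees that the value of $\theta \in (0,\pi)$ solving (\ref{eq:4.13}) is unique. Hence $u_t(1/r) = u_t(r)$.

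With this identity in hand, the second claim is a direct computation. Using $u_t(1/r) = u_t(r)$, I compute
\begin{equation*}
\left|1 - (1/r)e^{iu_t(r)}\right|^2 = \frac{1 + r^2 - 2r\cos(u_t(r))}{r^2} = \frac{|1 - r e^{iu_t(r)}|^2}{r^2}.
\end{equation*}
Substituting this into the formula for $\Lambda_t$ from Example \ref{example:4.10} yields
\begin{equation*}
\Lambda_t(1/r) = \frac{1}{r}\exp\!\left(\frac{t}{2} \cdot \frac{1 - r^2}{|1 - re^{iu_t(r)}|^2}\right),
\end{equation*}
whose exponent is the negative of that of $\Lambda_t(r)$. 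Multiplying the two expressions gives $\Lambda_t(r) \Lambda_t(1/r) = 1$.

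The main conceptual point — really the only nontrivial step — is the invocation of uniqueness in the first part; everything else is bookkeeping. I do not anticipate any obstacle, since the monotonicity required for uniqueness was already established and the algebraic simplifications are routine.
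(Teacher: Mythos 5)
Your proof is correct and follows the same basic strategy as the paper: exploit the invariance of the defining relation \refto{4.13} under $r\leftrightarrow 1/r$ to get $u_t(r)=u_t(1/r)$, then compute $\Lambda_t(r)\Lambda_t(1/r)$ directly. The paper reaches $u_t(r)=u_t(1/r)$ by rewriting \refto{4.13} as the quadratic \refto{4.17} in $r$ (with product of roots $1$) and invoking Proposition~\ref{prop:4.12}, whereas you substitute $r\mapsto 1/r$ directly and appeal to the uniqueness coming from the monotonicity of $\theta\mapsto f_t(r,\theta)$; these are two phrasings of the same observation, and yours is arguably tighter since it does not need the global monotonicity structure of $u_t$ from Proposition~\ref{prop:4.12}, only the elementary uniqueness from the lemma preceding Theorem~\ref{thm:4.2}. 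For the $\Lambda_t$ identity, the paper eliminates the denominator via \refto{4.13} and reduces to the algebraic identity \refto{4.20}, while you instead use $|1-(1/r)e^{i\theta}|^2 = r^{-2}|1-re^{i\theta}|^2$ to make the exponents cancel outright — a noticeably cleaner route to the same conclusion.
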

\begin{proof}
We first note that $r\in V_t=(x_1(t),x_2(t))$ if and only if $1/r\in V_t$.
If $r\notin V_t$, then $u_t(r)=0$, and we have
     \begin{equation}\nonumber
       \Lambda_t(r)=r\exp\left(\frac{t}{2}\frac{r+1}{r-1} \right)\, \text{and}\,
         \Lambda_t\left( \frac{1}{r}\right) =\frac{1}{r}\exp\left(\frac{t}{2}\frac{r+1}{1-r} \right),
     \end{equation}
which prove the case when $r\notin V_t$.
     
If $r\in V_t$, we first prove that $u_t(r)=u_t(1/r)$. Recall from (\ref{eq:4.13}) that the
pair $(r,u_t(r))$ satisfies the equation
     \begin{equation}\nonumber
        \frac{\sin\theta}{\theta}\frac{r}{1+r^2-2r\cos\theta}=\frac{1}{t}
     \end{equation}
which is equivalent to 
     \begin{equation}\label{eq:4.17}
        r^2-\left(2\cos\theta+t\frac{\sin\theta}{\theta} \right)r+1=0.
     \end{equation}
For any $\theta$, (\ref{eq:4.17}) is a quadratic equation of $r$ and the product of 
its solutions is $1$. This observation and Proposition \ref{prop:4.12} imply that
$u_t(r)=u_t(1/r)$. For any $\theta\in (0,u_t(1)]$, let $r_1, r_2$ be the solutions
of (\ref{eq:4.17}), then $r_1, r_2\in (0,+\infty)$ and $r_1\cdot r_2=1$. We claim
that $\Lambda_t(r_1)\cdot\Lambda_t(r_2)=1$. If fact, $\theta=u_t(r_1)=u_t(r_2)$, and
   \begin{equation}\label{eq:4.19}
      \ln\left [\Lambda_t(r_i) \right]=\ln r_i+\frac{t}{2}\frac{r_i^2-1}{1+r_i^2-2r_i\cos\theta},\, \text{for}\, i=1,2.
   \end{equation}
By replacing 
     \begin{equation}\nonumber
        1+r_i^2-2r_i\cos\theta=\frac{tr_i\sin\theta}{\theta}
     \end{equation}
into (\ref{eq:4.19}), to prove our claim, it is enough to prove that
    \begin{equation}\label{eq:4.20}
      r_2r_1^2-r_2+r_1r_2^2-r_1=0.
    \end{equation}
Since $r_1r_2=1$, (\ref{eq:4.20}) is true and this finishes the proof.
\end{proof}
\begin{prop}\label{prop:4.14}
For $t>0$, let $s_t(x)$ be the density of the measure $\sigma_t$ at $x\in(0,+\infty)$.
\begin{enumerate}[$(1)$]
  \item The support of the function $s_t$ is the interval $[x_3(t),x_4(t)]$.
  For any $x\in(0,+\infty)$, we have that 
     \begin{equation}\nonumber
       s_t(x)x=s_t\left(\frac{1}{x}\right)\frac{1}{x}. 
     \end{equation}
     Moreover, the function $x\rightarrow s_t(x)x$ is strictly increasing on the interval $(x_3(t),1]$.
  \item The measure $\sigma_t$ is invariant under the map of $\mathbb{R}:x\rightarrow 1/x$. In other words,
  for any interval $[\alpha,\beta]\subset (0,1]$, we have that
      \begin{equation}\nonumber
        \sigma_t([\alpha,\beta])=\sigma_t\left(\left[\frac{1}{\beta}, \frac{1}{\alpha} \right] \right).
      \end{equation}
  \item The function $s_t$ is a strictly decreasing function of $x$ on the interval $[1,x_4(t))$.
\end{enumerate} 
\end{prop}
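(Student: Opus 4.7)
The plan is to deduce all three parts from the density formula in Theorem \ref{thm:4.6}, which for $\nu = \delta_1$ reads $s_t(1/\Lambda_t(r)) = \Lambda_t(r)\, u_t(r)/(\pi t)$. Since $\Lambda_t$ is a homeomorphism of $(0,+\infty)$ onto itself, every $x>0$ can be written uniquely as $x = 1/\Lambda_t(r)$, so the formula takes the compact form $x\, s_t(x) = u_t(r)/(\pi t)$. The support statement in (1) is then immediate from Proposition \ref{prop:4.11}. For the symmetry $x\,s_t(x) = (1/x)\,s_t(1/x)$, I will apply Lemma \ref{lemma:4.13}: since $\Lambda_t(r)\Lambda_t(1/r)=1$, we have $1/x = \Lambda_t(r) = 1/\Lambda_t(1/r)$, so $(1/x)\,s_t(1/x) = u_t(1/r)/(\pi t)$, and the identity $u_t(r) = u_t(1/r)$ from the same lemma closes the loop. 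Part (2) then follows at once by rewriting the symmetry as $s_t(x) = s_t(1/x)/x^2$ and substituting $y = 1/x$ in $\int_\alpha^\beta s_t(x)\,dx$.

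For the monotonicity claims in (1) and (3), the first step is to verify that $\Lambda_t$ is strictly increasing on $(0,+\infty)$. Being a continuous bijection of an interval onto itself, $\Lambda_t$ is automatically strictly monotone; I will fix the direction by working on the branch $(0, x_1(t))$, where $u_t \equiv 0$ and $\Lambda_t(r) = r\exp((t/2)(r+1)/(r-1))$ has logarithmic derivative $1/r - t/(r-1)^2$, which is positive precisely when $r^2 - (2+t)r + 1 > 0$ and hence throughout $(0, x_1(t))$. With this in hand the monotonicity in (1) is direct: as $x$ ascends from $x_3(t)$ to $1$, the preimage $r = \Lambda_t^{-1}(1/x)$ descends from $x_2(t)$ to $1$, and by the strict decrease of $u_t$ on $[1, x_2(t))$ from Proposition \ref{prop:4.12}, $u_t(r)$ is a strictly increasing function of $x$; so $x\,s_t(x) = u_t(r)/(\pi t)$ is strictly increasing on $(x_3(t), 1]$.

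The same strategy produces (3). For $x \in [1, x_4(t))$ one has $r = \Lambda_t^{-1}(1/x) \in (x_1(t), 1]$, and $r$ descends from $1$ to $x_1(t)$ as $x$ ascends, so by the strict increase of $u_t$ on $(x_1(t), 1]$ from Proposition \ref{prop:4.12}, $u_t(r)$ is a strictly decreasing function of $x$; combined with the strict decrease of $1/x$, the product $s_t(x) = u_t(r)/(\pi t x)$ is a product of two positive strictly decreasing functions of $x$ and is therefore strictly decreasing on $[1, x_4(t))$. The only mildly non-routine step in the whole argument is pinning down the global monotonicity and direction of $\Lambda_t$; everything else is a mechanical combination of the symmetries in Lemma \ref{lemma:4.13} and the monotonicity in Proposition \ref{prop:4.12} with the density formula of Theorem \ref{thm:4.6}.
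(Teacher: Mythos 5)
Your argument is correct and follows essentially the same route as the paper: combine the density formula $x\,s_t(x)=u_t(r)/(\pi t)$ for $x=1/\Lambda_t(r)$ (Theorem \ref{thm:4.6}) with the symmetries $u_t(r)=u_t(1/r)$, $\Lambda_t(r)\Lambda_t(1/r)=1$ (Lemma \ref{lemma:4.13}) and the monotonicity of $u_t$ (Proposition \ref{prop:4.12}). The one place you go beyond the paper's exposition is your explicit verification, via the logarithmic derivative of $\Lambda_t$ on $(0,x_1(t))$, that $\Lambda_t$ is strictly \emph{increasing}; the paper asserts this without proof, and your check via $r^2-(2+t)r+1>0$ on $(0,x_1(t))$ is a clean way to fix the direction of the monotone homeomorphism.
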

\begin{proof}
From Theorem \ref{thm:4.6} and Lemma \ref{lemma:4.13}, we have that 
       \begin{equation}\label{eq:4.23}
         s_t\left(\frac{1}{\Lambda_t(r)} \right)=\Lambda_t(r)\frac{1}{\pi}\frac{u_t(r)}{t},
       \end{equation}
and that 
       \begin{equation}\label{eq:4.25}
         \begin{split}
            s_t(\Lambda_t(r))&=s_t\left(\frac{1}{\Lambda_t\left(\frac{1}{r} \right)} \right)\\
                   &=\frac{1}{\pi}\Lambda_t\left(\frac{1}{r} \right)\frac{u_t\left(\frac{1}{r}\right)}{r}\\
                   &=\frac{1}{\pi}\frac{1}{\Lambda_t(r)}\frac{u_t(r)}{t}.
         \end{split}
       \end{equation}
Note that $\Lambda_t:(0,+\infty)\rightarrow (0,+\infty)$ is a homeomorphism and $\Lambda_t$ is increasing. We also have 
that $\Lambda_t(V_t)=\Lambda_t((x_1(t),x_2(t)))=(x_3(t),x_4(t))$, then (1) follows from Proposition \ref{prop:4.12},
Lemma \ref{lemma:4.13} and 
(\ref{eq:4.25}). Part (2) 
is a consequence of (1). Since $u_t$ is an increasing function of $r$ on the interval $(x_1(t),1]$ 
by Proposition \ref{prop:4.12}, then $\Lambda_t u_t$ is also an increasing function of $r$ on $(x_1(t),1]$ and
Part (3) follows from this fact and (\ref{eq:4.23}).
\end{proof}
\begin{rmk}
\emph{Part (2) of Proposition \ref{prop:4.14} is not unexpected. In fact,
for any integer $n$, let $a_n=1+\sqrt{2t/n}$, $b_n=1/a_n$ and $\mu_n=(\delta_{a_n}+\delta_{b_n})/2$,
then from \cite[Lemma 7.1]{BV1992}, we see that $\mu_n^{\boxtimes n} \rightarrow \sigma_t$ weakly as 
$n\rightarrow +\infty$. 
}
\end{rmk}
\begin{rmk}\label{rmk:order2}
\emph{
 By a similar calculation in Remark \ref{rmk:order1}, we can see that there exist $c(t)>0, d(t)>0$ such that
     \begin{equation}\label{eq:200}
        s_t(x)=c(t)|x-x_3(t)|^{\frac{1}{2}}(1+o(1))
     \end{equation}
in a small interval $[x_3(t),x_3(t)+\delta)$ for some $\delta>0$, and that
    \begin{equation}\label{eq:202}
        s_t(x)=d(t)|x-x_4(t)|^{\frac{1}{2}}(1+o(1))
    \end{equation}
in a small interval $(x_4(t)-\delta',x_4(t)]$ for some $\delta'>0$. 
The orders in (\ref{eq:200}) and (\ref{eq:202}) are essentially due to the fact that
$H_t$ has zeros of order one at $x_1(t)$ and $x_2(t)$.
}
\end{rmk}

   \begin{figure}[H]
     \includegraphics[scale=0.9]{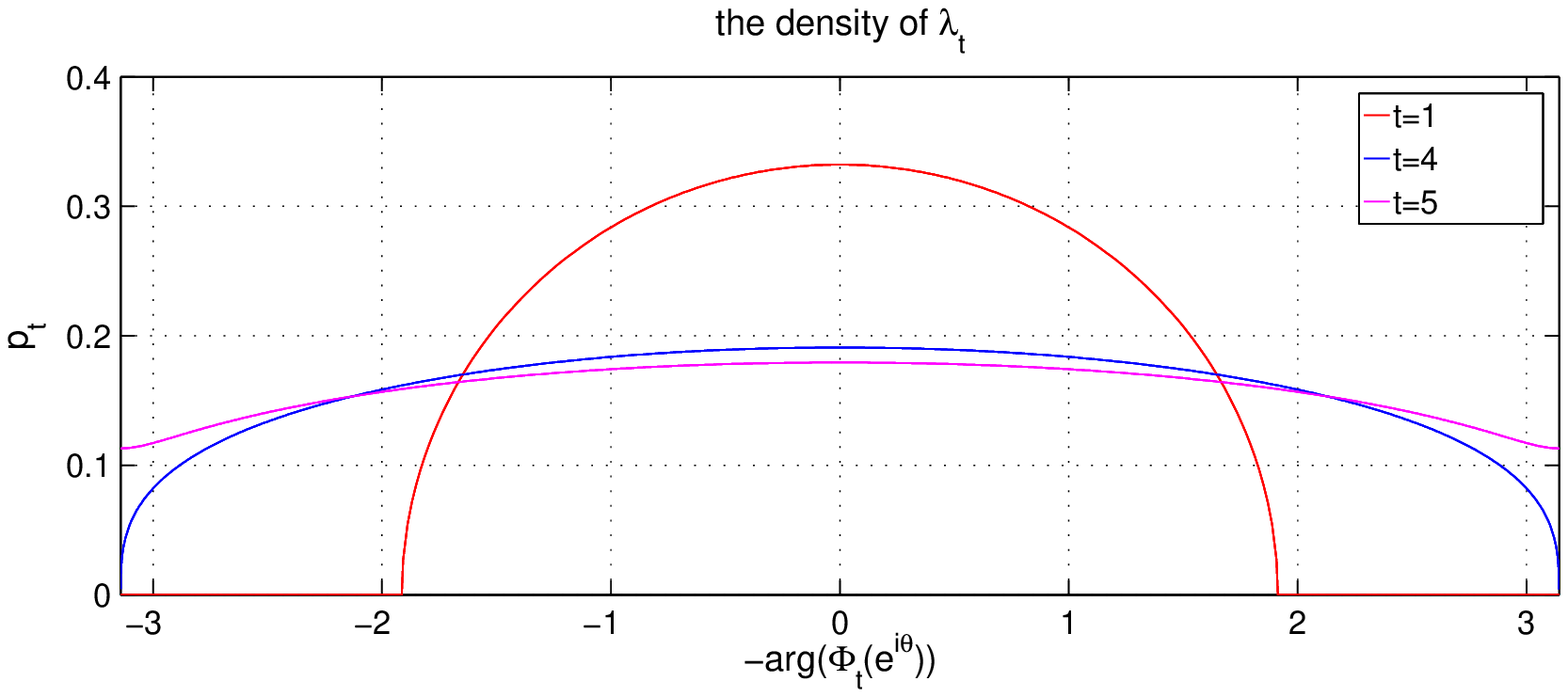}
    \vspace{0pt}
    \label{fig1}
   \end{figure}
   
      \begin{figure}[H]
     \includegraphics[scale=1.2]{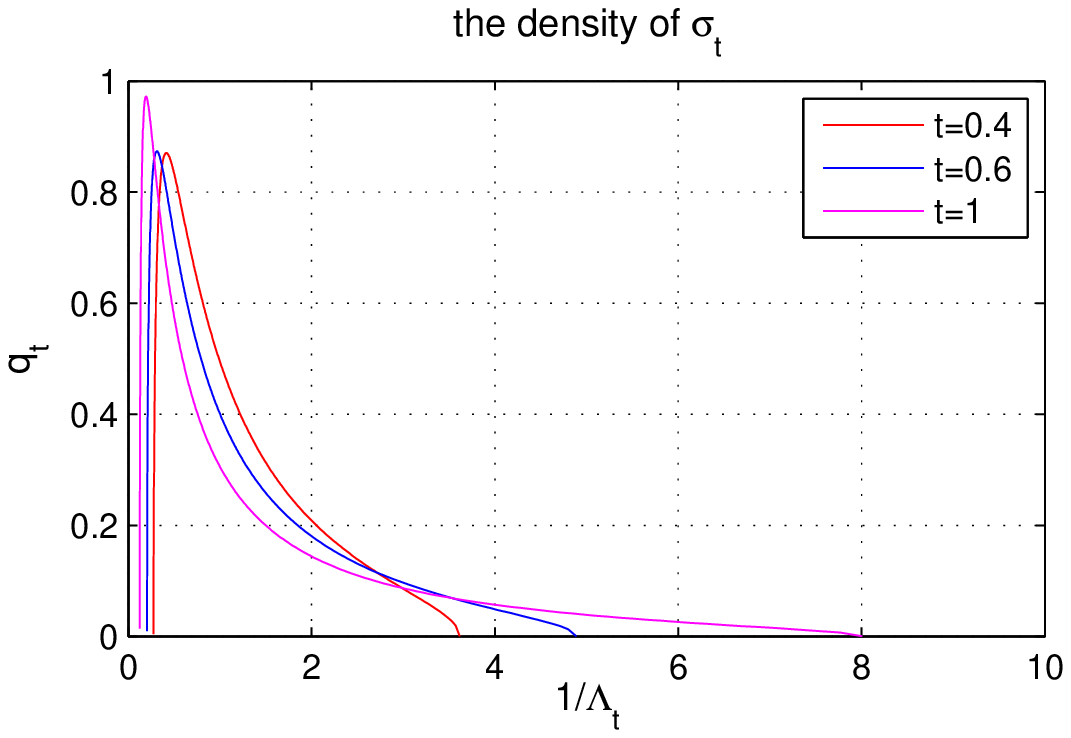}
    \vspace{0pt}
    \label{fig2}
   \end{figure}
\section*{Acknowledgements}
The author would like to express his sincere gratitude to his advisor, 
Professor Hari Bercovici, for constant help and many valuable discussions.
He also wants to thank Professors Michael Anshelevich, Todd Kemp for comments 
and Honghu Liu for his help with Matlab. He is grateful to a referee for a
very careful reading and lots of suggestions.

\bibliography{clipboard20140314}
\bibliographystyle{amsplain}

\end{document}